\newcounter{pcounter}
\newcommand{\PP}{\Bbb P}
\newcommand{\ZZ}{\Bbb Z}
\newcommand{\RR}{\Bbb R}
\newcommand{\TT}{\Bbb T}
\newcommand{\NN}{\Bbb N}
\newcommand{\CC}{\Bbb C}
\newcommand{\ip}[1]{\langle #1 \rangle}
\newcommand{\widetidle}{\widetilde}
\newcommand{\varespilon}{\varepsilon}
\newcommand{\actson}{\curvearrowright}
\newtheorem{?}{Question}
\newtheorem{theorem}{Theorem}
\newtheorem{definition}[theorem]{Definition}
\newtheorem{proposition}[theorem]{Proposition}
\newtheorem{cor}[theorem]{Corollary}
\newtheorem{lemma}[theorem]{Lemma}
\newcommand{\FF}{\Bbb F}
\DeclareMathOperator{\BC}{BC}
\DeclareMathOperator{\vol}{vol}
\DeclareMathOperator{\vr}{vr}
\DeclareMathOperator{\Sym}{Sym}
\DeclareMathOperator{\Span}{Span}
\DeclareMathOperator{\ord}{ord}
\DeclareMathOperator{\Map}{Map}
\DeclareMathOperator{\im}{im}
\DeclareMathOperator{\rank}{Rank}
\DeclareMathOperator{\tr}{tr}
\DeclareMathOperator{\supp}{supp}
\DeclareMathOperator{\proj}{proj}
\DeclareMathOperator{\micr}{micr}
\DeclareMathOperator{\tmicr}{tmicr}
\DeclareMathOperator{\Hom}{Hom}
\DeclareMathOperator{\Tr}{Tr}
\DeclareMathOperator{\mdim}{mdim}
\DeclareMathOperator{\wdim}{wdim}
\DeclareMathOperator{\Ball}{Ball}
\numberwithin{theorem}{section}
\begin{document}
\title{ Metric Mean Dimension For Algebraic Actions of Sofic Groups}
\author{Ben Hayes}\thanks{University of California, Los Angeles
520 Portola Plaza
Los Angeles,CA 90095-155.\\ e-mail:brh6@ucla.edu. The author is grateful for support from NSF Grants DMS-1161411 and DMS-0900776.}

\date{\today}
\maketitle
\begin{abstract} We prove that if $\Gamma$ is a sofic group, and $A$ is a finitely generated $\ZZ(\Gamma)$-module, then the metric mean dimension of $\Gamma \actson \widehat{A},$ in the sense of Hanfeng Li is equal to the von Neumann-L\"{u}ck rank of $A.$ This partially extends  the results of Hanfeng Li and Bingbing Liang in \cite{LiLiang} from the case of amenable groups to the case of sofic groups. Additionally we show that the mean dimension of $\Gamma\actson \widehat{A}$ is the von Neumann-L\"{u}ck rank of $A,$ if $A$ is finitely presented and $\Gamma$ is residually finite. It turns out that our approach naturally leads to a notion of $p$-metric mean dimension, which is in between mean dimension and the usual metric mean dimension. This can be seen as an obstruction to the equality of mean dimension and metric mean dimension. While we cannot decide  if mean dimension is the same as metric mean dimension for algebraic actions, we show that  in the metric case that for all $p$ the $p$-metric mean dimension coincides with the von Neumann-L\"{u}ck rank of  the dual module. \end{abstract}

%

\tableofcontents
\section{Introduction}

Mean dimension was first introduced for amenable groups by Gromov in \cite{Gromov} and studied systematically by Lindenstrauss and Weiss in \cite{LindWeiss}. It  is a dynamical version of dimension, and can be thought of as topological entropy for actions on ``large" topological spaces.  Lindenstrauss and Weiss showed that if $\Gamma$ is amenable,  and if $\mdim(X,\Gamma)>0$ then the entropy of $\Gamma \actson X$ is infinite.  Additionally they showed that when $\Gamma$ is amenable and $Y$ is a compact topological manifold, then $\mdim(Y^{\Gamma},\Gamma)=\dim(Y).$ Further, Lindenstrauss and Weiss defined a metric version of mean dimension, called metric mean dimension. The metric mean dimension is defined by considering dynamical versions of packing numbers with respect to metrics on $X,$ and then taking the infimum over all such metrics. Recently, mean dimension and metric mean dimension have been extended to actions of  sofic groups by Hanfeng Li in \cite{Li}. Additionally, Li extended Lindenstrauss and Weiss' result on Bernoulli shifts to the context of all sofic groups.

	A particularly nice class of examples of actions of general groups are the \emph{algebraic} actions. Let $\Gamma$ be a countable discrete group, and let $A$ be a countable abelian group on which $\Gamma$ acts by automorphisms. Let $\widehat{A}$ be the group of  homomorphisms into $\TT=\RR/\ZZ.$ Then $\widehat{A}$ is a compact abelian group  when given the topology of pointwise convergence. We have an action $\Gamma\actson \widehat{A}$ given by
\[(g\chi)(a)=\chi(g^{-1}a),\]
 and  $\widehat{A}$ is a compact metrizable space, we call this an \emph{algebraic action} of $\Gamma.$ Further, the action clearly preserves the Haar measure on $\widehat{A}.$   Often one can describe the purely dynamical notions of $\Gamma\actson \widehat{A}$ (i.e. the ones that view $\widehat{A}$ as a compact metrizable space or measure space, and ignore the algebraic structure) in terms of the action of $\Gamma$ on $A.$ For example, one can completely describe ergodicity, mixing, and expansiveness for $\Gamma\actson \widehat{A},$ in terms of $\Gamma \actson A.$ Additionally, the numerical invariants of $\Gamma\actson \widehat{A},$ can often be described in terms of the $L^{2}$-invariants of $\Gamma \actson A,$ such as measure theoretic and topological entropy, and this has been extensively studied for amenable groups (see \cite{LiThom},\cite{LiLiang}, \cite{ChungThom},\cite{LindSchmidt1},\cite{LindSchmidt2},\cite{Schmidt},\cite{Den},\cite{DenSchmidt}). Part of the above work has been extended from the amenable case to the sofic case, including the computation of entropy of principal algebraic actions of residually finite groups   by Bowen (see Theorem 1.2 in \cite{BowenEntropy}) Kerr-Li (see Theorem 7.1 in \cite{KLi}), and Bowen-Li (see \cite{BowenLi} Theorem 1.3).

	As an example of the above phenomenon, Hanfeng Li and Bingbing Liang showed that if $\Gamma$ is amenable, then $\mdim(\widehat{A},\Gamma)=\vr(A),$ where $\vr(A)$ is the von Neumann--L\"{u}ck rank of $A$ as a $\ZZ(\Gamma)$-module (see Section \ref{S:prelim} for the definition). In this paper, we  show if $A$ is a finitely generated $\ZZ(\Gamma)$-module, that the \emph{metric} mean dimension of $\Gamma \actson \widehat{A}$ is equal to $\vr(A),$ when $\Gamma$ is \emph{sofic}. Additionally, we will show that when $A$ is a finitely presented $\ZZ(\Gamma)$-module, $\Gamma$ is residually finite, and the sofic approximation comes from a sequence of finite quotients, then the mean dimension of $\Gamma\actson \widehat{A}$ is equal to the von Neumann--L\"{u}ck rank of $A.$ This recovers the result of Li-Liang in the finitely presented amenable case.

	There is a fair amount of difficulty in showing the mean dimension of $\Gamma \actson \widehat{A}$ is $\vr(A),$ which will be explained later in the paper. Our techniques do not entirely recover the results of Li-Liang for amenable groups, and it is not clear  how one would fix this. However, we now know that $\vr(A)$ is an invariant of $\Gamma \actson \widehat{A}$ as a \emph{topological dynamical system} (ignoring the algebraic structure of $\widehat{A}$) for sofic groups, whereas the results of Li-Liang only show that for amenable groups. Finally, we shall present $\ell^{p}$-versions of metric mean dimension, and show that they are all bounded below by the usual mean dimension. There is an obvious chain of inequalities between these mean dimensions.  As discussed in the section on questions and conjectures, if one can prove that $p$-metric mean dimension and $q$-metric mean dimension are different for some values of $p,q$ this would prove that metric mean dimension is not the same as mean dimension (which is unknown in general). Additionally, if one can prove that $p$-metric mean dimension always equals $q$-metric mean dimension, then this gives added flexibility in dealing with metric mean dimension. Thus, it is an interesting question to decide if these different versions of metric mean dimension are equal or not, independent of the outcome.  Additionally, we shall give an alternate formula for amenable groups which shows that we can compute $p$-metric mean dimension in a manner similar to the usual formula for entropy for amenable groups.

	Some of the techniques for this paper are already inherent in \cite{Me},\cite{Me2} and \cite{Li}. In particular, the idea of considering microstates on a quotient to just be microstates on the larger space which are small on the kernel is crucial in \cite{Me},\cite{Me2}, and is an important conceptual step in our proof of the main theorem. Similarly, we will use that $\vol(\Ball(l^{p}(n,\mu_{n})))^{1/n}$  (here $\mu_{n}$ is the uniform probability measure) is bounded to prove that $p$-metric mean dimension is bounded below by mean dimension.  Consequences of the control on $\vol(\Ball(l^{p}(n,\mu_{n}))^{1/n}$ are already present in the proof of Theorem 4.7 in \cite{Me}, as well as Theorem 6.2 in \cite{Me2}. Lastly the notion of ``microstates rank" is an alternate version of the entropic formula for von Neumann dimension developed in Theorem 6.6 in \cite{Me}, and Theorem 6.4 in \cite{Me2}. Additionally, certain portions of the paper are adapting the techniques in \cite{Li} (some of which can be traced back to \cite{LindWeiss}) particularly in subsection \ref{SS:meandimension}. Structurally,  the paper follows \cite{LiLiang}, in particular the proof of the main theorem follows a similar format to \cite{LiLiang} and this is intentional. However, as is typical in passing from the amenable entropy to sofic entropy, the techniques in this paper are often quite different from those in \cite{LiLiang}. For example, a crucial step in the proof of the main theorem in \cite{LiLiang} is proving that
\[\vr(A)=\sup_{E\subseteq A\mbox{ finite }}\lim_{n\to \infty}\frac{\rank{\ip{F_{n}E}}}{|F_{n}|},\]
where $F_{n}$ is a F\o lner sequence,  $\ip{F_{n}E}$ is the group generated by $F_{n}E,$ and $\rank(\cdot)$ is the usual rank of a finitely generated abelian group. Taking limits of precise algebra will not work in our context, and instead we consider exponential growth rates of $\varepsilon$-packing numbers of certain subsets of $\Hom(A,\ell^{2}(n)).$ As $\dim_{\CC}(\Hom(A,\ell^{2}(n)))=n\rank(A),$ this can be seen as replacing rank in the above formula with an $\varepsilon$-rank. This appears to be necessary in the nonamenable context, and is an important philosophy behind the techniques in \cite{Me},\cite{Me5}. Lastly, it should be mentioned that the recent work on sofic entropy and related areas originates in \cite{Bow} and \cite{KLi}.

\textbf{Acknowledgments} I would like to thank Dimitri Shlyakhtenko for his continuing support and helpful suggestions on this problem. I would like to thank Hanfeng Li and Lewis Bowen for comments on an earlier version of this paper, which improved the paper greatly. I would additionally like to thank the anonymous referee for their numerous comments, which greatly improved the paper. Much of this paper developed from discussions at the  Von Neumann Algebras and Measurable Group Theory conference in July 2014 in Leuven, the Dynamics, Geometry and Operator Algebras conference in Texas A\&M in August 2014,  and the  $C^{*}$-Algebren conference in Oberwolfach in  August 2014.

\subsection{Notation and Terminology}

	The identity element of a group $\Gamma$ will be denoted $e,$ unless $\Gamma$ is assumed to be abelian in which case we will use $0$ for the identity element. If $A$ is an abelian group, we will write its group operation additively unless otherwise stated. In particular, we will use $\TT$ for the additive group $\RR/\ZZ$ and not the multiplicative group of modulus one complex numbers, and similarly for $\RR^{n}/\ZZ^{n}.$

	We will assume some familiarity with measurable functional calculus for normal elements in $B(\mathcal{H}).$ If $T\in B(\mathcal{H})$ is normal, and $f$ is a  bounded real-valued Borel function on the spectrum of $T,$ we will use $f(T)$ for the operator obtained by integrating $f$ against the spectral measure of $T.$ For the reader's convenience we note the most often used example: if $\mathcal{H}$ is finite dimensional and $A\subseteq \CC$ is Borel, then $\chi_{A}(T)$ is the orthogonal projection onto the eigenspaces whose corresponding eigenvalues are in $A.$ Throughout the paper we will use notation analogous to measure theory for tracial von Neumann algebras (defined in section \ref{S:prelim}). For example if $(M,\tau)$ is a tracial von Neumann algebra, the operator norm of an element $x\in M,$ will often be denoted $\|x\|_{\infty},$ and we  will use $\|x\|_{p}^{p}=\tau(|x|^{p}),$ where $|x|=(x^{*}x)^{1/2}.$

	A pseudometric $d$ on a set $X,$ is a function $d\colon X\times X\to [0,\infty)$ satisfying symmetry and the triangle inequality, however we might have $d(x,y)=0$ and $x\ne y.$ A set $X$ with a pseudometric $d$ is a \emph{pseudometric space}. If $\|\cdot\|$ is a pseudonorm on $\RR^{n},$ we will use
\[\|a+\ZZ^{n}\|=\inf_{k\in \ZZ^{n}}\|a+k\|,\]
this induces a pseudometric $\rho$ on $\TT^{n}$ by
\[\rho(a,b)=\|a-b\|.\]
If $(X,d)$ is a pseudometric space, and $A,B\subseteq X,$ we write $A\subseteq_{\varepsilon}B$ to mean that $A$ is contained in the $\varespilon$ neighborhood of $B.$ A set $A\subseteq X$ is said to be $\varepsilon$-dense if $X\subseteq_{\varepsilon}A.$   We let $S_{\varepsilon}(X,d)$ be the smallest cardinality of an $\varepsilon$-dense subset. Note that if $A\subseteq_{\delta}B,$ then
\begin{equation}\label{E:canonicalperturbationinequality}
S_{2(\varepsilon+\delta)}(A,d)\leq S_{\varepsilon}(B,d).
\end{equation}
We use $\mu_{n}$ for the uniform probability measure on $\{1,\cdots,n\}.$ If $A$ is a finite set, we use $\mu_{A}$ for the uniform probability measure on $A.$ Lastly, we use $\tr$ for $\frac{1}{n}\Tr$ on $M_{n}(\CC).$ Throughout the paper we will use the term ``microstate" for any (sufficiently nice) finitary model of our structure. This will usually be almost equivariant maps into some finite set or finite-dimensional space. Thus the term microstate is not well-defined. This will not cause us any problem as we will only use it in heuristic terms.

\section{Preliminaries and the von Neumann Dimension Lemma}\label{S:prelim}

\subsection{ Tracial von Neumann Algebras and von Neumann-L\"{u}ck Rank}

\begin{definition}\emph{ Let $\mathcal{H}$ be a Hilbert space. A unital subalgebra $M\subseteq B(\mathcal{H})$ is a} von Neumann algebra \emph{ if it is closed under adjoints and in the weak operator topology. A} faithful normal tracial state \emph{on $M$ is a linear functional $\tau\colon M\to \CC$ satisfying the following hypotheses:}\end{definition}

\begin{list}{ \arabic{pcounter}:~}{\usecounter{pcounter}}
\item $\tau(1)=1,$\\
\item $\tau(x^{*}x)\geq 0,$  with equality if and only if $x=0$,\\
\item $\tau(xy)=\tau(yx),$  for all $x,y\in M,$
\item $\tau\big|_{\{x\in M:\|x\|\leq 1\}}$ is weak operator topology continuous.
\end{list}

In the above, $\|\cdot\|$ is the operator norm. The pair $(M,\tau)$ is called a \emph{tracial} von Neumann algebra.

	Here is the main example of a tracial von Neumann of concern for this paper. Let $\Gamma$ be a countable discrete group, and $\lambda\colon \Gamma\to \mathcal{U}(\ell^{2}(\Gamma))$ its left regular representation defined by $\lambda(g)f(x)=f(g^{-1}x).$ The group von Neumann algebra  $L(\Gamma)$ is defined by $L(\Gamma)=\overline{\Span}^{\mbox{WOT}}\{\lambda(g):g\in \Gamma\},$ where WOT is the weak operator topology. For $x\in L(\Gamma),$ set $\tau(x)=\ip{x\delta_{e},\delta_{e}}$. It is known that $\tau$ is a faithful normal tracial state on $L(\Gamma).$ We will call $\tau$ the \emph{group trace}. We identify $\CC(\Gamma)$ as a $*$-subalgebra of $L(\Gamma).$ We remark that the $*$-structure on $\CC(\Gamma)$ is given by
\[\left(\sum_{g\in\Gamma}\alpha_{g}g\right)^{*}=\sum_{g\in\Gamma}\overline{\alpha_{g^{-1}}}g.\]
 Under this identification, for $f\in\CC(\Gamma)$ we set $\widehat{f}(g)=\tau(f \lambda(g^{-1})).$ Thus,
\[f=\sum_{g\in \Gamma}\widehat{f}(g)\lambda(g).\]

	We also need to talk about the \emph{right} group von Neumann algebra. For this, consider the right regular representation
\[\rho\colon \Gamma\to \mathcal{U}(\ell^{2}(\Gamma))\]
given by
\[(\rho(g)\xi)(x)=\xi(xg).\]
We let $R(\Gamma)=\overline{\Span}^{\mbox{WOT}}\{\rho(g):g\in\Gamma\}.$ We let
\[\tau(x)=\ip{x\delta_{e},\delta_{e}},\]
then $\tau$ is a trace on $R(\Gamma).$  Technically, we should use different letters for the trace on $R(\Gamma)$ and the one on $L(\Gamma).$ It will typically be  clear from context which von Neumann algebra we are talking about. If we feel the need to specify which algebra we are talking about, we will say $\tau_{R(\Gamma)}$ for the trace on $R(\Gamma)$ and $\tau_{L(\Gamma)}$ for the trace on $L(\Gamma).$ We need the important facts (see \cite{BO} Theorem 6.1.4)
\[\{T\in B(\ell^{2}(\Gamma)):[T,\lambda(g)]=0\mbox{ for all $g\in\Gamma$}\}=R(\Gamma),\]
\[\{T\in B(\ell^{2}(\Gamma)):[T,\rho(g)]=0\mbox{ for all $g\in\Gamma$}\}=L(\Gamma).\]

We extend $\rho$ to $\CC(\Gamma)$ in the usual way. We will also need to amplify these maps to the matricial levels. We define
\[\lambda\colon M_{m,n}(\CC(\Gamma))\to  B(\ell^{2}(\Gamma)^{\oplus n},\ell^{2}(\Gamma)^{\oplus m})\]
\[\rho\colon M_{m,n}(\CC(\Gamma))\to B(\ell^{2}(\Gamma)^{\oplus n},\ell^{2}(\Gamma)^{\oplus m})\]
by
\[(\lambda(f)\xi)(j)=\sum_{l=1}^{n}f_{jl}\xi_{l}\]
\[(\rho(f)\xi)(j)=\sum_{l=1}^{n}\rho(f_{jl})\xi_{l}.\]
Recall that we are identifying $\CC(\Gamma)\subseteq L(\Gamma).$ We note here that if you are to view $\ell^{2}(\Gamma)^{\oplus n}$ as column vectors, then $\rho(f)$ is not multiplication by $f$! The reader should simply think of $\rho$ as \emph{some} representation of $\Gamma,$ and we are just applying  general procedure to induce this representation to an ``action'' of $M_{m,n}(\CC(\Gamma))$ (by ``action'' we mean $\rho(AB)=\rho(A)\rho(B)$ for $A\in M_{m,n}(\CC(\Gamma)),B\in M_{n,k}(\CC(\Gamma))$).

We will often need to switch from left to right multiplication in several proofs. For this, it is helpful to introduce the following partial adjoint operation. Given $x\in L(\Gamma)^{\oplus n}$ with
\[x=\begin{bmatrix}
x_{1}\\
x_{2}\\
\vdots\\
x_{n}
\end{bmatrix},\]
define $\widetilde{x}\in M_{1,n}(L(\Gamma))$ by
\[\widetilde{x}=\begin{bmatrix}
x_{1}& x_{2}& x_{3}& \cdots& x_{n}
\end{bmatrix}.\]
Since we view $\CC(\Gamma)\subseteq L(\Gamma),$ this operation also applies to elements of $\CC(\Gamma)^{\oplus n}.$ We call this a partial adjoint since it does not coincide with full adjoint on $L(\Gamma)^{\oplus n}\cong M_{n,1}(L(\Gamma))$ given by
\[\begin{bmatrix}
x_{1}\\
x_{2}\\
\vdots\\
x_{n}
\end{bmatrix}^{*}=\begin{bmatrix}
x_{1}^{*}&x_{2}^{*}&\cdots&x_{n}^{*}
\end{bmatrix}.\]

\begin{definition}\emph{Let $(M,\tau)$ be a tracial von Neumann algebra. Define $\tau\otimes \Tr\colon M_{n}(M)\to \CC$ by}
\[\tau\otimes \Tr(A)=\sum_{i=1}^{n}\tau(A_{ii}).\]
\end{definition}

	If $A$ is  a finitely generated projective module over $M,$ then $A\cong M^{\oplus n}p,$ for some $n\in\NN$ and some idempotent $p\in M_{n}(M).$ It thus makes sense to define $\dim_{M}(A)=\tau\otimes Tr(p),$ and this does not depend on $p.$ This essentially goes back to Murray and von Neumann. L\"{u}ck (see \cite{Luck1}) proved that one can extend this to arbitrary modules over $M,$ by defining
\[\dim_{M}(A)=\sup\{\dim_{M}(B):B\subseteq A \mbox{ is a finitely generated projective module}\}.\]

	This algebraicized von Neumann dimension. Additionally, many of the properties of von Neumann dimension persist in this extension, for example dimension is still additive under exact sequences. (For more, see \cite{Luck}, Chapter 6). Let $\Gamma$ be a countable discrete group, and $L(\Gamma)$ its von Neumann algebra equipped with the standard group trace. By the above, one can define the von Neumann-L\"{u}ck rank of a $\ZZ(\Gamma)$-module $A$ by
\[\vr(A)=\dim_{L(\Gamma)}(L(\Gamma)\otimes_{\ZZ(\Gamma)}A).\]
While L\"{u}ck algebraicized  von Neumann dimension, for us it will be more useful to go in the opposite direction. That is, we will need to use a more analytic definition of the von Neumann-L\"{u}ck rank for $\ZZ(\Gamma)$-modules. For this, let us recall the original definition due to Murray and von Neumann of von Neumann dimension. Suppose that $\mathcal{H}$ is a closed $\lambda(\Gamma)$-invariant subspace of $\ell^{2}(\Gamma)^{\oplus n}$ for some $n\in\NN.$ Then the projection onto $\mathcal{H},$ denoted $P_{\mathcal{H}},$ commutes with $\lambda(\Gamma)$, and thus by our preceding discussion we see that $P_{\mathcal{H}}\in M_{n}(R(\Gamma)).$ We can thus define
\[\dim_{R(\Gamma)}(\mathcal{H})=\Tr\otimes \tau(P_{\mathcal{H}}).\]
It is known that this agrees with viewing $\mathcal{H}$ as an algebraic left module over $L(\Gamma)$ and applying L\"{u}ck's definition of dimension. Similarly, if $\mathcal{H}$ is a closed $\rho(\Gamma)$-invariant subspace of $\ell^{2}(\Gamma)^{\oplus n}$, for some $n\in \NN,$ then $P_{\mathcal{H}}$ commutes with $\rho(\Gamma)$ and is thus in $M_{n}(L(\Gamma)).$ We then set
\[\dim_{L(\Gamma)}(\mathcal{H})=\Tr\otimes \tau(P_{\mathcal{H}}).\]
These are both special cases of von Neumann dimension as defined by Murray and von Neumann.

\begin{lemma}\label{L:hilred} Let $\Gamma$ be a countable discrete group, $n\in \NN,$ and $B\subseteq \ZZ(\Gamma)^{\oplus n}$ a left $\ZZ(\Gamma)$-submodule. Let $\eta\colon \CC(\Gamma)\to \ell^{2}(\Gamma)$ be the injection given by
\[\eta(f)(g)=\widehat{f}(g),\]
and let $\eta^{\oplus n}\colon \CC(\Gamma)^{\oplus n}\to \ell^{2}(\Gamma)^{\oplus n}$ be given by
\[\eta^{\oplus n}\left(\begin{bmatrix}
\alpha_{1}\\
\alpha_{2}\\
\vdots\\
\alpha_{n}\\
\end{bmatrix}\right)=
\begin{bmatrix}
\eta(\alpha_{1})\\
\eta(\alpha_{2})\\
\vdots\\
\eta(\alpha_{n})
\end{bmatrix}.\]
Let $\mathcal{H}_{B}=\overline{\Span}^{\|\cdot\|_{2}}\{\lambda \eta^{\oplus n}(f):\lambda\in \CC,f\in B\}.$ Then
\[\vr(\ZZ(\Gamma)^{\oplus n}/B)=n-\dim_{R(\Gamma)}(\mathcal{H}_{B}).\]

\end{lemma}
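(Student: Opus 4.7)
The plan is to combine the defining short exact sequence for $A := \ZZ(\Gamma)^{\oplus n}/B$ with additivity of the Lück dimension, and then identify an algebraic dimension with a Murray--von Neumann dimension of a Hilbert subspace. Starting from
\[
0 \to B \to \ZZ(\Gamma)^{\oplus n} \to A \to 0
\]
and applying $L(\Gamma)\otimes_{\ZZ(\Gamma)}-$, right exactness gives
\[
L(\Gamma)\otimes_{\ZZ(\Gamma)} B \to L(\Gamma)^{\oplus n} \to L(\Gamma)\otimes_{\ZZ(\Gamma)} A \to 0.
\]
Let $N \subseteq L(\Gamma)^{\oplus n}$ be the image of the first map, i.e.\ the left $L(\Gamma)$-submodule generated by $B$. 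Then there is a short exact sequence $0 \to N \to L(\Gamma)^{\oplus n} \to L(\Gamma)\otimes_{\ZZ(\Gamma)} A \to 0$, and Lück's additivity of dimension on arbitrary modules yields $\vr(A) = n - \dim_{L(\Gamma)}(N)$.

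Next I would identify $N$ inside $\ell^{2}(\Gamma)^{\oplus n}$ via the natural inclusion $L(\Gamma)^{\oplus n}\hookrightarrow \ell^{2}(\Gamma)^{\oplus n}$, $x\mapsto x\delta_{e}$ (amplified). This inclusion extends $\eta^{\oplus n}$ on $\ZZ(\Gamma)^{\oplus n}$, and for $x\in L(\Gamma)$, $b\in B$ sends $x\cdot b$ to $\lambda(x)\eta^{\oplus n}(b)$. Thus the image of $N$ is the $\CC$-linear span of $\{\lambda(x)\eta^{\oplus n}(b):x\in L(\Gamma),\,b\in B\}$, and I claim its $\|\cdot\|_{2}$-closure equals $\mathcal{H}_{B}$. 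The inclusion $\overline{N}\subseteq \mathcal{H}_{B}$ holds because $\mathcal{H}_{B}$ is closed and $\lambda(\Gamma)$-invariant (as $\lambda(g)\eta^{\oplus n}(b) = \eta^{\oplus n}(gb)\in \eta^{\oplus n}(B)$), hence $L(\Gamma)$-invariant by density; the reverse inclusion is immediate from $\eta^{\oplus n}(B)\subseteq N$.

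The final step is to show $\dim_{L(\Gamma)}(N) = \dim_{R(\Gamma)}(\mathcal{H}_{B})$. Here I would use the standard reconciliation between Lück's algebraic dimension and the Murray--von Neumann dimension for (not necessarily closed) submodules of $L(\Gamma)^{\oplus n}$: the projection $P_{\mathcal{H}_{B}}\in M_{n}(R(\Gamma))$ picks out a finitely generated projective module $L(\Gamma)^{\oplus n}(1-P_{\mathcal{H}_{B}})^{\perp}$ (viewed on the $L(\Gamma)^{\oplus n}$ side), and the Lück dimension of $N$ agrees with $\Tr\otimes\tau(P_{\mathcal{H}_{B}})$ because passing to the Hilbert-space closure only adds a submodule of dimension zero. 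Combining with the previous step gives $\vr(A) = n - \dim_{R(\Gamma)}(\mathcal{H}_{B})$.

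The main obstacle is step three, the bridge between the purely algebraic quantity $\dim_{L(\Gamma)}(N)$ and the spatial quantity $\dim_{R(\Gamma)}(\mathcal{H}_{B})$. The subtlety is that $L(\Gamma)\otimes_{\ZZ(\Gamma)} B\to L(\Gamma)^{\oplus n}$ is typically not injective, so one cannot literally identify $N$ with $\mathcal{H}_{B}$; one must appeal to the fact that the Lück-dimension of a submodule of a finitely generated free $L(\Gamma)$-module agrees with the trace of the projection onto its Hilbert-space closure. Once this bridge is in place, the rest of the argument is a formal consequence of right exactness of the tensor product and Lück's additivity theorem.
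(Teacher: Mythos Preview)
Your proposal is correct and follows a genuinely different route from the paper. You and the paper agree on the first step (right exactness plus additivity gives $\vr(A)=n-\dim_{L(\Gamma)}(N)$ with $N=L(\Gamma)B$), but you then argue directly that the $\ell^{2}$-closure of $N$ is $\mathcal{H}_{B}$ and invoke the general L\"{u}ck-theory fact that for a submodule $N\subseteq L(\Gamma)^{\oplus n}$ one has $\dim_{L(\Gamma)}(N)=\Tr\otimes\tau(P_{\overline{N}})$. The paper instead reduces to finitely generated $B$ via continuity of dimension under inductive limits, packages generators into a matrix $f\in M_{m,n}(\ZZ(\Gamma))$, quotes Li--Liang's Lemma 5.4 to get $\vr(\ZZ(\Gamma)^{\oplus n}/B)=\dim_{L(\Gamma)}(\ker\lambda(f))$, and then passes from $\overline{\im\lambda(f^{*})}$ to $\overline{\im\rho(f^{*})}=\mathcal{H}_{B}$ by conjugating with the inversion unitary $K\colon\delta_{g}\mapsto\delta_{g^{-1}}$. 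Your argument is cleaner and avoids both the external reference and the $K$-trick, at the price of treating the equality $\dim_{L(\Gamma)}(N)=\Tr\otimes\tau(P_{\overline{N}})$ as a black box from L\"{u}ck's book; the paper's route is more explicit and, because it produces the matrix $f$, dovetails with the later computations in the paper (e.g.\ Proposition~\ref{P:microstatesrank} and Theorem~\ref{T:main}) where exactly this $f$ reappears.
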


\begin{proof} By right exactness of tensor products, we have the exact sequence
\[\begin{CD}  L(\Gamma)\otimes_{\ZZ(\Gamma)}B @>>> L(\Gamma)^{\oplus n} @>>> L(\Gamma)\otimes_{\ZZ(\Gamma)} (\ZZ(\Gamma)^{\oplus n}/B) @>>> 0.\end{CD}\]
Given $\xi\in L(\Gamma),$ define $\xi\otimes 1\in M_{n}(L(\Gamma))$ by
	\[(\xi\otimes 1)_{ij}=\begin{cases}
	0,& \mbox{ if $i\ne j$}\\
	\xi,& \mbox{ if $i=j$}
	\end{cases}\]
Then
\begin{equation}\label{E:idaongao}\vr(\ZZ(\Gamma)^{\oplus n}/B)=\dim_{L(\Gamma)}( L(\Gamma)^{\oplus n}/L(\Gamma)B)=n-\dim_{L(\Gamma)}(L(\Gamma)B),
\end{equation}
where $L(\Gamma)B=\Span\{(\xi\otimes 1) f:\xi\in L(\Gamma),f\in B\}.$ Hence it suffices to show that $\dim_{L(\Gamma)}(L(\Gamma)B)=\dim_{R(\Gamma)}(\mathcal{H}_{B})$ for any submodule $B\subseteq \ZZ(\Gamma)^{\oplus n}.$ It is known that von Neumann dimension is continuous for taking inductive limits (see \cite{Luck} Theorem 6.3.1). So, it suffices to assume that $B$ is finitely generated as a $\ZZ(\Gamma)$-module, say by $f_{1},\cdots, f_{m}.$
 Let $f\in M_{m,n}(\ZZ(\Gamma))$ be given by
 \[f=\begin{bmatrix}
\widetilde{f_{1}}\\
\widetilde{f_{2}}\\
\vdots\\
\widetilde{f_{m}}
\end{bmatrix}.\]
A direct computation shows that for $\alpha\in \CC(\Gamma)^{\oplus m}$ with
\[\alpha=\begin{bmatrix}
\alpha_{1}\\
\alpha_{2}\\
\vdots\\
\alpha_{m}
\end{bmatrix}\] we have
\[\rho(f^{*})\alpha=\sum_{s=1}^{m}(\alpha_{s}\otimes 1)f_{s},\]
viewing $\CC(\Gamma)\subseteq L(\Gamma)$ and using the notation introduced at the beginning of the lemma. So
\[\mathcal{H}_{B}=\overline{\im(\rho(f^{*}))}.\]
By Lemma 5.4  in \cite{LiLiang}we know that
\[\vr(\ZZ(\Gamma)^{\oplus n}/B)=\dim_{L(\Gamma)}(\ker\lambda(f))\]
and so we have from (\ref{E:idaongao}) that
\[\dim_{L(\Gamma)}(L(\Gamma)B)=n-\vr(\ZZ(\Gamma)^{\oplus n}/B)=n-\dim_{L(\Gamma)}(\ker\lambda(f)).\]
Using that $\ell^{2}(\Gamma)^{\oplus n}=\ker(\lambda(f))\oplus \ker(\lambda(f))^{\perp}$ and Theorem 1.12 (2) of \cite{Luck} we have
\[n-\dim_{L(\Gamma)}(\ker\lambda(f))=\dim_{L(\Gamma)}(\ker\lambda(f)^{\perp})=\dim_{L(\Gamma)}(\overline{\im\lambda(f^{*})}).\]
So
\[\dim_{L(\Gamma)}(L(\Gamma)B)=\dim_{L(\Gamma)}(\overline{\im\lambda(f^{*})}).\]
Define
\[K\colon \ell^{2}(\Gamma)\to \ell^{2}(\Gamma)\]
by
\[(K\xi)(g)=\xi(g^{-1}),\]
and
\[U\colon \ell^{2}(\Gamma)^{\oplus m}\to \ell^{2}(\Gamma)^{\oplus m}\]
by
\[U\left(\begin{bmatrix}
\xi_{1}\\
\xi_{2}\\
\vdots\\
\xi_{m}
\end{bmatrix}\right)=\begin{bmatrix}
K(\xi_{1})\\
K(\xi_{2})\\
\vdots\\
K(\xi_{m})
\end{bmatrix}.\]
Then $U$ is a unitary and
\[UM_{n}(L(\Gamma))U^{*}=M_{n}(R(\Gamma)),\]
viewing $M_{n}(L(\Gamma)),M_{n}(R(\Gamma))$ as operators on $\ell^{2}(\Gamma)^{\oplus n}.$  Further
\[\Tr\otimes \tau_{R(\Gamma)}(UxU^{*})=\Tr\otimes \tau_{L(\Gamma)}(x),\]
for all $x\in M_{n}(L(\Gamma)).$  It follows that
\[\dim_{L(\Gamma)}(\overline{\im\lambda(f^{*})})=\dim_{R(\Gamma)}(\overline{\im\rho(f^{*})}).\]
Thus we conclude
\[\dim_{L(\Gamma)}(L(\Gamma)B)=\dim_{R(\Gamma)}(\overline{\im\rho(f^{*})})=\dim_{R(\Gamma)}(\mathcal{H}_{B}).\]

\end{proof}

We finally collect some technical lemmas on von Neumann dimension, which will be used frequently throughout the paper. For the next lemma we make some preliminary definitions.

\begin{definition}\emph{We say that a countable discrete group $\Gamma$ is} sofic, \emph{if there is a sequence of maps $\sigma_{i}\colon \Gamma\to S_{d_{i}}$  with $d_{i}\to \infty,$ such that}
\[\PP(\{1\leq k\leq d_{i}:\sigma_{i}(g)\sigma_{i}(h)(k)=\sigma_{i}(gh)(k)\})\to 1,\mbox{\emph{for all $g,h\in \Gamma$}}\]
\[\PP(\{1\leq k\leq d_{i}:\sigma_{i}(g)(k)\ne \sigma_{i}(h)(k)\})\to 1,\mbox{\emph{for all $g\ne h$ in $\Gamma$}}.\]
\emph{Here $\PP$ is with respect to the uniform probability measure on $\{1,\cdots,d_{i}\}.$ A sequence $\Sigma$ as above is called a} sofic approximation \emph{ of $\Gamma.$}\end{definition}

	It is known that all amenable groups and residually finite groups are sofic. Also, it is known that soficity is closed under free products (see \cite{ESZ1}), and under increasing unions. Additionally, residually sofic groups and locally sofic groups are sofic. This implies all linear groups are sofic. Finally, if $\Lambda\triangleleft \Gamma,$ with $\Lambda$ sofic, and $\Gamma/\Lambda$ amenable, then $\Gamma$ is sofic (see \cite{ESZ1}). See \cite{DKP} and \cite{KerrDykemaPichot2} for alternate proofs of these facts.

	Our main use of sofic groups (aside from metric mean dimension and mean dimension being defined) is that once we know how to model the group by permutations, we automatically get a way to model $\CC(\Gamma)$ and $M_{n,m}(\CC(\Gamma)$) by matrices. Indeed, we can extend $\sigma_{i}$ to a map
\[\sigma_{i}\colon \CC(\Gamma)\to M_{d_{i}}(\CC)\]
by
\[\sigma_{i}(f)=\sum_{g\in \Gamma}\widehat{f}(g)\sigma_{i}(g),\]
and further to a map
\[\sigma_{i}\colon M_{n,m}(\CC(\Gamma))\to M_{n,m}(M_{d_{i}}(\CC))\]
by
\[\sigma_{i}(A)_{jk}=\sigma_{i}(A_{jk}).\]

	It turns out that many spectral properties of elements in $M_{n,m}(\CC(\Gamma))$ are reflected by their approximates in $M_{n,m}(M_{d_{i}}(\CC))$ and this will be exploited several times throughout the paper. Moreover these approximations model $M_{n,m}(\RR(\Gamma))$ by real matrices, and similarly for $M_{n,m}(\ZZ(\Gamma)),$ and this will be important when we turn to mean dimension at the end of the paper. We now describe how spectral properties of elements in $M_{n,m}(\CC(\Gamma))$ are described by their approximates in $M_{n,m}(M_{d_{i}}(\CC)).$ For the next definition, we use $\|\cdot\|_{2}$ on $M_{n,m}(\CC)$ for the norm given by
\[\|A\|_{2}^{2}=\frac{1}{m}\Tr(A^{*}A).\]

\begin{definition}\emph{Let $(M,\tau)$ be a tracial von Neumann algebra. Let $f\in M_{n}(M)$ be a normal element, the} spectral measure for $x,$ \emph{(sometimes called the spectral measure of $x$ with respect to $\tau$) denoted $\mu_{f},$ is defined by $\mu_{f}(B)=\tau\otimes \Tr(\chi_{B}(f))$ if $B$ is a Borel subset of the spectrum of $x.$ It is an exercise to show that for all bounded complex valued Borel functions $g$ on the spectrum of $f,$ we have}
\[\tau\otimes \Tr(g(f))=\int g\,d\mu_{f}.\]
\end{definition}

The next Lemma is essentially folklore and we include the proof for completeness. Recall that if $(M,\tau)$ is a tracial von Neumann algebra, and $A,B\in M_{n,m}(M),$ then we define $\|\cdot\|_{2}$ on $M_{n,m}(M)$ by
\[\|A-B\|_{2}^{2}=\frac{1}{m}\tau\otimes \Tr(|A-B|^{2}).\]

\begin{lemma}\label{L:weak^{*}} Let $\Gamma$ be a countable discrete sofic group and $\sigma_{i}\colon \Gamma\to S_{d_{i}}$ a sofic approximation.  Extend $\sigma_{i}$ to $\CC(\Gamma)\to M_{d_{i}}(\CC)$ by $\sigma_{i}(f)=\sum_{g\in\Gamma}\widehat{f}(g)\sigma_{i}(g).$ Further extend  $\sigma_{i}$ to $M_{n,m}(\CC(\Gamma))\to M_{n,m}(M_{d_{i}}(\CC))$ by $\sigma_{i}(f)_{jk}=\sigma_{i}(f_{jk}).$    Fix $f\in M_{n,m}(\CC(\Gamma)),$ suppose that $A_{i}\in M_{n,m}(M_{d_{i}}(\CC))$ satisfy $\sup_{i}\|A_{i}\|_{\infty}<\infty,$ and  $\|A_{i}-\sigma_{i}(f)\|_{2}\to 0.$ Then $\mu_{|A_{i}|}\to \mu_{|f|}$ weak$^{*}.$

\end{lemma}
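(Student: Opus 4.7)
The plan is to reduce weak$^{*}$ convergence of $\mu_{|A_i|}$ to that of $\mu_{A_i^{*}A_i}$, and then verify the latter by testing against polynomials, where the computation reduces to the standard sofic modeling of the $*$-algebra $M_{n,m}(\CC(\Gamma))$.

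First I would pin down a uniform operator-norm bound: since $f$ is finitely supported and $\sigma_{i}(g)$ is a permutation matrix for each $g\in\Gamma$, one has $\|\sigma_{i}(f)\|_{\infty}\leq\sum_{g,j,k}|\widehat{f_{jk}}(g)|$, which together with the hypothesis $\sup_{i}\|A_{i}\|_{\infty}<\infty$ produces a constant $C$ so that the spectra of $|A_{i}|$ and $|f|$ all lie in $[0,C]$, and hence $\mu_{A_{i}^{*}A_{i}}, \mu_{f^{*}f}$ are all supported in $[0,C^{2}]$. The map $\Phi(x)=\sqrt{x}$ is continuous on $[0,C^{2}]$, and by the definition of the spectral measure together with the functional calculus identity $g(|A_{i}|)=(g\circ\Phi)(A_{i}^{*}A_{i})$, one gets $\mu_{|A_{i}|}=\Phi_{*}\mu_{A_{i}^{*}A_{i}}$ and similarly for $f$. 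Thus weak$^{*}$ convergence $\mu_{A_{i}^{*}A_{i}}\to\mu_{f^{*}f}$ will imply the desired convergence $\mu_{|A_{i}|}\to\mu_{|f|}$. By Stone–Weierstrass, to prove the former it suffices to show $\tr\otimes\Tr((A_{i}^{*}A_{i})^{k})\to\tau\otimes\Tr((f^{*}f)^{k})$ for every integer $k\geq 0$.

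I would establish the polynomial convergence by chaining three approximations, each controlled in $\|\cdot\|_{2}$, using the elementary bound $|\tr\otimes\Tr(X)|\leq m\|X\|_{2}$ (Cauchy–Schwarz) to pass from $\|\cdot\|_{2}$-estimates to trace estimates. \emph{Step A}: $\|(A_{i}^{*}A_{i})^{k}-(\sigma_{i}(f)^{*}\sigma_{i}(f))^{k}\|_{2}\to 0$. The telescoping identity $X^{k}-Y^{k}=\sum_{j=0}^{k-1}X^{j}(X-Y)Y^{k-1-j}$ combined with the uniform operator-norm bound reduces this to $\|A_{i}^{*}A_{i}-\sigma_{i}(f)^{*}\sigma_{i}(f)\|_{2}\leq(\|A_{i}\|_{\infty}+\|\sigma_{i}(f)\|_{\infty})\|A_{i}-\sigma_{i}(f)\|_{2}\to 0$. \emph{Step B}: $\|(\sigma_{i}(f)^{*}\sigma_{i}(f))^{k}-\sigma_{i}((f^{*}f)^{k})\|_{2}\to 0$. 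This uses the approximate $*$-preservation $\|\sigma_{i}(f)^{*}-\sigma_{i}(f^{*})\|_{2}\to 0$ (a consequence of $\sigma_{i}(g)\sigma_{i}(g^{-1})\approx\sigma_{i}(e)\approx 1$ under the sofic condition) and approximate multiplicativity $\|\sigma_{i}(h_{1})\sigma_{i}(h_{2})-\sigma_{i}(h_{1}h_{2})\|_{2}\to 0$ for fixed matrix elements $h_{1},h_{2}\in M(\CC(\Gamma))$, which one applies a bounded number of times together with telescoping under the same operator-norm bound. \emph{Step C}: $\tr\otimes\Tr\sigma_{i}((f^{*}f)^{k})\to\tau\otimes\Tr((f^{*}f)^{k})$, which by linearity reduces to the scalar statement $\tr(\sigma_{i}(g))\to\delta_{g,e}=\tau_{L(\Gamma)}(\lambda(g))$, and this is literally the two defining properties of a sofic approximation (probability of being a fixed point tends to $1$ if $g=e$ and to $0$ otherwise).

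There is no serious obstacle here; the only bookkeeping to be careful about is that all the telescoping arguments rely on the uniform operator-norm bounds on both $A_{i}$ and $\sigma_{i}(f)$, so the initial reduction to a bounded-spectrum setting has to be done before any of the $\|\cdot\|_{2}$-approximations are iterated. Conceptually, the lemma is the matricial extension of the familiar fact that the sofic approximation $\sigma_{i}$ computes $*$-moments in $\CC(\Gamma)$ against the group trace in the limit, and Step B is the heart of the matter: it is the only place where the sofic property of $\Sigma$ (as opposed to mere uniform boundedness of $\sigma_{i}$) is actually invoked.
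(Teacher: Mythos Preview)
Your proof is correct and follows essentially the same approach as the paper: reduce weak$^{*}$ convergence to convergence of polynomial moments of $|A_i|^2$ via Weierstrass, handle the perturbation from $A_i$ to $\sigma_i(f)$ by telescoping under the uniform operator-norm bound, and handle the $\sigma_i(f)$ case via the asymptotic $*$-homomorphism property of the sofic approximation. The paper organizes these steps in a slightly different order (first disposing of the case $A_i=\sigma_i(f)$ in one line, then doing the telescoping reduction), and is terser about what you call Steps B and C, but the content is the same; one minor caveat is that your displayed bound in Step A should carry a dimensional constant $\sqrt{n/m}$ on one of the terms (cf.\ the paper's inequalities $\|ST\|_2\leq\sqrt{n/m}\,\|T\|_\infty\|S\|_2$ and $\|T\|_2=\sqrt{n/m}\,\|T^*\|_2$), but this of course does not affect the conclusion.
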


\begin{proof}  Let us first do this when $A_{i}=\sigma_{i}(f).$  Our claim is equivalent to the statement that
\[\tr\otimes \Tr(\phi(\sigma_{i}(f)^{*}\sigma_{i}(f)))\to \tau\otimes \Tr(\phi(f^{*}f))\]
for all continuous functions on $[0,\infty).$  It follows from the definition that
\[\tr\otimes \Tr(P(\sigma_{i}(f)^{*}\sigma_{i}(f)))\to \tau \otimes \Tr(P(f^{*}f)),\]
for all $P\in \CC[z].$ Since $\|\sigma_{i}(f)^{*}\sigma_{i}(f)\|_{\infty}$ is uniformly bounded,  the Weierstrass Approximation Theorem handles the case when $A_{i}=\sigma_{i}(f).$

	In the general case, again by the Weierstrass Approximation theorem we only have to show that
	\[\tr\otimes \Tr(|A_{i}|^{2k})\to \tau\otimes \Tr(|f|^{2k}),\]
for every $k\in \NN.$ Fix $k\in \NN,$ then
\[|\tr\otimes \Tr(|\sigma_{i}(f)|^{2k}-|A_{i}|^{2k})|\leq m\||\sigma_{i}(f)|^{2k}-|A_{i}|^{2k}\|_{2}.\]
We claim that
\[\||\sigma_{i}(f)|^{2k}-|A_{i}|^{2k}\|_{2}\to 0,\]
this will complete the proof, since we know the Lemma holds when $A_{i}=\sigma_{i}(f).$

	For this, note that
\[\||\sigma_{i}(f)|^{2k}-|A_{i}|^{2k}\|_{2}\leq \|\sigma_{i}(f)\|_{\infty}^{2}\||\sigma_{i}(f)|^{2(k-1)}-|A_{i}|^{2(k-1)}\|_{2}+\||A_{i}|^{2(k-1)}\|_{\infty}\||\sigma_{i}(f)|^{2}-|A_{i}|^{2}\|_{2}\]
and so it suffices to handle the case $k=1.$ We use that if $T\in M_{n,m}(M_{d_{i}}(\CC)),S\in M_{m,n}(M_{d_{i}}(\CC)),$ then
\[\|ST\|_{2}\leq \|S\|_{\infty}\|T\|_{2},\]
\[\|ST\|_{2}\leq \sqrt{\frac{n}{m}}  \|T\|_{\infty}\|S\|_{2},\]
\[\|T\|_{2}=\sqrt{\frac{n}{m}}\|T^{*}\|_{2}.\]
Hence,
\[\||\sigma_{i}(f)|^{2}-|A_{i}|^{2}|_{2}\leq \|\sigma_{i}(f)\|_{\infty}\|\sigma_{i}(f)-A_{i}\|_{2}+\|\sigma_{i}(f)-A_{i}\|_{2}\|A_{i}\|_{\infty}\to 0.\]

\end{proof}

	The proof of the next Lemma contains the essential ideas behind our use of sofic approximations to compute von Neumann dimension.

\begin{lemma}[The von Neumann Dimension Lemma]\label{L:vnDLemma} Let $\Gamma$ be a countable discrete sofic group and $\sigma_{i}\colon \Gamma\to S_{d_{i}}$ a sofic approximation.  Extend $\sigma_{i}$ to $\RR(\Gamma)\to M_{d_{i}}(\RR)$ by $\sigma_{i}(f)=\sum_{g\in\Gamma}\widehat{f}(g)\sigma_{i}(g).$ Further extend  $\sigma_{i}$ to $M_{n,m}(\RR(\Gamma))\to M_{n,m}(M_{d_{i}}(\RR))$ by $\sigma_{i}(f)_{jk}=\sigma_{i}(f_{jk}).$  Fix $f\in M_{n,m}(\RR(\Gamma)).$

	The following quantities are equal:

\[(i) \dim_{R(\Gamma)}(\ker \rho(f)),\]

\[(ii) \liminf_{\varepsilon\to 0}\inf_{\eta}\liminf_{i\to \infty}\frac{\log S_{\varepsilon}(\chi_{[0,\eta]}(|\sigma_{i}(f)|)\Ball(\ell^{2}_{\RR}(d_{i},\mu_{d_{i}})^{\oplus m}))}{d_{i}\log(1/\varepsilon)},\]

\[(iii)\limsup_{\varepsilon\to 0}\inf_{\eta}\limsup_{i\to \infty}\frac{\log S_{\varepsilon}(\chi_{[0,\eta]}(|\sigma_{i}(f)|)\Ball(\ell^{2}_{\RR}(d_{i},\mu_{d_{i}})^{\oplus m}))}{d_{i}\log(1/\varespilon)},\]

\[(iv)\inf_{\eta}\liminf_{\varepsilon\to 0}\liminf_{i\to \infty}\frac{\log S_{\varepsilon}(\chi_{[0,\eta]}(|\sigma_{i}(f)|)\Ball(\ell^{2}_{\RR}(d_{i},\mu_{d_{i}})^{\oplus m}))}{d_{i}\log(1/\varespilon)},\]
\[(v)\inf_{\eta}\limsup_{\varepsilon\to 0}\limsup_{i\to \infty}\frac{\log S_{\varepsilon}(\chi_{[0,\eta]}(|\sigma_{i}(f)|)\Ball(\ell^{2}_{\RR}(d_{i},\mu_{d_{i}})^{\oplus m}))}{d_{i}\log(1/\varespilon)}.\]

\end{lemma}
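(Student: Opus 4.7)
Write $D := \dim_{R(\Gamma)}(\ker \rho(f))$, and note that $D = \mu_{|f|}(\{0\})$: indeed $\ker\rho(f) = \ker |\rho(f)|$, and the spectral measures of $|\lambda(f)| \in M_m(L(\Gamma))$ and $|\rho(f)| \in M_m(R(\Gamma))$ coincide because both are determined by the common moments $\Tr\otimes\tau((f^{*}f)^{k})$. The plan is to separate the $\varepsilon$-, $\eta$-, and $i$-dependencies of the covering numbers so cleanly that any iterated $\inf$/$\liminf$/$\limsup$ over those variables, in any order, collapses to $D$.

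Two ingredients drive the argument. First, let $V_i^\eta := \chi_{[0,\eta]}(|\sigma_i(f)|)\ell^{2}_{\RR}(d_{i},\mu_{d_{i}})^{\oplus m}$, a real subspace of dimension $k_i^\eta$; because $\chi_{[0,\eta]}(|\sigma_i(f)|)$ is a self-adjoint projection, its image of $\Ball(\ell^{2}_{\RR}(d_{i},\mu_{d_{i}})^{\oplus m})$ coincides with the ambient-norm unit ball of $V_i^\eta$, which is a $k_i^\eta$-dimensional Euclidean ball. The standard volume comparison (scale invariant, so the normalization $\mu_{d_i}$ is immaterial) provides a universal constant $C \geq 1$ such that for all $0 < \varepsilon \leq 1$,
$$\frac{k_i^\eta}{d_i} \;\leq\; \frac{\log S_\varepsilon(\chi_{[0,\eta]}(|\sigma_{i}(f)|)\Ball(\ell^{2}_{\RR}(d_{i},\mu_{d_{i}})^{\oplus m}))}{d_i \log(1/\varepsilon)} \;\leq\; \frac{k_i^\eta}{d_i}\left(1 + \frac{\log C}{\log(1/\varepsilon)}\right);$$
the bracketed correction factor is independent of both $i$ and $\eta$, and tends to $1$ as $\varepsilon \to 0^{+}$. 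Second, by the very definition of the spectral measure with respect to $\tr \otimes \Tr$ on $M_m(M_{d_i}(\CC))$, one has $k_i^\eta/d_i = \mu_{|\sigma_i(f)|}([0,\eta])$. Lemma \ref{L:weak^{*}} applied with $A_i = \sigma_i(f)$ yields $\mu_{|\sigma_i(f)|} \to \mu_{|f|}$ weak$^{*}$, whence the portmanteau-type inequalities give $\liminf_i \mu_{|\sigma_i(f)|}([0,\eta]) \geq \mu_{|f|}([0,\eta)) \geq D$ for every $\eta > 0$, while $\limsup_i \mu_{|\sigma_i(f)|}([0,\eta]) \leq \mu_{|f|}([0,\eta])$, and $\mu_{|f|}([0,\eta]) \downarrow D$ as $\eta \downarrow 0$ through the co-countable set of non-atoms of $\mu_{|f|}$. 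Combining,
$$\inf_{\eta > 0}\liminf_{i\to\infty}\frac{k_i^\eta}{d_i} \;=\; \inf_{\eta > 0}\limsup_{i\to\infty}\frac{k_i^\eta}{d_i} \;=\; D.$$

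To assemble (ii)--(v), for (ii) and (iii) I fix $\varepsilon$, apply $\liminf_i$ (resp.\ $\limsup_i$) to the sandwich, then $\inf_\eta$; since the correction is $\eta$-independent it factors out of the infimum, producing $D \leq (\cdot) \leq D(1+\log C / \log(1/\varepsilon))$, whence $\liminf_{\varepsilon\to 0}$ or $\limsup_{\varepsilon\to 0}$ squeezes both quantities to $D$. For (iv) and (v) I fix $\eta$ first and let $\varepsilon \to 0^{+}$ inside: since the correction tends to $1$, the iterated $\varepsilon$-then-$i$ liminfs and limsups collapse to $\liminf_i k_i^\eta/d_i$ and $\limsup_i k_i^\eta/d_i$ respectively, and the outer $\inf_\eta$ again returns $D$. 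The only genuine subtlety is the exchange of $\inf_\eta$ with the $\varepsilon$-limit used in (ii)--(iii), which is legitimate precisely because the correction factor in the covering sandwich does not depend on $\eta$; the countably many atoms of $\mu_{|f|}$ are harmlessly avoided by taking the infimum over all $\eta > 0$.
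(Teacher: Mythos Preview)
Your proof is correct and follows essentially the same approach as the paper: reduce the covering number of the projected ball to a volume-packing sandwich in terms of the real dimension of the spectral subspace, identify that dimension (divided by $d_i$) with $\mu_{|\sigma_i(f)|}([0,\eta])$, invoke the weak$^{*}$ convergence of Lemma~\ref{L:weak^{*}} to pin $\inf_{\eta}\liminf_i$ and $\inf_{\eta}\limsup_i$ at $\mu_{|f|}(\{0\})$, and identify this with $\dim_{R(\Gamma)}(\ker\rho(f))$ via the equality of spectral measures for $|\lambda(f)|$ and $|\rho(f)|$. Your write-up is more explicit than the paper's about why the various orderings of $\inf_{\eta}$, $\lim_{\varepsilon}$, and $\lim_i$ all collapse---in particular the observation that the multiplicative correction in the packing bound is $\eta$-independent---but the mathematical content is the same.
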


\begin{proof} Let $d_{\eta,i}=\tr\otimes \Tr(\chi_{[0,\eta]}(|\sigma_{i}(f)|)),$ by a standard volume packing argument
\[\varepsilon^{-d_{i}d_{\eta,i}}\leq S_{\varepsilon}(\chi_{[0,\eta]}(|\sigma_{i}(f)|)\Ball(\ell^{2}_{\RR}(d_{i},\mu_{d_{i}})^{\oplus m})\leq \left(\frac{3+3\varepsilon}{\varepsilon}\right)^{d_{i}d_{\eta,i}}.\]
Since $\mu_{|\sigma_{i}(f)|}\to \mu_{|f|}$ weak$^{*}$
\[\inf_{\eta}\limsup_{i\to \infty}d_{\eta,i}=\mu_{|f|}(\{0\})\]
and
\[\inf_{\eta}\liminf_{i\to \infty}d_{\eta,i}=\mu_{|f|}(\{0\}).\]

	By direct computation,  for any $\alpha\in M_{n,m}(\CC(\Gamma)),$ we have
	\[\Tr\otimes \tau_{R(\Gamma)}((\rho(\alpha)^{*}\rho(\alpha))^{k})=\Tr\otimes\tau_{L(\Gamma)}((\lambda(\alpha)^{*}\lambda(\alpha))^{k})\]
for any $k\in \NN.$ That is,
\[\int_{[0,\infty)} t^{2k}\,d\mu_{|\rho(\alpha)|}(t)=\int_{[0,\infty)}t^{2k}\,d\mu_{|\alpha|}(t)\]
for all $\alpha\in M_{n,m}(\CC(\Gamma)).$ Since $\mu_{|\rho(\alpha)|},\mu_{|\alpha|}$ are both compactly supported, this implies that
\[\mu_{|\rho(\alpha)|}=\mu_{|\alpha|}\]
for all $\alpha\in M_{n,m}(\CC(\Gamma)).$ As the orthogonal projection onto $\ker(\rho(f))$ is $\chi_{\{0\}}(|\rho(f)|),$ we have that
\[\mu_{|f|}(\{0\})=\mu_{|\rho(f)|}(\{0\})=\Tr\otimes \tau_{R(\Gamma)}(\chi_{\{0\}}(|\rho(f)|))=\dim_{R(\Gamma)}(\ker \rho(f)).\]
The first equality following from what we just observed, the second by definition, and the last by the original definition of von Neumann dimension due to Murray and von Neumann as explained in the remarks before Lemma \ref{L:hilred}.

\end{proof}

\subsection{Preliminaries on Mean Dimension}\label{SS:meandimension}

	We recall the definitions from \cite{Li}. For notational purposes, if $\rho$ is a pseudometric on a space $X,$ and $p\in [1,\infty]$ we denote $\rho_{p}$ the pseudometric on $X^{k}$ defined by
\[\rho_{p}(x,y)=\left(\frac{1}{k}\sum_{j=1}^{k}\rho(x_{j},y_{j})^{p}\right)^{1/p},\mbox{ if $p<\infty$}\]
\[\rho_{\infty}(x,y)=\max_{1\leq j\leq k}\rho(x(j),y(j)).\]
We will typically not need to indicate the dependence on $k$ in the notation, as it will be implicit.

\begin{definition} \emph{Let $\Gamma$ be a countable discrete sofic group and $\Sigma$ a sofic approximation of $\Gamma.$ Let $X$ be a compact metrizable space and $\Gamma\actson X.$ Fix a pseudometric $\rho$ on $X.$ For $F\subseteq \Gamma$ finite, $i\in \NN,\delta>0,$ we define $\Map(\rho,F,\delta,\sigma_{i})$ to be the set of all maps $\phi\colon \{1,\cdots,d_{i}\}\to X,$ such that $\rho_{2}(\phi\circ \sigma_{i}(g),g\phi)<\delta,$ for all $g\in F.$}\end{definition}

	Though it will not be used for most of the paper, we recall the notion of mean dimension. This was originally defined by Lindenstrauss and Weiss in \cite{LindWeiss} for amenable groups, and extended to sofic groups by Hanfeng Li in \cite{Li}. We recall the definition for sofic groups.

\begin{definition} \emph{Let $(X,d)$ be a compact metric space. For an open cover $\mathcal{U}$ of $X,$ set}
\[\ord(\mathcal{U})=\sup_{x\in X}-1+\sum_{U\in \mathcal{U}}\chi_{U}(x).\]
\emph{Let $\mathcal{D}(\mathcal{U})$ be $\inf_{\mathcal{V}\succeq \mathcal{U}}\ord(\mathcal{V})$ where $\mathcal{V}\succeq \mathcal{U}$ means that $\mathcal{V}$ refines $\mathcal{U}.$ Now finally, set $\dim(X)=\sup_{\mathcal{U}}\mathcal{D}(\mathcal{U}),$ and call this the }covering dimension\emph{ of $X.$}
\end{definition}

\begin{definition} \emph{Let $X$ be a compact metrizable space with a continuous pseudometric $d.$ We set $\wdim_{\varepsilon}(X,d)$ to be the smallest $k$ such that there is a $k$-dimensional compact topological space $K$ and a continuous map $f\colon X\to K$ such that for all $x,y\in X$ with $f(x)=f(y),$ we have $d(f(x),f(y))<\varepsilon.$}\end{definition}

\begin{definition}\emph{ Let $\Gamma$ be a countable discrete group acting by homeomorphisms on a compact metrizable space $X.$ A continuous pseudometric $\rho$ on $X$ is said to be} dynamically generating \emph{ if for all $x\ne y,$ there is a $g\in \Gamma$ so that $\rho(gx,gy)>0.$}\end{definition}

\begin{definition}\emph{ Let $\Gamma$ be a countable discrete sofic group acting by homeomorphisms on a compact metrizable space $X,$ let $\rho$ be a dynamically generating continuous pseudometric on $X,$ and $\rho'$ a compatible metric on $X.$ Let $\Sigma$ be a sofic approximation of $\Gamma.$  Let $\varepsilon>0.$  For $F\subseteq \Gamma$ finite and,$\delta>0$ set}
\[\mdim_{\Sigma}(\varepsilon,F,\delta,\Gamma)=\limsup_{i\to \infty}	\frac{\wdim_{\varepsilon}(\Map(\rho,F,\delta,\sigma_{i}),\rho'_{\infty})}{d_{i}},\]
\[\mdim_{\Sigma}(\varepsilon)=\inf_{\substack{F\subseteq \Gamma\mbox{\emph{ finite}}\\\delta>0}}\mdim_{\Sigma}(\varepsilon,F,\delta,\Gamma),\]
\[\mdim_{\Sigma}(X,\Gamma)=\sup_{\varespilon>0}\mdim_{\Sigma}(\varepsilon).\]
\end{definition}

	We call $\mdim_{\Sigma}(X,\Gamma)$ the mean dimension of $\Gamma \actson X.$
It is proved in \cite{Li} that this definition is independent of the choice of pseudometric $\rho$ and metric $\rho'.$

	The above definition is different than the definition in \cite{Li}, but the two version are equivalent. For one direction, one considers the Lebesgue number with respect to $\rho'$ of an open cover and applies Proposition 2.4 in \cite{LindWeiss}. For the other direction, one uses an open cover by balls of radius $\varepsilon$ in the $\rho'$ pseudometric and applies Proposition 2.4 in \cite{LindWeiss}. It will be easier for us to work with $\varepsilon$-widths, so that is what we will do throughout the paper.The closely related notion of \emph{metric} mean dimension is the one of main importance for our results. We consider an extended notion which will be relevant for our purposes.

\begin{definition}\emph{ Let $\Gamma$ be a countable discrete sofic group acting by homeomorphisms on a compact metrizable space $X,$ let $\rho$ be a dynamically generating continuous pseudometric on $X,$ and $\Sigma$ a sofic approximation of $\Gamma.$ For $F\subseteq \Gamma$ finite, $\varepsilon>0$ and $\delta>0,$ set}\end{definition}
\[\mdim_{\Sigma,p}(F,\delta,\varepsilon,\rho)=\limsup_{i\to \infty}\frac{\log S_{\varepsilon}(\Map(\rho,F,\delta,\sigma_{i}),\rho_{p})}{d_{i}\log(1/\varepsilon)},\]
\[\mdim_{\Sigma,p}(\varespilon,\rho)=\inf_{\substack{F\subseteq \Gamma \mbox{\mbox{finite}}\\ \delta>0}}\mdim_{\Sigma,p}(F,\delta,\varepsilon,\rho),\]
\[\mdim_{\Sigma,p}(X,\rho)=\liminf_{\varepsilon\to 0}\mdim_{\Sigma,p}(\varespilon,\rho).\]
Finally, we define the sofic $p$-metric mean dimension by
\[\mdim_{\Sigma,M,p}(X,\Gamma)=\inf_{\rho}\mdim_{\Sigma,p}(X,\rho),\]
with the infimum being over all dynamically generating continuous pseudometrics $\rho$ on $X.$ For $p=\infty,$ this is metric mean dimension as defined in \cite{Li}, and this will sometimes be denoted $\mdim_{\Sigma,M}(X,\Gamma)$ instead of $\mdim_{\Sigma,M,p}(X,\Gamma).$  Li actually works with $N_{\varepsilon}(A,d)$ which is the largest cardinality of an $\varepsilon$-separated subset of $A$ (we say $B\subseteq A$ is $\varepsilon$-separated if $d(x,y)\geq \varespilon$ if $x,y\in B$ and $x \ne y$) but it is easy to show that
\[N_{2\varepsilon}(A,d)\leq S_{\varespilon}(A,d)\leq N_{\varepsilon}(A,d),\]
 so working with $\varepsilon$-dense subsets instead of $\varepsilon$-separated subsets is simply a matter of taste. Our arguments will work better with $\varepsilon$-dense subsets, so we will use that throughout the paper.

 In \cite{Li}, it is shown that metric mean dimension  equals the infimum as above over all metrics $\rho$ compatible with the topology on $X.$ We can show the same for $p$-metric mean dimension. Before doing this, however we would like to isolate a key Lemma. This Lemma appears as Lemma 2.3 in \cite{KerrLi2}. Using this Lemma, one can also rephrase the proofs of Theorem 2.6 and Theorem 4.5 in \cite{KLi} in a manner entirely similar to the proof of invariance in section 2 of \cite{Me}, and in fact this was the inspiration for the proof in section 2 of \cite{Me}.

\begin{lemma}\label{L:heart} Let $\Gamma$ be a countable discrete sofic group acting by homeomorphisms on a compact metrizable space $X,$ and let $\Sigma$ be a  sofic approximation of $\Gamma.$ Let $\rho,\rho'$ be two dynamically generating continuous pseudometrics on $X.$ Then for all $F\subseteq \Gamma$ finite, $\delta>0,$ there is an $F'\subseteq \Gamma$ finite, a $\delta'>0$ so that for all large $i,$
\[\Map(\rho',F',\delta',\sigma_{i})\subseteq \Map(\rho,F,\delta,\sigma_{i}).\]
\end{lemma}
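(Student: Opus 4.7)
The plan is to reduce the lemma to a pointwise comparison statement between $\rho$ and $\rho'$ along finite orbit pieces, and then transport that pointwise estimate to an $\ell^{2}$ estimate on approximately equivariant maps, using the sofic property to handle the transition. Let $M = 1 + \max(\diam_{\rho}(X),\diam_{\rho'}(X))$, which is finite by continuity of the pseudometrics and compactness of $X$. The heart of the argument is the following claim: for every $\eta>0$ there exist $\delta_{0}>0$ and a finite $K\subseteq\Gamma$ such that
\[
\max_{h\in K}\rho'(hx,hy)<\delta_{0} \;\Longrightarrow\; \rho(x,y)<\eta \text{ for all } x,y\in X.
\]
If this failed, an exhaustion $K_{n}\nearrow\Gamma$ by finite sets and points $x_{n},y_{n}\in X$ with $\max_{h\in K_{n}}\rho'(hx_{n},hy_{n})<1/n$ and $\rho(x_{n},y_{n})\geq \eta$ would, after passing to a convergent subsequence $(x_{n},y_{n})\to (x,y)$, give $x\neq y$ (since $\rho(x,y)\geq \eta>0$) together with $\rho'(hx,hy)=0$ for every $h\in\Gamma$, contradicting that $\rho'$ is dynamically generating.

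Given $F,\delta$, set $\eta=\delta/3$, apply the claim to obtain $\delta_{0},K$, and let $F'=F\cup K\cup KF$ together with $\delta'>0$ to be chosen. For $\phi\in\Map(\rho',F',\delta',\sigma_{i})$, $g\in F$, and $h\in K$, the triangle inequality gives
\[
\rho'(h\phi(\sigma_{i}(g)(k)),hg\phi(k))\leq \rho'(h\phi(\sigma_{i}(g)(k)),\phi(\sigma_{i}(h)\sigma_{i}(g)(k)))+\rho'(\phi(\sigma_{i}(h)\sigma_{i}(g)(k)),\phi(\sigma_{i}(hg)(k)))+\rho'(\phi(\sigma_{i}(hg)(k)),hg\phi(k)).
\]
Squaring, averaging over $k$, and applying $(a+b+c)^{2}\leq 3(a^{2}+b^{2}+c^{2})$: the first summand averages to $\rho'_{2}(h\phi,\phi\circ\sigma_{i}(h))^{2}<(\delta')^{2}$ (because $\sigma_{i}(g)$ permutes coordinates and $h\in F'$); the third is $<(\delta')^{2}$ (because $hg\in F'$); the middle is bounded by $M^{2}\cdot|\{k:\sigma_{i}(h)\sigma_{i}(g)(k)\neq\sigma_{i}(hg)(k)\}|/d_{i}$, which tends to $0$ as $i\to\infty$ by the multiplicativity property of soficity. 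Hence for $i$ large enough (depending only on $F,K$) and all $g\in F$, $h\in K$, one has $\rho'_{2}(h\phi\circ\sigma_{i}(g),hg\phi)^{2}<9(\delta')^{2}$.

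For fixed $g\in F$, let $B_{g}=\{k:\max_{h\in K}\rho'(h\phi(\sigma_{i}(g)(k)),hg\phi(k))\geq \delta_{0}\}$. A union bound over $h\in K$ together with Markov's inequality gives $|B_{g}|/d_{i}\leq 9|K|(\delta')^{2}/\delta_{0}^{2}$ for all large $i$. By the key claim, on $\{1,\ldots,d_{i}\}\setminus B_{g}$ we have $\rho(\phi(\sigma_{i}(g)(k)),g\phi(k))<\eta$, while on $B_{g}$ this quantity is bounded by $M$. Hence
\[
\rho_{2}(\phi\circ\sigma_{i}(g),g\phi)^{2}\leq \eta^{2}+M^{2}|B_{g}|/d_{i}\leq (\delta/3)^{2}+9|K|M^{2}(\delta')^{2}/\delta_{0}^{2},
\]
which is $<\delta^{2}$ once $\delta'$ is chosen small enough depending on $\delta,M,K,\delta_{0}$. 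This yields $\phi\in\Map(\rho,F,\delta,\sigma_{i})$ for all sufficiently large $i$. The main obstacle is the first paragraph: since $\rho,\rho'$ are only pseudometrics and need not be comparable on $X$ itself, the pointwise comparison must be obtained via a non-quantitative compactness/diagonal argument that genuinely exploits the dynamically generating hypothesis on $\rho'$; once that is in hand, the $\ell^{2}$ manipulation in paragraphs two and three is standard sofic-approximation bookkeeping.
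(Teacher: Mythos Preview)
Your proof is correct. The paper itself does not supply a proof of this lemma; it simply records that the statement appears as Lemma~2.3 in \cite{KerrLi2}. Your argument follows the standard route for such results: a compactness argument exploiting the dynamically generating hypothesis on $\rho'$ to obtain a uniform pointwise comparison between $\rho$ and $\rho'$ along finite $K$-orbits, followed by the expected $\ell^{2}$/Chebyshev bookkeeping on the sofic side. This is essentially the approach one finds in Kerr--Li, so there is nothing to contrast.
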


	The case $q=\infty$ of the next Lemma is more difficult, and is Lemma 4.4 in \cite{Li}. We will only need the case $1\leq q<\infty.$

\begin{lemma} Let $\Gamma$ be a countable discrete sofic group acting by homeomorphisms on a compact metrizable space $X,$ and let $\Sigma$ be a sofic approximation of $\Gamma,$ then for all $1\leq q<\infty,$
\[\mdim_{\Sigma,M,q}(X,\Gamma)=\inf_{\rho}\mdim_{\Sigma,q}(X,\rho),\]
where the infimum is over all compatible metrics $\rho$ on $X.$
\end{lemma}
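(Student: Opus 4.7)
The containment $\mdim_{\Sigma, M, q}(X, \Gamma) \leq \inf_{\rho \text{ compatible metric}} \mdim_{\Sigma, q}(X, \rho)$ is immediate, since any compatible metric on $X$ is in particular a dynamically generating continuous pseudometric. The task is thus to prove the reverse inequality: given an arbitrary dynamically generating continuous pseudometric $\rho$ on $X$, produce compatible metrics $\rho^{\ast}$ realizing arbitrarily good upper bounds on $\mdim_{\Sigma, q}(X, \rho)$.

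The plan is to use an additive perturbation. Fix a compatible metric $d_0$ on $X$ and rescale so that $\diam(X, d_0) \leq 1$. For each $\epsilon > 0$, set $\rho_\epsilon := \rho + \epsilon d_0$. Because $\rho_\epsilon$ is continuous, separates points (thanks to $d_0$), and $X$ is compact, $\rho_\epsilon$ is a compatible metric on $X$. Two elementary comparisons hold pointwise: $\rho \leq \rho_\epsilon$, and $\rho_\epsilon \leq \rho + \epsilon$. By Minkowski's inequality these upgrade to $\rho_q \leq (\rho_\epsilon)_q$ and $(\rho_\epsilon)_q \leq \rho_q + \epsilon$ on tuples in $X^{d_i}$. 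From the first comparison we also obtain the inclusion $\Map(\rho_\epsilon, F, \delta, \sigma_i) \subseteq \Map(\rho, F, \delta, \sigma_i)$.

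Combining these ingredients, for any $F \subseteq \Gamma$ finite, $\delta > 0$, $\eta > 0$, and $\epsilon > 0$, one obtains
\[
S_{2(\eta+\epsilon)}\bigl(\Map(\rho_\epsilon, F, \delta, \sigma_i), (\rho_\epsilon)_q\bigr)
\leq S_{\eta+\epsilon}\bigl(\Map(\rho, F, \delta, \sigma_i), (\rho_\epsilon)_q\bigr)
\leq S_\eta\bigl(\Map(\rho, F, \delta, \sigma_i), \rho_q\bigr),
\]
where the first step combines the set inclusion with (\ref{E:canonicalperturbationinequality}) at $\delta = 0$, and the second uses that any $\rho_q$-$\eta$-dense subset of a set $B$ is $(\rho_\epsilon)_q$-$(\eta+\epsilon)$-dense in $B$ by the triangle inequality together with $(\rho_\epsilon)_q \leq \rho_q + \epsilon$. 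Taking logarithms, dividing by $d_i \log(1/(2(\eta+\epsilon)))$, and passing to $\limsup_i$ followed by $\inf_{F,\delta}$ gives the scale-comparison estimate
\[
\mdim_{\Sigma, q}\bigl(2(\eta+\epsilon), \rho_\epsilon\bigr)
\leq \mdim_{\Sigma, q}(\eta, \rho) \cdot \frac{\log(1/\eta)}{\log(1/(2(\eta+\epsilon)))}.
\]

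To finish, I would couple the perturbation $\epsilon$ to the scale $\eta$ by setting $\epsilon = \eta$, so that $2(\eta+\epsilon) = 4\eta \to 0$ as $\eta \to 0$ and the logarithmic ratio tends to $1$. Choosing a subsequence $\eta_k \to 0$ realizing the liminf defining $\mdim_{\Sigma, q}(X, \rho)$ produces compatible metrics $\rho_{\eta_k}$ satisfying $\mdim_{\Sigma, q}(4\eta_k, \rho_{\eta_k}) \leq \mdim_{\Sigma, q}(\eta_k, \rho)(1+o(1)) \to \mdim_{\Sigma, q}(X, \rho)$, which one then leverages to bound $\inf_{\rho^{\ast} \text{ compatible}} \mdim_{\Sigma, q}(X, \rho^{\ast})$ above by $\mdim_{\Sigma, q}(X, \rho)$. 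The main obstacle is this last coupling: since $\mdim_{\Sigma, q}(X, \rho_\epsilon)$ is defined as a $\liminf$ over scales $\eta' \to 0$, the perturbation bound above controls only a specific scale $\eta' = 4\eta_k$ rather than the liminf itself. Handling this requires a diagonal argument that selects a compatible metric and a scale in tandem for each $k$, exploiting the uniform convergence $\rho_\epsilon \to \rho$ as $\epsilon \to 0$; the averaging nature of the $\ell^q$-norms for $1 \leq q < \infty$ makes this transfer cleaner than in the $\ell^{\infty}$ case treated in Lemma 4.4 of \cite{Li}, where a separate analysis on a measure-large subset is required.
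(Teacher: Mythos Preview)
Your argument has a genuine gap, and the diagonal manoeuvre you describe at the end cannot close it. The quantity you must bound is
\[
\inf_{\rho^{\ast}\text{ compatible}}\ \liminf_{\eta'\to 0}\ \mdim_{\Sigma,q}(\eta',\rho^{\ast}),
\]
so a single compatible metric $\rho^{\ast}$ must be fixed \emph{before} sending the scale to zero; one is not permitted to couple the metric to the scale. Your estimate
\[
\mdim_{\Sigma,q}\bigl(2(\eta+\epsilon),\rho_{\epsilon}\bigr)\leq \mdim_{\Sigma,q}(\eta,\rho)\cdot\frac{\log(1/\eta)}{\log\bigl(1/(2(\eta+\epsilon))\bigr)}
\]
only controls $\mdim_{\Sigma,q}(\eta',\rho_{\epsilon})$ at scales $\eta'\geq 2\epsilon$. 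For a \emph{fixed} $\epsilon$ the $\liminf$ defining $\mdim_{\Sigma,q}(X,\rho_{\epsilon})$ is governed by scales $\eta'\ll\epsilon$, precisely the regime in which the additive term $\epsilon\, d_{0}$ dominates and your comparison says nothing. In fact, since $\rho\leq\rho_{\epsilon}$, the same reasoning used for the ``easy'' inequality (together with Lemma~\ref{L:heart}) gives $\mdim_{\Sigma,q}(X,\rho)\leq \mdim_{\Sigma,q}(X,\rho_{\epsilon})$ for every $\epsilon>0$, so your perturbed family cannot in general drop below $\mdim_{\Sigma,q}(X,\rho)$; it simply sits above it. The uniform convergence $\rho_{\epsilon}\to\rho$ does not help here, because covering numbers at scale $\eta'\ll\epsilon$ are sensitive to the $\epsilon d_{0}$ term no matter how small $\epsilon$ is.

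The paper avoids this problem by constructing a \emph{single} compatible metric that is dynamically adapted to $\rho$: enumerate $\Gamma=\{s_{1},s_{2},\dots\}$ with $s_{1}=e$ and set
\[
\rho'(x,y)=\sum_{n\geq 1}2^{-n}\rho(s_{n}x,s_{n}y).
\]
The point is that $\rho'$ is built out of $\rho$ along the group action, so on $\Map(\rho,F,\delta,\sigma_{i})$ one can trade $\rho'_{q}$ for $\rho_{q}$ up to an additive error using the almost-equivariance condition itself (truncate the series, then use $\rho_{q}(s_{n}\phi,\phi\circ\sigma_{i}(s_{n}))<\varepsilon$ on a large set). This yields $\rho'_{q}(\phi,\psi)\leq C\varepsilon+\rho_{q}(\phi,\psi)$ uniformly on $\Map(\rho,F,\delta,\sigma_{i})$, which is exactly the scale-independent comparison your additive perturbation $\rho+\epsilon d_{0}$ cannot provide. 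This is why the construction is the same as in Lemma~4.4 of \cite{Li}; the $\ell^{q}$ case is easier only in that Minkowski replaces the measure-large-set argument, not in the choice of metric.
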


\begin{proof} This is entirely similar to the proof of Lemma 4.4 in \cite{Li}, again we repeat the proof for the reader's convenience. Fix a dynamically generating continuous pseudometric $\rho$ on $X.$ Enumerate the elements of $\Gamma$ as $s_{1},s_{2},\cdots$  with $s_{1}=e,$ and let $\rho'$ be the metric on $X$ defined by
\[\rho'(x,y)=\sum_{n=1}^{\infty}\frac{1}{2^{n}}\rho(s_{n}x,s_{n}y),\]
then $\rho'$ is a compatible metric on $X.$ To prove the Lemma, it is enough to show that
\[\mdim_{\Sigma,q}(X,\rho)=\mdim_{\Sigma,q}(X,\rho').\]

	By the preceding Lemma, for every $F'\subseteq \Gamma$ finite, $\delta'>0$ there is an $F\subseteq \Gamma$ finite, $\delta>0$ so that
\[\Map(\rho,F,\delta,\sigma_{i})\subseteq \Map(\rho',F',\delta',\sigma_{i})\]
for all large $i.$ As $\rho\leq \rho',$ we have that
\[S_{2\varepsilon}(\Map(\rho,F,\delta,\sigma_{i}),\rho_{q})\leq S_{2\varepsilon}(\Map(\rho,F,\delta,\sigma_{i}),\rho'_{q})\leq S_{\varepsilon}(\Map(\rho,F',\delta',\sigma_{i}),\rho'_{q}).\]
Thus
\[\mdim_{\Sigma,q}(X,\rho)\leq \mdim_{\Sigma,q}(X,\rho').\]

	For the reverse inequality, set $M=\sup_{x,y\in X}\rho(x,y).$ Fix $\varepsilon>0,$ and take $k\in \NN$ so that $2^{-k}M<\varepsilon.$ Let $F\supseteq \{s_{1},\cdots,s_{k}\}$ be a finite subset of $\Gamma,$ and $\delta>0.$ We may assume that $\delta<1.$ Then by the preceding Lemma, there is a finite set $F'\subseteq \Gamma,$ and $\delta'>0$ so that for all large $i$ we have $\Map(\rho,F,\delta,\sigma_{i})\supseteq \Map(\rho',F',\delta',\sigma_{i}).$
For $x,y\in X,$
\[\rho'(x,y)\leq \varepsilon+\sum_{n=1}^{k}\frac{1}{2^{n}}\rho(s_{n}x,s_{n}y).\]
Now fix $\phi,\psi\in \Map(\rho,F,\delta,\sigma_{i}).$ By Minkowski's inequality,
\[\rho'_{q}(\phi,\psi)\leq \varespilon+ \sum_{n=1}^{k}\frac{1}{2^{n}}\rho_{q}(s_{n}\phi,s_{n}\psi).\]

	Since $\phi\in \Map(\rho,F,\delta,\sigma_{i})$ for $1\leq n\leq k$ we have that $\rho(s_{n}\phi(j),\phi(\sigma_{i}(s_{n})j))<\sqrt{\delta}$ for at least $(1-\delta)d_{i}$ of the $j.$  Thus
\[\rho_{q}^{q}(s_{n}\phi,\phi\circ \sigma_{i}(s_{n}))\leq \delta^{q/2}M(1-\delta)+\delta M^{q},\]
and this can be made less that $\varespilon^{q}$ if $\delta$ is small enough. Thus
\[\rho_{q}(s_{n}\phi,s_{n}\psi)\leq 2\varepsilon+\rho_{q}(\phi\circ \sigma_{i}(s_{n}),\psi\circ \sigma_{i}(s_{n}))=2\varepsilon+\rho_{q}(\phi,\psi).\]
Hence,
\[\rho'_{q}(\phi,\psi)\leq 3\varepsilon+\rho_{q}(\phi,\psi).\]
Therefore,
\[S_{8\varepsilon}(\Map(\rho',F',\delta',\sigma_{i}),\rho'_{p})\leq S_{\varespilon}(\Map(\rho,F,\delta,\sigma_{i}),\rho_{q}),\]
and this is enough to complete the proof.

\end{proof}

\begin{lemma} Let $\mu_{n}$ be the uniform probability measure on $\{1,\cdots,n\}.$ There is a $C>0,$ so that if $m,n\in \NN,$ and $S\subseteq \{1,\cdots,m\}\times \{1,\cdots,n\},$ then
\[\vol_{\RR^{S}}(\Ball(l^{\infty}_{\RR}(m,l^{1}_{\RR}(n,\mu_{n})))\cap \RR^{S})^{1/|S|}\leq C\frac{nm}{|S|}.\]

\end{lemma}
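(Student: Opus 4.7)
The plan is to reduce the computation to a row-by-row product of ordinary $\ell^{1}$-ball volumes, then apply Stirling and Jensen.

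First I would identify the unit ball. A matrix $A\in\RR^{m\times n}$ satisfies $\|A\|_{\ell^{\infty}(m,\ell^{1}(n,\mu_{n}))}\leq 1$ iff $\max_{i}\tfrac{1}{n}\sum_{j}|A_{ij}|\leq 1$, i.e.\ $\sum_{j}|A_{ij}|\leq n$ for every row $i$. For $S\subseteq\{1,\dots,m\}\times\{1,\dots,n\}$ set $S_{i}=\{j:(i,j)\in S\}$ and $s_{i}=|S_{i}|$. Then, inside $\RR^{S}$, the body in question splits as the direct product over $i$ (with $s_{i}>0$) of the $\ell^{1}$-ball of radius $n$ in $\RR^{S_{i}}$. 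Using the standard formula $\vol\{x\in\RR^{k}:\|x\|_{1}\leq r\}=(2r)^{k}/k!$, we obtain
\[
\vol_{\RR^{S}}\bigl(\Ball(\ell^{\infty}_{\RR}(m,\ell^{1}_{\RR}(n,\mu_{n})))\cap\RR^{S}\bigr)=\prod_{i:\, s_{i}>0}\frac{(2n)^{s_{i}}}{s_{i}!}.
\]

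Next I would apply the elementary Stirling lower bound $s_{i}!\geq (s_{i}/e)^{s_{i}}$ (valid for $s_{i}\geq 1$) to get
\[
\prod_{i:\, s_{i}>0}\frac{(2n)^{s_{i}}}{s_{i}!}\;\leq\;\prod_{i:\, s_{i}>0}\left(\frac{2en}{s_{i}}\right)^{s_{i}}.
\]
Taking the $|S|$-th root and writing $p_{i}=s_{i}/|S|$ (so that $\sum_{i}p_{i}=1$ over those $i$ with $s_{i}>0$), this becomes
\[
\vol^{1/|S|}\leq \exp\!\left(\sum_{i:\, s_{i}>0}p_{i}\log\frac{2en}{p_{i}|S|}\right).
\]

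Finally I would apply Jensen's inequality (concavity of $\log$) to the probability measure $(p_{i})$ to conclude
\[
\sum_{i:\, s_{i}>0}p_{i}\log\frac{2en}{p_{i}|S|}\leq \log\!\left(\sum_{i:\, s_{i}>0}p_{i}\cdot\frac{2en}{p_{i}|S|}\right)=\log\!\left(\frac{2en\,|\{i:s_{i}>0\}|}{|S|}\right)\leq \log\frac{2enm}{|S|}.
\]
Exponentiating gives $\vol^{1/|S|}\leq \tfrac{2enm}{|S|}$, so $C=2e$ works. There is no real obstacle — the only subtlety is handling rows with $s_{i}=0$ (which simply contribute no volume factor and are omitted from the sum), and checking that Stirling's inequality is used in the correct direction so that the final bound is an upper bound.
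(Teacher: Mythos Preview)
Your proof is correct and follows essentially the same route as the paper: decompose $S$ row-by-row so that the body factors as a product of $\ell^{1}$-balls, bound each factor, and then combine via a weighted AM--GM/Jensen step. The only cosmetic differences are that the paper quotes the bound $\vol(\Ball(\ell^{1}_{\RR}(k,\mu_{k})))^{1/k}\leq C$ from Pisier rather than using the exact formula $(2r)^{k}/k!$ together with Stirling, and it phrases the convexity step as the Arithmetic--Geometric Mean inequality rather than Jensen; your version has the minor advantage of giving the explicit constant $C=2e$.
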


\begin{proof} It is known that there is a  $C>0$  so that
\[\vol(\Ball(l_{\RR}^{1}(k,\mu_{k})))\leq C^{k}\]
for all $k\in \NN$ (this is implied by the calculation on page 11 of \cite{Pis}).
	
	Write $S$ as
\[S=\bigcup_{j}\{j\}\times B_{j},\]
with $B_{j}\subseteq \{1,\cdots,n\}.$ Then
\[\Ball(l^{\infty}_{\RR}(m,l^{1}_{\RR}(n,\mu_{n})))\cap \RR^{S}=\prod_{j=1}^{m}\Ball(l^{1}_{\RR}(B_{j},\mu_{n}))=\prod_{j=1}^{m}\frac{n}{|B_{j}|}\Ball(l^{1}_{\RR}(B_{j},\mu_{B_{j}})).\]
Hence
\begin{align*}
\vol_{\RR^{S}}(\Ball(l^{\infty}_{\RR}(m,l^{1}_{\RR}(n,\mu_{n}))\cap \RR^{S}))^{1/|S|}&\leq C\prod_{j=1}^{m}\left(\frac{n}{|B_{j}|}\right)^{\frac{|B_{j}|}{|S|}}\\
&\leq C\sum_{j=1}^{m}\frac{n}{|S|}\\
&=C\frac{nm}{|S|},
\end{align*}
by the Arithmetic-Geometric Mean inequality.

\end{proof}

\begin{lemma} Let $X$ be a compact metrizable space with compatible metric $\rho,$ and  let $\mathcal{U}$ be a finite open cover of $X.$  Let $f_{U}\colon X\to [0,1],$ be a family of Lipschitz functions indexed by elements of $\mathcal{U}$ such that
\[\max_{U\in \mathcal{U}}f_{U}(x)=1,\]
for all $x\in X$ and
\[f_{U}(x)=0\mbox{ for $x\in X\setminus U$}\]
for all $U\in\mathcal{U}.$  (See \cite{Li} Lemma 6.2 for the existence of such a family). Define
\[\Psi_{i}\colon X^{d_{i}}\to [0,1]^{\{1,\dots,d_{i}\}\times \mathcal{U}},\]
by
\[\Psi_{i}(x)(j,U)=f_{U}(x(j)).\]
For every $\alpha,\eta>0,$ there is a $0<\varepsilon_{0}$ satisfying the following. Suppose $0<\varepsilon<\varepsilon_{0},$ and $\Xi_{i}\subseteq X^{d_{i}}$ satisfies
\[\alpha\geq\limsup_{i\to \infty}\frac{\log S_{\varepsilon}(\Xi_{i},\rho_{1})}{d_{i}\log (1/\varespilon)}.\]
	Then for all large $i,$ we can find a $\xi\in (0,1)^{[d_{i}]\times \mathcal{U}},$ so that for all  $S\subseteq \{1,\cdots,d_{i}\}\times \mathcal{U},$ with $|S|\geq (\alpha+\eta)d_{i},$
\[\xi\big|_{S}\notin \Psi_{i}(\Xi_{i})\big|_{S}.\]
\end{lemma}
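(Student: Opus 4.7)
The plan is a volume-packing random argument: pick $\xi$ uniformly at random from $[0,1]^{[d_i]\times \mathcal{U}}$ and bound the Lebesgue measure of the ``bad'' event that $\xi|_S \in \Psi_i(\Xi_i)|_S$ for some $S$ with $|S| \geq (\alpha+\eta)d_i$. Since $\{0,1\}$ has measure zero, any good $\xi$ can be chosen inside $(0,1)^{[d_i]\times\mathcal{U}}$.

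First I would fix an $\varepsilon$-dense subset $E_i \subseteq \Xi_i$ in $\rho_1$ of cardinality $S_\varepsilon(\Xi_i, \rho_1)$; by the $\limsup$ hypothesis $|E_i| \leq \varepsilon^{-(\alpha+\eta/3)d_i}$ for all large $i$. Let $L$ be a common Lipschitz constant for all $f_U$. Then for every $x \in \Xi_i$ one can pick $y \in E_i$ with $\rho_1(x,y) < \varepsilon$, and Lipschitzness gives
\[ \frac{1}{d_i}\sum_{j=1}^{d_i} |f_U(x_j) - f_U(y_j)| < L\varepsilon \qquad \text{for each } U\in\mathcal{U}. \]
Writing $\|v\|_S = \max_{U \in \mathcal{U}} \frac{1}{d_i}\sum_{j:(j,U)\in S}|v(j,U)|$ for the restriction of the $\ell^\infty_\mathcal{U}(\ell^1_{[d_i], \mu_{d_i}})$ norm to $\RR^S$, this forces $\|\xi|_S - \Psi_i(y)|_S\|_S < L\varepsilon$ for some $y\in E_i$ whenever $\xi|_S \in \Psi_i(\Xi_i)|_S$. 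Hence the bad event is contained in $\bigcup_{S,y} B(S,y)$, with $B(S,y) = \{\xi : \|\xi|_S - \Psi_i(y)|_S\|_S \leq L\varepsilon\}$ and the union ranging over $|S| \geq (\alpha+\eta)d_i$, $y \in E_i$.

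The preceding volume lemma (with $n = d_i$, $m = |\mathcal{U}|$) gives $\vol(B(S,y)) \leq (CL\varepsilon|\mathcal{U}|/(\alpha+\eta))^{|S|}$ once $|S| \geq (\alpha+\eta)d_i$. Using the standard estimate $\binom{N}{k} \leq (eN/k)^k$ (Euler's $e$), for $k \geq (\alpha+\eta)d_i$ the combined contribution from all $S$ of size $k$ is at most $(M\varepsilon)^k$, where $M = eCL|\mathcal{U}|^2/(\alpha+\eta)^2$ depends only on $\alpha,\eta,\mathcal{U},L$. For $\varepsilon < 1/(2M)$ the geometric sum is at most $2(M\varepsilon)^{(\alpha+\eta)d_i}$, so
\[ \PP(\text{bad}) \leq 2|E_i|(M\varepsilon)^{(\alpha+\eta)d_i} \leq 2\bigl(M^{\alpha+\eta}\varepsilon^{2\eta/3}\bigr)^{d_i}, \]
which tends to $0$ once additionally $M^{\alpha+\eta}\varepsilon^{2\eta/3} < 1$. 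Defining $\varepsilon_0$ by these two inequalities, for all $\varepsilon < \varepsilon_0$ and all large $i$ the good set has positive Lebesgue measure, producing the desired $\xi \in (0,1)^{[d_i]\times\mathcal{U}}$. The main obstacle is precisely this balancing: the exponential number of candidate subsets $S$ (in $d_i|\mathcal{U}|$) has to be dominated by the exponential shrinkage in $\varepsilon$ coming from the volume estimate, which is why $\varepsilon_0$ must be chosen small depending on $\mathcal{U}$ and $L$ in addition to $\alpha$ and $\eta$.
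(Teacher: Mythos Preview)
Your proof is correct and follows essentially the same approach as the paper: a volume-counting argument showing the Lebesgue measure of the ``bad'' set of $\xi$ is strictly less than $1$, using the preceding ball-volume lemma to control each $\Psi_i(\Xi_i)|_S$ and the $\varepsilon$-dense set $E_i$ to discretize. The only cosmetic difference is that you count the subsets $S$ via the binomial estimate $\binom{N}{k}\leq (eN/k)^k$ and sum geometrically, whereas the paper uses the cruder bound $2^{d_i|\mathcal{U}|}$ for the total number of subsets together with $A^{|S|}\leq A^{d_i|\mathcal{U}|}$; both lead to the same conclusion.
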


\begin{proof} This proof is essentially the same as Lemma 6.3 in \cite{Li}, but we shall repeat the proof for the reader's convenience. The main tool to adapt the proof is the preceding Lemma. Concretely, let $L>1$ be a bound for the Lipschitz seminorms of $f_{U}$ for $U\in \mathcal{U}.$ Let $\varepsilon_{0}$  be sufficiently small in a manner to be determined later. Let $0<\varepsilon<\varepsilon_{0},$ and $\Xi_{i}\subseteq X^{d_{i}}$ with
\[\alpha\geq\limsup_{i\to \infty}\frac{\log S_{\varepsilon}(\Xi_{i},\rho_{1})}{d_{i}\log (1/\varespilon)}.\]
Then for all large $i,$
\[S_{\varepsilon}(\Xi_{i},\rho_{1})\leq \varepsilon^{-d_{i}(\alpha+\frac{\eta}{(2015)!})}.\]

	Thus $\Psi_{i}$ can be covered by at most $\varepsilon^{-d_{i}(\alpha+\frac{\eta}{(2015)!})}$ balls of radius $L\varepsilon$ in the $\|\cdot\|_{l^{\infty}(\mathcal{U},l^{1}(d_{i},\mu_{d_{i}}))}$-norm. Let $C$ be as in the above Lemma, (note $C>1$), and fix $S\subseteq \mathcal{U}\times \{1,\cdots,d_{i}\}$ with $|S|\geq (\alpha+\eta)d_{i}.$ Then
\[\Psi_{i}(\Xi_{i})\big|_{S}\subseteq [0,1]^{S},\]
can be covered by at most  $\varepsilon^{-d_{i}(\alpha+\frac{\eta}{(2015)!})}$ balls of radius $L\varepsilon$ in the $\|\cdot\|_{l^{\infty}(\mathcal{U},l^{1}(d_{i},\mu_{d_{i}}))}$-norm. By the preceding Lemma,
\begin{align*}
\vol_{\RR^{S}}(\Psi_{i}(\Xi_{i})\big|_{S})\leq &\left(\frac{|\mathcal{U}|CL\varepsilon}{\alpha+\eta}\right)^{|S|}\varepsilon^{-d_{i}(\alpha+\frac{\eta}{(2015)!})}\\
&\leq \left(\frac{|\mathcal{U}|CL}{\alpha+\eta}\right)^{|S|}\varepsilon^{d_{i}(\eta-\frac{\eta}{(2015)!})}.
\end{align*}
Set
\[A=\max\left(\frac{|\mathcal{U}|CL}{\alpha+\eta},1\right).\]
Let $Q$ be the set of all $\xi\in (0,1)^{\{1,\dots,d_{i}\}\times \mathcal{U}}$ so that $\xi\big|_{S}\in \Psi_{i}(\Xi_{1})\big|_{S}$ for  some $S$ with $|S|\geq (\alpha+\eta)d_{i}.$ Then
\[\vol(Q)^{1/d_{i}}\leq A^{|\mathcal{U}|}\varespilon^{\eta-\frac{\eta}{(2015)!}}2^{|\mathcal{U}|}.\]
If $\varepsilon_{0}$ is sufficiently small (depending only on $\mathcal{U},\alpha,\eta,L$) then $\vol(Q)<1$ for all large $i.$

\end{proof}

\begin{lemma}\label{L:subsetsofproducts} Let $X$ be a compact metrizable space with a compatible metric $\rho,$ and let $\rho'$ be a continuous pseudometric on $X.$ Let $\delta>0.$ Let $d_{i}$ be a sequence of integers going to infinity, let $J$ be a  set and  $\Xi_{i}(\beta)\subseteq X^{d_{i}},$ for $\beta\in J.$ Then
\[\inf_{\beta}\limsup_{i\to \infty}\frac{\wdim_{\delta}(\Xi_{i}(\beta),\rho'_{\infty})}{d_{i}}\leq \liminf_{\varepsilon\to 0}\inf_{\beta} \limsup_{i\to \infty}\frac{\log S_{\varepsilon}(\Xi_{i}(\beta),\rho_{1})}{d_{i}\log (1/\varepsilon)}.\]
\end{lemma}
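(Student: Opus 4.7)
Let $\alpha$ denote the right-hand side, which we may assume to be finite. Fix $\eta>0$; it suffices to show the left-hand side is at most $\alpha+2\eta$ and then let $\eta\to 0$. By uniform continuity of $\rho'$ with respect to $\rho$, choose a finite open cover $\mathcal{U}$ of $X$ with each $U\in\mathcal{U}$ of $\rho'$-diameter less than $\delta/2$. Using Lemma 6.2 of \cite{Li}, pick $\rho$-Lipschitz functions $f_{U}\colon X\to [0,1]$, $U\in\mathcal{U}$, with common Lipschitz constant $L\geq 1$, satisfying $\max_{U}f_{U}\equiv 1$ and $f_{U}\equiv 0$ off $U$. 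This sets up the maps $\Psi_{i}$ of the preceding lemma; let $\varepsilon_{0}>0$ be the corresponding threshold, obtained with parameters $\alpha+\eta$ and $\eta$ playing the roles of its $\alpha$ and $\eta$. The definition of $\alpha$ as a liminf supplies some $\varepsilon\in(0,\varepsilon_{0})$ and some $\beta\in J$ with
\[\limsup_{i\to\infty}\frac{\log S_{\varepsilon}(\Xi_{i}(\beta),\rho_{1})}{d_{i}\log(1/\varepsilon)}\leq \alpha+\eta,\]
and the preceding lemma then provides, for each large $i$, a point $\xi_{i}\in(0,1)^{\{1,\ldots,d_{i}\}\times\mathcal{U}}$ whose restriction to every $S\subseteq\{1,\ldots,d_{i}\}\times\mathcal{U}$ with $|S|\geq(\alpha+2\eta)d_{i}$ lies outside $\Psi_{i}(\Xi_{i}(\beta))\big|_{S}$.

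The remaining step is topological: from $\xi_{i}$ I would construct, for each large $i$, a continuous map $\Phi_{i}\colon\Psi_{i}(\Xi_{i}(\beta))\to K_{i}$, where $K_{i}$ is a compact polyhedron of covering dimension at most $\lfloor(\alpha+2\eta)d_{i}\rfloor$, with $\sup_{y}\|\Phi_{i}(y)-y\|_{\ell^{\infty}}$ as small as prescribed. This is a fact in the spirit of Gromov and Lindenstrauss-Weiss: whenever a compact $A\subseteq [0,1]^{N}$ admits a single point $\xi\in(0,1)^{N}$ whose projection onto every coordinate subset of size at least $k$ misses the corresponding projection of $A$, then $A$ retracts, up to arbitrarily small $\ell^{\infty}$ perturbation, onto a polyhedron of dimension at most $k-1$. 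Concretely, one subdivides the cube finely and, inside each top-dimensional subcube meeting $A$ whose interior contains a slight perturbation of $\xi_{i}$, radially projects away from that perturbation onto the boundary; iterating this collapse yields the desired low-dimensional target.

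Precomposing with $\Psi_{i}$ gives a continuous map $\Phi_{i}\circ\Psi_{i}\colon\Xi_{i}(\beta)\to K_{i}$. If $x,y\in\Xi_{i}(\beta)$ have the same image, then the $\ell^{\infty}$ closeness of $\Psi_{i}(x)$ and $\Psi_{i}(y)$ (chosen small compared to $L$), together with the existence of some $U$ with $f_{U}(x(j))=1$ for each $j$, forces $f_{U}(y(j))>0$ and hence $y(j)\in U$; the $\rho'$-diameter bound on $\mathcal{U}$ then gives $\rho'(x(j),y(j))<\delta$ for every $j$, so the fibers have $\rho'_{\infty}$-diameter less than $\delta$. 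Thus $\wdim_{\delta}(\Xi_{i}(\beta),\rho'_{\infty})\leq\lfloor(\alpha+2\eta)d_{i}\rfloor$ for all large $i$, and taking $\limsup_{i}$, then infimum over $\beta$, and letting $\eta\to 0$ concludes the argument. The main obstacle is the topological step: turning the single-point avoidance produced by the preceding lemma into a bona fide covering dimension bound requires a classical cubical or simplicial retraction argument, and one has to track the sup-norm perturbation carefully to preserve the fiber-diameter conclusion; the rest is bookkeeping around the definitions.
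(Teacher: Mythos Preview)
Your approach is essentially the paper's: both invoke the preceding lemma to obtain the avoidance point $\xi_{i}$, then appeal to the Lindenstrauss--Weiss/Li retraction machinery (the paper simply cites \cite{LindWeiss} Theorem~4.2 and Lemma~4.4, and \cite{Li} Lemmas~6.4--6.5) to collapse $\Psi_{i}(\Xi_{i}(\beta))$ onto a polyhedron of dimension at most $(\alpha+O(\eta))d_{i}$ while controlling the fibers, and then read off the $\wdim_{\delta}$ bound.

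One caution about your description of the topological step: the retraction $\Phi_{i}$ coming out of the Lindenstrauss--Weiss construction is \emph{not} close to the identity in $\ell^{\infty}$---a single radial projection away from $\xi_{i}$ inside the unit cube can move points by nearly $1$. The property one actually obtains is that each fiber of $\Phi_{i}$ lies in a product of coordinate half-intervals $[0,\xi_{i}(j,U)]$ or $[\xi_{i}(j,U),1]$. This still yields your fiber conclusion: if $f_{U}(x(j))=1$ then $\Psi_{i}(x)(j,U)=1$ lies in the half-interval $[\xi_{i}(j,U),1]$, so any $y$ in the same fiber has $f_{U}(y(j))\geq \xi_{i}(j,U)>0$, hence $y(j)\in U$. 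So the argument goes through unchanged, but the intermediate justification via ``$\ell^{\infty}$-closeness of $\Psi_{i}(x)$ and $\Psi_{i}(y)$'' should be replaced by this half-box property.
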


\begin{proof}  Set
\[D= \liminf_{\varepsilon\to 0}\inf_{\beta}\limsup_{i\to \infty}\frac{\log S_{\varepsilon}(\Xi_{i}(\beta),\rho_{1})}{d_{i}\log (1/\varepsilon)}.\]
Let $\mathcal{U}$ be an open cover of $X$ consisting of balls of radius $\delta/2$ in the $\rho'$ pseudometric. For $k\in \NN,$ we define
\[\mathcal{U}^{k}=\{U_{1}\times U_{2}\times U_{3}\times \cdots U_{k}:U_{j}\in \mathcal{U},1\leq j\leq k\}.\]
 Let $\eta>0,$ and let $\varepsilon_{0}$ be as in the above Lemma for $\alpha=D+2\eta.$ Choose $\varespilon_{0}>\varepsilon>0$ so that
\[\inf_{\beta} \limsup_{i\to \infty}\frac{\log S_{\varepsilon}(\Xi_{i}(\beta),\rho_{1})}{d_{i}\log (1/\varepsilon)}\leq D+\eta.\]
Thus we can choose $\beta_{0}$ so that
\[ \limsup_{i\to \infty}\frac{\log S_{\varepsilon}(\Xi_{i}(\beta_{0}),\rho_{1})}{d_{i}\log (1/\varepsilon)}\leq D+2\eta.\]

	By our choice of $\varepsilon_{0},$ we may apply the preceding Lemma, and follow the proof in \cite{LindWeiss} of Theorem 4.2 and Lemma 4.4 ( see also Lemmas 6.4 and 6.5 in \cite{Li}),  to find a continuous map (defined for all large $i$)
\[f\colon \Xi_{i}(\beta_{0})\to \bigcup_{\substack{|S|\leq (D+3\eta)d_{i},\\ S\subseteq \mathcal{U}\times \{1,\cdots,d_{i}\}}}[0,1]^{S}\times \{0,1\}^{(\mathcal{U}\times \{1,\cdots,d_{i}\})\setminus S}\subseteq [0,1]^{\mathcal{U}\times \{1,\cdots,d_{i}\}},\]
such that for every $z\in [0,1]^{\mathcal{U}\times \{1,\dots,d_{i}\}}$ there is some $U\in\mathcal{U}^{d_{i}}$ with $f^{-1}(\{z\})\subseteq U.$  Since
\[\bigcup_{\substack{|S|\leq (D+3\eta)d_{i},\\ S\subseteq \mathcal{U}\times \{1,\cdots,d_{i}\}}}[0,1]^{S}\times \{0,1\}^{(\mathcal{U}\times \{1,\cdots,d_{i}\})\setminus S}\subseteq [0,1]^{\mathcal{U}\times \{1,\cdots,d_{i}\}},\]
has dimension at most $(D+3\eta)d_{i},$ we find that
\[\limsup_{i\to \infty}\frac{1}{d_{i}}\wdim_{\delta}(\Xi_{i}(\beta_{0}),\rho'_{\infty})\leq D+3\eta.\]
Thus
\[\inf_{\beta_{0}}\limsup_{i\to \infty}\frac{1}{d_{i}}\wdim_{\delta}(\Xi_{i}(\beta_{0}),\rho'_{\infty})\leq D+3\eta,\]
and as $\eta>0$ was arbitrary we are done.

\end{proof}

Proceeding as in \cite{Li}, we have the following corollary.
\begin{cor}\label{C:mdimcomparison} Let $\Gamma$ be a countable discrete sofic group acting by homeomorphisms on a compact metrizable space $X.$ For any sofic approximation $\Sigma$ and $1\leq p\leq q\leq \infty$ we have
\[\mdim_{\Sigma}(X,\Gamma)\leq \mdim_{\Sigma,M,p}(X,\Gamma)\leq \mdim_{\Sigma,M,q}(X,\Gamma).\]

\end{cor}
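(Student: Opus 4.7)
The plan is to prove the corollary in two independent steps: monotonicity of $\mdim_{\Sigma,M,p}$ in $p$, which is a soft consequence of Jensen's inequality on probability spaces, and the first inequality $\mdim_\Sigma \leq \mdim_{\Sigma,M,1}$, which is essentially a direct packaging of Lemma \ref{L:subsetsofproducts}.

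For the monotonicity $\mdim_{\Sigma,M,p} \leq \mdim_{\Sigma,M,q}$ when $1 \leq p \leq q \leq \infty$: fix any dynamically generating continuous pseudometric $\rho$ on $X$. Since $\rho_p$ and $\rho_q$ are the $L^p(\mu_{d_i})$ and $L^q(\mu_{d_i})$ norms of the function $j \mapsto \rho(\phi(j), \psi(j))$, and $\mu_{d_i}$ is a probability measure, Jensen's inequality gives $\rho_p(\phi,\psi) \leq \rho_q(\phi,\psi)$ (with the convention that this extends to $q=\infty$). Hence any subset that is $\varepsilon$-dense with respect to $\rho_q$ is also $\varepsilon$-dense with respect to $\rho_p$, so $S_\varepsilon(\Map(\rho,F,\delta,\sigma_i),\rho_p) \leq S_\varepsilon(\Map(\rho,F,\delta,\sigma_i),\rho_q)$. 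Chaining definitions gives $\mdim_{\Sigma,p}(X,\rho) \leq \mdim_{\Sigma,q}(X,\rho)$, and taking infimum over $\rho$ yields the second inequality of the corollary.

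For the first inequality it suffices to prove $\mdim_\Sigma(X,\Gamma) \leq \mdim_{\Sigma,M,1}(X,\Gamma)$, because the monotonicity step then gives $\mdim_{\Sigma,M,1} \leq \mdim_{\Sigma,M,p}$ for all $p \geq 1$. Fix any compatible metric $\rho$ on $X$ (this is automatically a dynamically generating continuous pseudometric). I apply Lemma \ref{L:subsetsofproducts} with the roles $\rho' = \rho$, index set $J = \{(F,\delta') : F \subseteq \Gamma \text{ finite}, \delta' > 0\}$, and $\Xi_i(F,\delta') = \Map(\rho, F, \delta', \sigma_i)$. For each fixed $\varepsilon > 0$ (played by the $\delta$ in the lemma), the left-hand side of that lemma is exactly $\mdim_\Sigma(\varepsilon)$ computed using $\rho$ as both the dynamically generating pseudometric and the compatible metric, while the right-hand side is exactly $\mdim_{\Sigma,1}(X,\rho)$. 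Thus $\mdim_\Sigma(\varepsilon) \leq \mdim_{\Sigma,1}(X,\rho)$ for every $\varepsilon > 0$; taking supremum in $\varepsilon$ gives $\mdim_\Sigma(X,\Gamma) \leq \mdim_{\Sigma,1}(X,\rho)$. Taking infimum over all compatible metrics (which by the preceding lemma computes $\mdim_{\Sigma,M,1}$), I conclude $\mdim_\Sigma(X,\Gamma) \leq \mdim_{\Sigma,M,1}(X,\Gamma)$.

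There is no real obstacle here: the substantive content has been pushed into Lemma \ref{L:subsetsofproducts} (which in turn rests on the volume-packing lemma for $\ell^\infty(\ell^1)$-balls), so the corollary is just a matter of correctly instantiating that lemma and invoking Jensen's inequality.
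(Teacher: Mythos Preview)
Your proof is correct and follows essentially the same approach as the paper: the paper dismisses the right-hand inequalities as ``trivial'' (your Jensen argument is the obvious justification), and for the left-most inequality it likewise fixes a compatible metric $\rho$, takes $J$ to be the set of pairs $(F,\delta)$, and invokes Lemma~\ref{L:subsetsofproducts} directly. Your write-up simply spells out a few more details than the paper's terse version.
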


\begin{proof} The right-hand inequalities are trivial. For the left most inequality, it is enough to handle the case $p=1.$ Fix a compatible metric $\rho$ on $X.$  Taking $J$ to be the set of tuples $(F,\delta),$ with $F\subseteq \Gamma$ finite,  and $\delta>0,$ the preceding Lemma proves the left-most inequality.

\end{proof}

While in general it is unclear if all these versions of metric mean dimension agree, there is one simple case where they do agree. Recall that if $X$ is a compact topological space with a continuous pseudometric $\rho,$ then the  packing dimension of $X$ is defined as
\[\limsup_{\varepsilon\to 0}\frac{\log S_{\varespilon}(X,\rho)}{\log(1/\varespilon)}.\]

\begin{proposition}\label{P:finitepack} Let $\Gamma$ be a countable discrete sofic group with sofic approximation $\Sigma$ acting by homeomorphisms on a compact metrizable space $X.$ If $\rho$ is a dynamically generating continuous pseudometric on $X$ so that $(X,\rho)$ has finite packing dimension, then
\[\mdim_{\Sigma,1}(X,\rho)=\mdim_{\Sigma,\infty}(X,\rho).\]
\end{proposition}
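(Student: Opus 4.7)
The inequality $\mdim_{\Sigma,1}(X,\rho)\leq \mdim_{\Sigma,\infty}(X,\rho)$ is immediate from $\rho_{1}\leq \rho_{\infty}$, so the goal is to establish the reverse inequality using the finite packing dimension hypothesis. Let $d$ denote the packing dimension of $(X,\rho)$; by definition, for every $\delta'>0$ there is $\eta_{0}>0$ so that $S_{\eta}(X,\rho)\leq \eta^{-(d+\delta')}$ for all $\eta<\eta_{0}$. Fix any $\alpha>\mdim_{\Sigma,1}(X,\rho)$ and any small $c>0$; I plan to show $\mdim_{\Sigma,\infty}(X,\rho)\leq (1+c)\alpha$, which suffices since $\alpha,c$ are arbitrary.

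The core construction is a Markov-type ``repair'' trick. Choose a sequence $\varepsilon_{n}\to 0$ and corresponding $(F_{n},\delta_{n})$ realizing the liminf/inf defining $\mdim_{\Sigma,1}(X,\rho)$, so that for all large $i$,
\[
\log S_{\varepsilon_{n}}(\Map(\rho,F_{n},\delta_{n},\sigma_{i}),\rho_{1})\leq (\alpha+o_{n}(1))\,d_{i}\log(1/\varepsilon_{n}).
\]
Set $\eta_{n}=\varepsilon_{n}^{1/(1+c)}$, so that $\beta_{n}:=\varepsilon_{n}/\eta_{n}=\eta_{n}^{c}\to 0$. Let $\Phi_{n,i}$ be a $\rho_{1}$-minimal $\varepsilon_{n}$-dense subset of $\Map(\rho,F_{n},\delta_{n},\sigma_{i})$, and let $D_{\eta_{n}}\subseteq X$ be a $\rho$-minimal $\eta_{n}$-dense set. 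For $\phi\in \Map(\rho,F_{n},\delta_{n},\sigma_{i})$ choose $\psi\in \Phi_{n,i}$ with $\rho_{1}(\phi,\psi)<\varepsilon_{n}$; Markov's inequality forces the ``bad set'' $J=\{j:\rho(\phi(j),\psi(j))\geq \eta_{n}\}$ to satisfy $|J|\leq \beta_{n} d_{i}$. Define a surrogate $\widetilde{\psi}$ by $\widetilde{\psi}(j)=\psi(j)$ off $J$ and by the closest element of $D_{\eta_{n}}$ to $\phi(j)$ on $J$; then $\rho_{\infty}(\widetilde{\psi},\phi)<\eta_{n}$. The number of such surrogates is at most
\[
|\Phi_{n,i}|\,\sum_{k\leq \beta_{n} d_{i}}\binom{d_{i}}{k}|D_{\eta_{n}}|^{k}.
\]
Hence, up to the factor of $2$ from \eqref{E:canonicalperturbationinequality}, this also bounds $S_{2\eta_{n}}(\Map(\rho,F_{n},\delta_{n},\sigma_{i}),\rho_{\infty})$.

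Now I would take logs and divide by $d_{i}\log(1/\eta_{n})$. Using $\binom{d_{i}}{\leq \beta_{n}d_{i}}\leq e^{d_{i}H(\beta_{n})}$ with $H(\beta)\leq -\beta\log\beta+\beta$, the packing bound $\log|D_{\eta_{n}}|\leq (d+\delta')\log(1/\eta_{n})$, and the change of scale $\log(1/\varepsilon_{n})=(1+c)\log(1/\eta_{n})$, one obtains
\[
\frac{\log S_{2\eta_{n}}(\Map,\rho_{\infty})}{d_{i}\log(1/\eta_{n})}\leq (1+c)\cdot \frac{\log|\Phi_{n,i}|}{d_{i}\log(1/\varepsilon_{n})}+(d+\delta')\beta_{n}+\frac{H(\beta_{n})}{\log(1/\eta_{n})}+o(1),
\]
where the $o(1)$ is as $i\to\infty$. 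Taking $\limsup_{i\to\infty}$ yields a bound $(1+c)\alpha+(d+\delta')\beta_{n}+o_{n}(1)$, and since $\beta_{n}\to 0$ and $\eta_{n}\to 0$, passing to $\liminf_{n\to\infty}$ gives $\mdim_{\Sigma,\infty}(X,\rho)\leq (1+c)\alpha$, as desired.

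The only delicate point is the bookkeeping between the two scales $\varepsilon_{n}$ and $\eta_{n}$: one needs $\beta_{n}=\varepsilon_{n}/\eta_{n}\to 0$ so that the combinatorial overhead $H(\beta_{n})$ and the ``net cost'' $\beta_{n}\log|D_{\eta_{n}}|$ become negligible in the $\log(1/\eta_{n})$-normalization, while simultaneously the ratio $\log(1/\varepsilon_{n})/\log(1/\eta_{n})=1+c$ stays bounded so that the principal term survives the change of denominator. Any choice like $\eta_{n}=\varepsilon_{n}^{1/(1+c)}$ works, and letting $c\to 0$ at the very end completes the proof.
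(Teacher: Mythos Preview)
Your proof is correct and follows essentially the same route as the paper: a Markov/Chebyshev argument to localize the bad coordinates, a repair on the bad set using an $\eta$-net of $(X,\rho)$, a Stirling-type bound on the number of bad subsets, and the finite packing dimension to kill the repair cost. Your scaling $\eta_{n}=\varepsilon_{n}^{1/(1+c)}$ with $c\to 0$ is exactly the paper's choice $\eta=\varepsilon^{\alpha}$ with $\alpha\to 1$ under the reparametrization $\alpha=1/(1+c)$.
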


\begin{proof}

It is easy to see that
\[\mdim_{\Sigma,\infty}(X,\rho)\geq \liminf_{\varepsilon\to 0}\inf_{F,\delta}\limsup_{i\to \infty}\frac{\log S_{\varepsilon}(\Map(\rho,F,\delta,\sigma_{i}),\rho_{1})}{d_{i}\log (1/\varepsilon)}.\]

	For the converse, fix $0<\alpha<1.$ If $\phi,\psi$ are maps from $\{1,\cdots,d_{i}\}$ to $ X$ and $\rho_{1}(\phi,\psi)<\varepsilon,$ then $\rho(\phi(j),\psi(j))<\varepsilon^{\alpha}$ on a set of cardinality at least $(1-\varepsilon^{1-\alpha})d_{i}.$ The set of all subsets of $\{1,\cdots,d_{i}\}$ of cardinality at most $\varepsilon^{1-\alpha}d_{i}$ is at most
\[\varepsilon^{1-\alpha}d_{i}\binom{d_{i}}{\lfloor{\varepsilon^{1-\alpha}d_{i}\rfloor}}\leq C\varepsilon^{1-\alpha}d_{i}\frac{1}{(\varepsilon^{1-\alpha})^{d_{i}\varepsilon^{1-\alpha}}((1-\varepsilon^{1-\alpha}))^{d_{i}(1-\varepsilon^{1-\alpha})}}\]
for some $C>0,$ by Stirling's Formula. Thus
\begin{align*}
\frac{\alpha\log(1/\varepsilon)-\log(2)}{\log(1/\varepsilon)}\mdim_{\Sigma,\infty}({2\varepsilon^{\alpha}},\rho)&\leq \mdim_{\Sigma,1}(\varepsilon,\rho)+\varespilon^{1-\alpha}\frac{\log S_{\varepsilon^{\alpha}}(X,\rho)}{\log(1/\varepsilon)}\\
&+(1-\alpha)\varepsilon^{1-\alpha}-(1-\varepsilon^{1-\alpha})\frac{\log(1-\varepsilon^{1-\alpha})}{\log(1/\varepsilon)}.
\end{align*}
Since $(X,\rho)$ has finite packing dimension, we find that
\[\alpha\mdim_{\Sigma,\infty}(X,\rho)\leq \mdim_{\Sigma,1}(X,\rho),\]
and the proof is completed by letting $\alpha\to 1.$

\end{proof}

A dynamically generating continuous pseudometric with finite packing dimension always exists if $X\subseteq Y^{\Gamma}$ with $Y$ a compact manifold. For algebraic actions corresponding to finitely generated modules, one can take $Y=\TT^{n}$ for some $n,$ and this is how we will apply the above Proposition.

\subsection{p-Metric Mean Dimension For Amenable Groups}

	In this subsection, we give an alternate formulation for $p$-metric mean  dimension for actions of amenable groups. We shall show that for actions of amenable groups, $p$-metric mean dimension is actually given by an $\ell^{p}$-analogue of metric mean dimension as defined by Lindenstrauss and Weiss. In particular, we will show that $p$-metric mean dimension for amenable groups is independent of the choice of sofic approximation. For this, we use the following Theorem of Gabor Elek and Endre Szabo, which gives a uniqueness result for sofic approximations of an amenable group.

\begin{theorem}[Theorem 2 in \cite{ESZ2}]\label{T:therecanbeonlyone!} Let $\Gamma$ be a countable discrete amenable group, and let $d_{i}$ be a sequence of positive integers tending to $\infty.$ Suppose that $\Sigma=(\sigma_{i}\colon \Gamma\to S_{d_{i}}),\Sigma'=(\sigma'_{i}\colon \Gamma\to S_{d_{i}})$ are two sofic approximations. Then there are $\tau_{i}\in S_{d_{i}}$ so that
\[\frac{|\{j:\tau_{i}\sigma_{i}(g)\tau_{i}^{-1}(j)=\sigma_{i}'(g)(j)\}|}{d_{i}}\to 1\]
for all $g\in \Gamma.$\end{theorem}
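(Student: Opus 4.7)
The idea is to exploit amenability to put both sofic approximations $\Sigma$ and $\Sigma'$ into a canonical form dictated by Ornstein--Weiss quasi-tilings, and then to conjugate them onto each other by matching the tiles via a permutation $\tau_i$.

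First, fix a finite $F \subseteq \Gamma$ and $\delta > 0$. By the Ornstein--Weiss quasi-tiling theorem, choose F\o lner sets $K_1,\ldots,K_r \subseteq \Gamma$ which are sufficiently $(F,\delta)$-invariant to $\delta$-quasi-tile $\Gamma$. Using the near-multiplicativity $\sigma_i(g)\sigma_i(h)(k) \approx \sigma_i(gh)(k)$ and near-freeness of $\sigma_i$, I would first establish a structural lemma: for all sufficiently large $i$, a subset of $\{1,\ldots,d_i\}$ of size at least $(1-\delta')d_i$ decomposes into pairwise disjoint ``tiles'' $T_{l,j}$ together with bijections $\phi_{l,j}\colon K_l \to T_{l,j}$ that almost intertwine left multiplication by $F$ on $K_l$ with $\sigma_i(F)$ acting on $T_{l,j}$. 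Heuristically, at a generic $k \in \{1,\ldots,d_i\}$ the orbit map $g \mapsto \sigma_i(g)(k)$ is injective on each $K_l$ and almost respects multiplication, producing an isomorphic copy of $K_l$ inside $\{1,\ldots,d_i\}$; the combinatorial quasi-tiling of $\Gamma$ can then be transported to a near-partition of $\{1,\ldots,d_i\}$.

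Applying the structural lemma simultaneously to $\Sigma$ and $\Sigma'$ yields analogous decompositions $\{T_{l,j}\}$ and $\{T'_{l,j}\}$ with intertwining maps $\phi_{l,j}$ and $\phi'_{l,j}$. The proportion $n_l^{(i)}/d_i$ of $K_l$-tiles in either decomposition is determined, up to a $\delta$-fraction, by purely group-theoretic densities in the quasi-tiling of $\Gamma$ (equivalently, by evaluating the trace of $\sigma_i$ on certain indicators, which converges in both sofic approximations to the group trace of the same element of $L(\Gamma)$). Hence, for $i$ large, we may biject the $K_l$-tiles of $\sigma_i$ with those of $\sigma'_i$ for each $l$, and define $\tau_i$ on each tile $T_{l,j}$ by $\tau_i = \phi'_{l,j'} \circ \phi_{l,j}^{-1}$; on the leftover set of size at most $\delta'd_i$, extend $\tau_i$ arbitrarily to a permutation. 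A direct computation using the intertwining properties then gives $\tau_i \sigma_i(g) \tau_i^{-1}(k) = \sigma'_i(g)(k)$ for all $g \in F$ and all $k$ outside a set of density $O(\delta)$.

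A standard diagonal argument finishes the proof: enumerate increasing $F_n \nearrow \Gamma$ and $\delta_n \to 0$, choose $i_n$ so the above construction for $(F_n,\delta_n)$ succeeds whenever $i \geq i_n$, and let $\tau_i$ be the associated permutation for the appropriate block. The main obstacle will be executing the quasi-tiling step cleanly: Ornstein--Weiss quasi-tilings genuinely overlap, so one must extract a pairwise disjoint family while sacrificing only a $\delta$-fraction; the intertwining identity $\phi_{l,j}(gx)=\sigma_i(g)\phi_{l,j}(x)$ must hold for all $g \in F$ simultaneously on most of $K_l$, which requires combining the almost-homomorphism property with $(F,\delta)$-invariance of the $K_l$; and the stabilization of the tile-density counts across different sofic approximations must be justified uniformly. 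Once these combinatorial details are handled, the matching argument is essentially bookkeeping.
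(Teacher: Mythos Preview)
The paper does not prove this theorem; it is quoted verbatim from Elek--Szab\'o \cite{ESZ2} and used as a black box in the subsequent Proposition~\ref{P:independence}. So there is no ``paper's own proof'' to compare against.

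That said, your outline is essentially the Elek--Szab\'o strategy: use Ornstein--Weiss quasi-tilings to decompose each of $\{1,\dots,d_i\}$ under $\sigma_i$ and $\sigma_i'$ into copies of F\o lner tiles, and then conjugate tile onto tile. One point in your sketch deserves scrutiny. You assert that the proportion of $K_l$-tiles is determined (up to a $\delta$-fraction) by ``purely group-theoretic densities'' or equivalently by the trace of $\sigma_i$ on certain indicators. This is not quite right: the tile multiplicities in an Ornstein--Weiss quasi-tiling are \emph{not} canonical---different legitimate quasi-tilings of the same set can use the shapes $K_1,\dots,K_r$ in different proportions---and there is no element of $L(\Gamma)$ whose trace encodes these counts. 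The actual resolution is more hands-on: either one arranges a hierarchical quasi-tiling (each $K_{l+1}$ is itself $\varepsilon$-quasi-tiled by $K_1,\dots,K_l$) so that any surplus of large tiles in one approximation can be broken into smaller tiles to match the other, or one passes through a ``canonical'' sofic approximation built from direct sums of left-translation actions on F\o lner sets and shows each of $\Sigma,\Sigma'$ is asymptotically conjugate to it. Once you replace the trace argument with one of these, the rest of your outline (the diagonal extraction over $(F_n,\delta_n)$, the tile-by-tile definition of $\tau_i$, and the $O(\delta)$ bookkeeping) goes through.
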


\begin{proposition}\label{P:independence} Let $\Gamma$ be a countable discrete amenable group. Let $X$ be a compact metrizable space and $\Gamma\actson X$ by homeomorphisms. Let $\rho$ be a dynamically generating continuous pseudometric on $X.$ Let $1\leq p<\infty,$ and $\Sigma,\Sigma'$ be two sofic approximations of $\Gamma.$ Then
\[\mdim_{\Sigma,p}(X,\rho)=\mdim_{\Sigma',p}(X,\rho).\]
\end{proposition}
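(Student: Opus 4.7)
The plan is to transfer microstates between the two sofic approximations using the Elek--Szabo rigidity result. By Theorem \ref{T:therecanbeonlyone!}, there exist permutations $\tau_{i}\in S_{d_{i}}$ such that $\tau_{i}\sigma_{i}(g)\tau_{i}^{-1}$ agrees with $\sigma_{i}'(g)$ on a subset of $\{1,\dots,d_{i}\}$ of density tending to one, for every $g\in\Gamma$. Define $\Phi_{i}\colon X^{d_{i}}\to X^{d_{i}}$ by $\Phi_{i}(\phi)=\phi\circ\tau_{i}^{-1}$; this bijection is the candidate map that will be used to relate microstates for $\Sigma$ to microstates for $\Sigma'$.

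First I would check that, given $F\subseteq\Gamma$ finite and $\delta,\eta>0$, one has $\Phi_{i}(\Map(\rho,F,\delta,\sigma_{i}))\subseteq \Map(\rho,F,\delta+\eta,\sigma_{i}')$ for all sufficiently large $i$. A change of variable $k=\tau_{i}^{-1}j$ gives
\[\rho_{2}(\Phi_{i}(\phi)\circ\sigma_{i}'(g),g\Phi_{i}(\phi))^{2}=\frac{1}{d_{i}}\sum_{k=1}^{d_{i}}\rho(\phi(\tau_{i}^{-1}\sigma_{i}'(g)\tau_{i}k),g\phi(k))^{2},\]
which differs from $\rho_{2}(\phi\circ\sigma_{i}(g),g\phi)^{2}$ only on those indices $k$ where $\tau_{i}^{-1}\sigma_{i}'(g)\tau_{i}$ disagrees with $\sigma_{i}(g)$; on those indices each term is bounded by $\diam(X,\rho)^{2}$. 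Since the disagreement density tends to zero as $i\to\infty$ by Theorem \ref{T:therecanbeonlyone!}, the desired inclusion holds for all sufficiently large $i$.

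The key observation is that $\Phi_{i}$ is an isometry of $(X^{d_{i}},\rho_{p})$ for every $1\leq p<\infty$, since $\rho_{p}$ is invariant under permutation of coordinates. Consequently, the $\rho_{p}$-covering numbers of $\Phi_{i}(\Map(\rho,F,\delta,\sigma_{i}))$ coincide with those of $\Map(\rho,F,\delta,\sigma_{i})$. Combining this with the inclusion of the previous paragraph and with (\ref{E:canonicalperturbationinequality}) applied to $A=\Phi_{i}(\Map(\rho,F,\delta,\sigma_{i}))$ and $B=\Map(\rho,F,\delta+\eta,\sigma_{i}')$, one obtains
\[S_{2\varepsilon}(\Map(\rho,F,\delta,\sigma_{i}),\rho_{p})\leq S_{\varepsilon}(\Map(\rho,F,\delta+\eta,\sigma_{i}'),\rho_{p})\]
for all large $i$.

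Finally, I would divide by $d_{i}\log(1/(2\varepsilon))$, take $\limsup_{i}$, then infimum over $F$, $\delta$, and $\eta$, and then $\liminf_{\varepsilon\to 0}$. Since the correction factor $\log(1/\varepsilon)/\log(1/(2\varepsilon))$ tends to one, one concludes $\mdim_{\Sigma,p}(X,\rho)\leq \mdim_{\Sigma',p}(X,\rho)$, and the reverse inequality follows by symmetry (swap the roles of $\Sigma$ and $\Sigma'$ and use $\tau_{i}^{-1}$ in place of $\tau_{i}$). No step here is particularly deep; the substantive ingredient is the Elek--Szabo rigidity theorem together with the elementary but crucial fact that the $\ell^{p}$-averaged pseudometric $\rho_{p}$ is permutation-invariant, and the main technical nuisance is bookkeeping the passage from $\varepsilon$ to $2\varepsilon$, which only contributes a factor vanishing in the $\varepsilon\to 0$ limit.
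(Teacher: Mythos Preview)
Your argument has a genuine gap: Theorem \ref{T:therecanbeonlyone!} as stated in the paper only applies when the two sofic approximations $\Sigma=(\sigma_{i}\colon\Gamma\to S_{d_{i}})$ and $\Sigma'=(\sigma_{i}'\colon\Gamma\to S_{d_{i}})$ share the \emph{same} dimension sequence $(d_{i})_{i}$. In the proposition there is no such hypothesis: a priori $\Sigma'=(\sigma_{i}'\colon\Gamma\to S_{d_{i}'})$ with $d_{i}'\ne d_{i}$, so there is no symmetric group in which both $\sigma_{i}$ and $\sigma_{i}'$ live, and your sentence ``there exist permutations $\tau_{i}\in S_{d_{i}}$ such that $\tau_{i}\sigma_{i}(g)\tau_{i}^{-1}$ agrees with $\sigma_{i}'(g)$ \dots'' is simply not licensed.

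The paper spends most of its proof on precisely this point. It first manufactures, out of $\Sigma$, a new sofic approximation $\widetilde{\Sigma}=(\sigma_{l(i)}^{\oplus k(i)}\oplus t_{r(i)})$ whose dimension sequence matches $d_{i}'$, by taking direct sums of blocks of $\Sigma$ and padding with the trivial map; only then can Elek--Szabo be invoked to compare $\Sigma'$ with $\widetilde{\Sigma}$, and that step is essentially your argument. But one still has to show that $\mdim_{\widetilde{\Sigma},p}(X,\rho)=\mdim_{\Sigma,p}(X,\rho)$, and this is nontrivial: after discarding the negligible trivial summand, one must compare $\Sigma''=(\sigma_{l(i)}^{\oplus k(i)})$ with $\Sigma$. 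A microstate $\phi$ for $\sigma_{l(i)}^{\oplus k(i)}$ need not decompose into $k(i)$ microstates for $\sigma_{l(i)}$; rather, on each block it is only an approximate microstate, and one has to control how many blocks fail to lie in $\Map(\rho,F,\delta,\sigma_{l(i)})$ via Chebyshev, then absorb the resulting combinatorial factor $\binom{k(i)}{\lceil\varepsilon k(i)\rceil}$ using Stirling's formula. Your proposal contains none of this, so as written it proves the result only under the additional hypothesis $d_{i}=d_{i}'$.
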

\begin{proof} We set up some notation. For two functions (again not necessarily homomorphisms) $\phi\colon \Gamma\to S_{n},\psi\colon \Gamma\to S_{m}$ we let $\phi\oplus \psi\colon \Gamma\to S_{n+m}$ be defined by
\[\phi\oplus \psi(j)=\begin{cases}
\phi(j),\textnormal{ if $1\leq j\leq n$}\\
n+\psi(j-n)\textnormal{ if $n+1\leq j\leq n+m$}.
\end{cases}\]

	We use $\phi^{\oplus k}$ for the $k$-fold direct sum of $\phi.$ Let $t_{l}\colon \Gamma\to S_{l}$ be the trivial homomorphism. We leave it as an exercise to show that we may find an  increasing function
\[l\colon \NN\to \NN,\]
with
\[\lim_{n\to \infty}l(n)=\infty,\]
 and  functions
\[r\colon \NN\to \NN\]
\[k\colon \NN\to \NN\]
so that
\[\frac{r(i)}{k(i)d_{l(i)}}\to_{i\to\infty} 0,\]
and
\[d_{i}'=d_{l(i)}k(i)+r(i).\]
Set
\[\widetilde{\Sigma}=(\sigma_{l(i)}^{\oplus k(i)}\oplus t_{r(i)})_{i\geq 1}.\]
By the Theorem of Elek-Szabo, there are $\tau_{i}\in S_{d_{i}'}$ so that
\[\frac{|\{j:\tau_{i}\sigma_{i}'(g)\tau_{i}^{-1}(j)=(\sigma_{l(i)}^{\oplus k(i)}\oplus t_{r(i)})(g)(j)\}|}{d_{i}'}\to 1.\]
From this it is easy to see that for any finite $F\subseteq\Gamma$ and $\delta>0,$ we have for any $0<\delta'<\delta$ that
\[\{\phi\circ \tau_{i}^{-1}:\phi\in \Map(\rho,F,\delta',\sigma_{i}')\}\subseteq \Map(\rho,F,\delta,\sigma_{l(i)}^{\oplus k(i)}\oplus t_{r(i)})\]
for all large $i.$ Similarly, for every $F\subseteq\Gamma$ finite, and $\delta'>0$ we have for any $0<\delta<\delta'$ that
\[\{\phi\circ \tau_{i}:\phi\in \Map(\rho,F,\delta,\sigma_{l(i)}^{\oplus k(i)}\oplus t_{r(i)})\}\subseteq \Map(\rho,F,\delta',\sigma_{i}')\]
for all large $i.$ Hence
\[\mdim_{\Sigma',p}(X,\rho)=\mdim_{\widetilde{\Sigma},p}(X,\rho).\]
Thus we may assume that
\[\sigma_{i}'=\sigma_{l(i)}^{\oplus k(i)}\oplus t_{r(i)}.\]

	Set
\[\sigma_{i}''=\sigma_{l(i)}^{\oplus k(i)},\]
and
\[\Sigma''=(\sigma_{i}''\colon \Gamma \to S_{d_{l(i)}k(i)}).\]
We leave it as an exercise to verify that
\[\mdim_{\Sigma',p}(X,\rho)=\mdim_{\Sigma'',p}(X,\rho).\]
So it is enough to show that
\[\mdim_{\Sigma'',p}(X,\rho)\leq \mdim_{\Sigma,p}(X,\rho).\]
Let $M$ be the diameter of $(X,\rho).$ Fix $\varespilon>0.$   Let $F\subseteq \Gamma$ be finite, $\delta>0,$ and let $\delta''>0$ to be determined shortly. Suppose
\[\phi\in \Map(\rho,F,\delta'',\sigma_{i}''),\]
and for $1\leq s\leq k(i),$ define
\[\phi_{s}\colon \{1,\cdots,d_{l(i)}\}\to X\]
by
\[\phi_{s}(j)=\phi((s-1)d_{l(i)}+j).\]
For all $g\in F,$
\[\frac{1}{k(i)}|\{s:\rho_{2}(\phi_{s}\circ \sigma_{l(i)}(g),g\phi_{s})\geq \delta\}|\leq\frac{(\delta'')^{2}}{\delta^{2}}.\]
Thus
\[\frac{1}{k(i)}|\{s:\phi_{s}\in \Map(\rho,F,\delta,\sigma_{l(i)})\}|=\frac{1}{k(i)}\left|\bigcap_{g\in F}\{s:\rho_{2}(\phi_{s}\circ \sigma_{l(i)}(g),g\phi_{s})< \delta\}\right|\geq 1-|F|\frac{(\delta'')^{2}}{\delta^{2}}.\]
Choose $\delta''$ so that
\[\frac{(\delta'')^{2}}{\delta^{2}}M^{p}|F|<\varespilon^{p}.\]

	If $x_{0}\in X^{d_{l(i)}}$ is an arbitrary element, then
\[\Map(\rho,F,\delta'',\sigma_{i}'')\subseteq_{\varespilon,\rho_{p}}\bigcup_{\substack{A\subseteq\{1,\cdots,k(i)\},\\ |A|\geq k(i)\left(1-\frac{(\delta'')^{2}}{\delta^{2}}|F|\right)}}\{\phi \in X^{d_{l(i)}k(i)}:\phi_{a}\in \Map(\rho,F,\delta,\sigma_{l(i)})\mbox{ for $a\in A$},\phi_{a}=x_{0},a\notin A\}.\]
So, if $\frac{(\delta'')^{2}}{\delta^{2}}$ is sufficiently small,
\[S_{(2015)!\varepsilon}(\Map(\rho,F,\delta'',\sigma_{i}''),\rho_{p})\leq S_{\varespilon}(\Map(\rho,F,\delta,\sigma_{l(i)}),\rho_{p})^{k(i)}\cdot k(i)\binom{k(i)}{\lceil{\frac{(\delta'')^{2}}{\delta^{2}}|F|k(i)\rceil}}.\]
Applying Stirling's Formula as in Proposition \ref{P:finitepack}  and letting $\delta''\to 0,$ we see that
\[\mdim_{\Sigma'',p}((2015)!\varespilon,\rho)\leq \mdim_{\Sigma,p}(F,\delta,\varepsilon,\rho).\]
Now taking the infimum over $F,\delta$ completes the proof.

\end{proof}

By the above proposition, if $\Gamma$ is an amenable group and $X$ is a compact metrizable space with $\Gamma\actson X$ by homeomorphisms we may define
\[\mdim_{p}(X,\rho)=\mdim_{\Sigma,p}(X,\rho)\]
for any sofic approximation. We wish to show that we can compute
\[\mdim_{p}(X,\rho)\]
by taking limits of sizes of orbits over F\o lner sequences, in the manner which is typical of entropy for amenable groups. For this, we state a quasi-tiling Lemma due to Ornstein-Weiss.

\begin{lemma}[\cite{OrnWeiss}, page 24, Theorem 6]\label{L:quasitiling} Let $\Gamma$ be a countable discrete amenable group with F\o lner sequence $(F_{n})_{n=1}^{\infty}$ so that $e\in F_{n}$ for all $n.$  Let $\eta>0,$ then there is a $K\subseteq \Gamma$ finite,  a $\beta>0$ and integers $n_{1},\cdots,n_{k}$ so that if $F\subseteq \Gamma$ finite satisfies
\[\sup_{g\in K}|gF\Delta F|\leq \beta|F|,\]
then there are finite $c_{i,j}\in F,1\leq i\leq r(j),1\leq j\leq k$ and $F_{i,j}\subseteq F_{n_{j}}$ so that
\[|F_{i,j}|\geq (1-\eta)|F_{n_{j}}|\]
\[F_{i,j}c_{i,j}\subseteq F\]
\[\{F_{i,j}c_{i,j}\}\mbox{ are disjoint }\]
\[\left|\bigcup F_{i,j}c_{i,j}\right|\geq (1-\eta)|F|.\]
\end{lemma}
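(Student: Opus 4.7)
The plan is to follow Ornstein--Weiss's classical greedy quasi-tiling argument. First I would choose $k \in \NN$ large enough that $(1-\eta)^{k} < \eta$. Then I would select the indices $n_{1} < n_{2} < \cdots < n_{k}$ by downward induction: pick $n_{k}$ arbitrarily, and having picked $n_{j+1},\dots,n_{k}$, choose $n_{j}$ so that $F_{n_{j}}$ is extremely invariant relative to $F_{n_{j+1}} \cup \cdots \cup F_{n_{k}}$, in the sense that $|(F_{n_{j+1}} \cup \cdots \cup F_{n_{k}})F_{n_{j}} \Delta F_{n_{j}}| < \eta |F_{n_{j}}|$. This is possible because $(F_{n})$ is a F\o lner sequence. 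Finally, set $K = \bigcup_{j=1}^{k} F_{n_{j}}$ and choose $\beta > 0$ small enough that, for any $(K,\beta)$-invariant $F$, the ``$F_{n_{j}}$-interior'' $\{c \in F : F_{n_{j}} c \subseteq F\}$ has cardinality at least $(1-\eta)|F|$ for every $j$.

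The tiling itself is constructed by downward induction on $j$, starting from $j=k$. At stage $j$, having produced disjoint translates at all larger scales whose union is some set $D \subseteq F$, greedily choose centers $c_{1,j}, c_{2,j}, \ldots$ in the $F_{n_{j}}$-interior of $F$ subject to the constraint
\[ \bigl|F_{n_{j}} c_{i,j} \cap \bigl(D \cup \bigcup_{i' < i} F_{n_{j}} c_{i',j}\bigr)\bigr| \leq \eta |F_{n_{j}}|. \]
Setting $F_{i,j} = \{g \in F_{n_{j}} : g c_{i,j} \notin D \cup \bigcup_{i'<i} F_{n_{j}} c_{i',j}\}$, the three conditions $|F_{i,j}| \geq (1-\eta)|F_{n_{j}}|$, $F_{i,j} c_{i,j} \subseteq F$, and pairwise disjointness of the collection $\{F_{i,j} c_{i,j}\}$ across all $i,j$ are built into the construction.

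The main obstacle is the covering estimate: one must show that at each stage $j$, if the covered set $D$ satisfies $|D| < (1-\eta)|F|$, then the greedy procedure cannot terminate until the new translates absorb at least an $\eta$-fraction of $F \setminus D$. This is proved by a double-counting argument: averaging over candidate centers $c$ in the $F_{n_{j}}$-interior and using the invariance of $F$, the expected value of $|F_{n_{j}} c \cap D|/|F_{n_{j}}|$ equals $|D|/|F|$ up to an error controlled by $\beta$, so a positive proportion of candidates will satisfy the $\eta$-spacing condition as long as $|D|/|F| < 1-\eta$. The invariance of $F_{n_{j}}$ relative to the larger scales (built into the choice of $n_{j}$) is what keeps the $\eta$-spacing condition nontrivially satisfiable once earlier scales have been placed. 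Iterating the estimate through $j=k,k-1,\ldots,1$, the uncovered portion of $F$ is multiplied by at most $(1-\eta)$ at each stage at which we haven't yet met the $(1-\eta)|F|$ threshold, so after $k$ stages the uncovered mass is at most $(1-\eta)^{k}|F| < \eta |F|$, completing the proof.
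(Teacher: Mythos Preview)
The paper does not prove this lemma; it is stated with a citation to Ornstein--Weiss \cite{OrnWeiss} and used as a black box in the subsequent theorem. Your outline is the standard Ornstein--Weiss greedy quasi-tiling argument, so there is nothing to compare it against in the paper itself.

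One small point worth tightening in your sketch: the ordering of the indices and the direction of relative invariance are inconsistent. You select $n_{k}$ first and then $n_{k-1},\dots,n_{1}$, requiring at each step that $F_{n_{j}}$ be highly invariant relative to $F_{n_{j+1}}\cup\cdots\cup F_{n_{k}}$; that forces $n_{j}$ to be large (far along the F\o lner sequence), which is incompatible with your stipulation $n_{1}<n_{2}<\cdots<n_{k}$. In the usual presentation one picks the indices by \emph{upward} induction, making each $F_{n_{j+1}}$ highly invariant relative to $F_{n_{1}}\cup\cdots\cup F_{n_{j}}$, and then runs the greedy algorithm starting from the most invariant tile $F_{n_{k}}$ down to $F_{n_{1}}$. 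With that correction, the double-counting step and the $(1-\eta)^{k}$ iteration go through as you describe.
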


We also define canonical sofic approximations of an infinite amenable group. For $F\subseteq \Gamma$ finite, and for $g\in \Gamma,$ let $\tau_{F}(g)\colon F\setminus  g^{-1}F\to F\setminus gF$ be an arbitrary bijection. Define $\sigma_{F}\colon \Gamma\to \Sym(F)$ by $\sigma_{F}(g)(x)=gx$ for $x\in F\cap g^{-1}F,$ and $\sigma_{F}(g)(x)=\tau_{F}(g)(x)$ for $x\in F\setminus g^{-1}F.$ If $(F_{n})_{n=1}^{\infty}$ is a F\o lner sequence of $\Gamma,$ then $\sigma_{F_{n}}$ is a sofic approximation.

\begin{theorem} Let $\Gamma$ be a countable discrete infinite amenable group, and $X$ a compact metrizable space with $\Gamma\actson X$ by homeomorphisms. Let $\rho$ be a dynamically generating continuous pseudometric. Let $1\leq p<\infty,$ and $(F_{n})_{n=1}^{\infty}$ a F\o lner sequence for $\Gamma.$ Define $\rho_{F_{n},p}$ on $X$ by
\[\rho_{F_{n},p}(x,y)^{p}=\frac{1}{|F_{n}|}\sum_{g\in F_{n}}\rho(gx,gy)^{p}.\]
Then,
\[\mdim_{p}(X,\rho)=\liminf_{\varespilon\to 0}\limsup_{n\to \infty}\frac{\log S_{\varespilon}(X,\rho_{F_{n},p})}{|F_{n}|\log (1/\varespilon)}.\]
In particular, the right-hand side is independent of the choice of F\o lner sequence.
\end{theorem}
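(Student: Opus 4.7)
The plan is to invoke Proposition \ref{P:independence} to compute $\mdim_p(X, \rho)$ using the canonical sofic approximation $\Sigma = (\sigma_{F_n})_{n=1}^{\infty}$ coming from the Følner sequence itself. With $\Sigma$ fixed, the key is to compare $\varepsilon$-covering numbers of $\Map(\rho, F, \delta, \sigma_{F_n})$ in $\rho_p$ with those of $X$ in $\rho_{F_n, p}$.

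For the inequality
\[\mdim_p(X, \rho) \geq \liminf_{\varepsilon \to 0} \limsup_{n \to \infty} \frac{\log S_\varepsilon(X, \rho_{F_n, p})}{|F_n| \log(1/\varepsilon)},\]
I would use the orbit embedding: for each $x \in X$, define $\phi_x \colon F_n \to X$ by $\phi_x(g) = gx$. A short calculation using the Følner property shows $\phi_x \in \Map(\rho, F, \delta, \sigma_{F_n})$ for all sufficiently large $n$, since for $g \in F$ the microstate defect is bounded in terms of the boundary density $|F_n \setminus g^{-1} F_n|/|F_n|$, which tends to $0$. Unpacking definitions gives $\rho_p(\phi_x, \phi_y) = \rho_{F_n, p}(x, y)$, so $x \mapsto \phi_x$ is an isometric embedding of $(X, \rho_{F_n, p})$ into $(\Map, \rho_p)$; hence $N_\varepsilon(X, \rho_{F_n, p}) \leq N_\varepsilon(\Map, \rho_p)$. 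Applying the relations $N_{2\varepsilon} \leq S_\varepsilon \leq N_\varepsilon$, taking $\limsup_n$, dividing by $\log(1/\varepsilon)$, and taking $\liminf_{\varepsilon \to 0}$ yields the desired inequality.

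For the reverse inequality, fix $\varepsilon, \eta > 0$ and apply Lemma \ref{L:quasitiling} to the tail $(F_n)_{n \geq N}$ of the Følner sequence, producing a finite $K \subseteq \Gamma$, some $\beta > 0$, and indices $n_1, \ldots, n_k \geq N$ such that every sufficiently Følner $F_n$ admits a disjoint collection of tiles $F_{i,j} c_{i,j} \subseteq F_n$ with $F_{i,j} \subseteq F_{n_j}$, $|F_{i,j}| \geq (1-\eta)|F_{n_j}|$, and total coverage at least $(1-\eta)|F_n|$. Take $F \supseteq K \cup \bigcup_j F_{n_j}$ and $\delta$ small. For each microstate $\phi \in \Map(\rho, F, \delta, \sigma_{F_n})$ I associate the seeds $x_{i,j} = \phi(c_{i,j}) \in X$. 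The crucial claim is that two microstates $\phi, \phi'$ with seeds $(x_{i,j})$ and $(x'_{i,j})$ satisfy
\[\rho_p(\phi, \phi')^p \leq C_p \sum_{i,j} \frac{|F_{n_j}|}{|F_n|} \rho_{F_{n_j}, p}(x_{i,j}, x'_{i,j})^p + O(\eta M^p) + o_\delta(1),\]
where $M$ is the $\rho$-diameter of $X$. The ``orbit error'' $\sum_{i,j} \sum_{g \in F_{i,j}} \rho(\phi(gc_{i,j}), g x_{i,j})^p$ is controlled by summing the microstate condition $\rho_2(\phi \circ \sigma_{F_n}(g), g\phi)^2 < \delta^2$ over $g \in \bigcup_j F_{n_j} \subseteq F$ and observing that, because the tiles are disjoint, each pair $(g, c_{i,j})$ with $g \in F_{i,j}$ contributes to the full microstate sum exactly once (via $\sigma_{F_n}(g)(c_{i,j}) = g c_{i,j}$). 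Given this estimate, covering each factor $X$ by an $\varepsilon$-net in $\rho_{F_{n_j}, p}$ yields
\[S_{\varepsilon'}(\Map, \rho_p) \leq \prod_j S_\varepsilon(X, \rho_{F_{n_j}, p})^{r(j)}\]
for some $\varepsilon'$ slightly larger than $\varepsilon$. Taking logarithms, dividing by $|F_n|$, and using $\sum_j r(j) |F_{n_j}| \leq |F_n|/(1-\eta)$ bounds $\limsup_n \frac{\log S_\varepsilon(\Map, \rho_p)}{|F_n|}$ by $\frac{1}{1-\eta} \sup_{n \geq N} \frac{\log S_\varepsilon(X, \rho_{F_n, p})}{|F_n|}$. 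Letting $N \to \infty$, then $\eta \to 0$, dividing by $\log(1/\varepsilon)$, and taking $\liminf_{\varepsilon \to 0}$ completes the proof.

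The main obstacle is the quasi-tiling estimate for $\rho_p$: one must carefully track the interaction of the $\ell^p$ triangle inequality with the orbit-approximation error from $\delta$, the off-tile contribution $\eta M^p$, and the discrepancy between averaging over $F_{i,j}$ versus $F_{n_j}$ (which costs a factor $(1-\eta)^{-1/p}$). Coordinating $\delta, \eta,$ and $\varepsilon$ so that all error terms are absorbed in the final limits, while keeping $\varepsilon'/\varepsilon$ bounded as $\varepsilon \to 0$, requires some care, but the argument follows the template of the classical Lindenstrauss-Weiss proof for amenable entropy, now adapted from $\ell^\infty$ to general $1 \leq p < \infty$.
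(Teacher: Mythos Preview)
Your proposal is correct and follows essentially the same route as the paper: both invoke Proposition~\ref{P:independence} to work with the canonical sofic approximation $(\sigma_{F_n})$, use the orbit embedding $x\mapsto\phi_x$ for the inequality $\geq$, and use Ornstein--Weiss quasi-tiling together with seed-rounding into $\varepsilon$-nets for $\rho_{F_{n_j},p}$ for the inequality $\leq$. The one technical difference is in how the orbit error on the tiles is handled: the paper first extracts, for each microstate $\phi$, a $\phi$-dependent ``good set'' $F\subseteq F_n$ on which the defect $\rho(\phi(kg),k\phi(g))$ is \emph{pointwise} below $\varepsilon$ and then applies Lemma~\ref{L:quasitiling} to $F$, whereas you tile $F_n$ itself and bound the orbit error in $\ell^p$-average by summing the $\ell^2$ microstate defect over $g\in\bigcup_j F_{n_j}$. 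Both arguments close; your variant has the minor advantage that the tiling (and hence the seed map $\phi\mapsto(x_{i,j})$ and the covering family) is independent of $\phi$, which makes the final covering-number bound $\prod_j S_\varepsilon(X,\rho_{F_{n_j},p})^{r(j)}$ immediate.
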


\begin{proof}

	Since we already know that $\mdim_{p}(X,\Gamma)$ is independent of the choice of sofic approximation, we use the sofic approximation $(\sigma_{F_{n}}\colon \Gamma\to \Sym(F_{n}))$ coming from $F_{n}$. For $x\in X,$ define
\[\phi_{x}\colon F_{n}\to X\]
by
\[\phi_{x}(g)=gx.\]
Using the definition of $\sigma_{F_{n}}$ it is not hard to show that for every $E\subseteq \Gamma$ finite, $\delta>0$ we have
\[\phi_{x}\in \Map(\rho,E,\delta,\sigma_{F_{n}})\]
for all large $n.$ As
\[\rho_{p}(\phi_{x},\phi_{y})=\rho_{F_{n},p}(x,y)\]
it follows that
\[\liminf_{\varespilon\to 0}\limsup_{n\to \infty}\frac{\log S_{\varespilon}(X,\rho_{F_{n},p})}{|F_{n}|\log (1/\varespilon)}\leq \mdim_{p}(X,\Gamma).\]

	For the reverse inequality, we may assume that $e\in F_{j}$ for all $j.$ Let $M$ be the diameter of $(X,\rho).$ We need the quasi-tiling Lemma mentioned before. Let $\varepsilon>0,$ set
\[\alpha=\limsup_{m\to \infty}\frac{\log S_{\varespilon}(X,\rho_{F_{m},p})}{|F_{m}|\log (1/\varespilon)}.\]
Let $\eta,\kappa>0$  be small  in a manner depending upon $\varepsilon$ to be determined later. We may assume that $\eta,\kappa<1.$ Let $N$ be such that for all $n\geq N$ we have
\[\frac{\log S_{\varespilon}(X,\rho_{F_{n},p})}{|F_{n}|\log (1/\varespilon)}\leq \kappa+\alpha.\]
 and let $n_{1},\cdots,n_{k}\in \NN$ with $n_{j}\geq N$ for all $j,$ $\beta>0$  and $ K\subseteq\Gamma$ finite be as Lemma \ref{L:quasitiling} for $\eta,(F_{n})_{n\in \NN}.$ Set
\[E=K\cup \bigcup_{j=1}^{k}F_{n_{j}},\]
and let $\delta>0$ be sufficiently small in a manner to be determined later.

	For $\phi \in \Map(\rho,E,\delta,\sigma_{F_{n}}),k\in E$ and for all large $n$ we have from the definition of $\sigma_{F_{n}}$ that
\[\frac{1}{|F_{n}|}\sum_{g\in F_{n}\cap k^{-1}F_{n}}\rho(\phi(kg),k\phi(g))^{2}\leq 2\delta^{2}.\]
Set
\[F=\bigcap_{k\in E}\{g\in F_{n}\cap k^{-1}F_{n}:\rho(\phi(kg),k\phi(g))<\varespilon\}.\]
Since $F_{n}$ is F\o lner for all large $n$ we have
\[|F|\geq \left(1-\frac{4\delta^{2}|E|}{\varespilon^{2}}\right)|F_{n}|.\]
We have for all $g\in\Gamma:$
\begin{align*}
\frac{|gF\Delta F|}{|F|}&\leq 2 \frac{|F_{n}\setminus F|}{|F|}+\frac{|gF_{n}\Delta F_{n}|}{|F|}\\
&\leq \frac{8\delta^{2}|E|}{\varepsilon^{2}}\frac{|F_{n}|}{|F|}+\frac{|gF_{n}\Delta F_{n}|}{|F_{n}|}\frac{|F_{n}|}{|F|}\\
&\leq \frac{8\delta^{2}|E|}{\varepsilon^{2}}\frac{1}{1-\frac{4\delta^{2}|E|}{\varepsilon^{2}}}+\frac{|gF_{n}\Delta F_{n}|}{|F_{n}|}\frac{1}{1-\frac{4\delta^{2}|E|}{\varespilon^{2}}}.
\end{align*}
Thus we can choose $\delta$  small enough so that for all large $n,$
\[\sup_{g\in K}|gF\Delta F|<\beta|F|.\]
We will want to take $\delta$ even smaller later.
Let $c_{i,j},F_{i,j},r(j)$ be as in Lemma \ref{L:quasitiling} for $F.$ Set $x_{i,j}=\phi(c_{i,j}).$ Then,
\[\sup_{h\in F_{i,j}}\rho(hx_{i,j},\phi(hc_{i,j}))<\varespilon.\]
If we choose $\eta<\frac{\varespilon^{p}}{M^{p}},$ we may then choose $\delta>0$ sufficiently small so that
\[M^{p}\left(1-(1-\eta)\left(1-\frac{4\delta^{2}|E|}{\varepsilon^{2}}\right)\right)<\varepsilon^{p}.\]

	Let $\Omega_{j}\subseteq X$ be a $\varepsilon$-dense subset with respect to $\rho_{F_{n_{j}},p}$ of minimal cardinality. Let $a\in X^{F_{n}\setminus \bigcup_{i,j}F_{i,j}c_{i,j}}$ be arbitrary. Given $y\in \prod_{i,j}\Omega_{j},$ let
\[\phi_{y}\colon F_{n}\to X\]
by given by
\[\phi_{y}(x)=\begin{cases}
gy(i,j),&\textnormal{ if $x\in F_{i,j}c_{i,j},x=gc_{i,j}$}\\
a(x),&\textnormal{ if $x\in F_{n}\setminus \bigcup_{i,j}F_{i,j}c_{i,j}$}
\end{cases}.\]
Now for $\phi$ as before, let $y_{i,j}$ be such that $\rho_{F_{n_{j}},p}(y_{i,j},x_{i,j})<\varepsilon,$ and let $y\in \prod_{i,j}\Omega_{j}$ be given by $y(i,j)=y_{i,j}.$Then,
\begin{align*}
\rho_{p}(\phi,\phi_{y})&\leq \left(M^{p}\left(1-(1-\eta)\left(1-\frac{4\delta^{2}|E|}{\varepsilon^{2}}\right)\right)+\frac{1}{|F_{n}|}\sum_{i,j}\sum_{g\in F_{i,j}}\rho(\phi(gc_{i,j}),gy_{i,j})^{p}\right)^{1/p}\\
&\leq \varepsilon+\left(\frac{1}{|F_{n}|}\sum_{i,j}\sum_{g\in F_{i,j}}\left(\varepsilon+\rho(gx_{i,j},gy_{i,j})\right)^{p}\right)^{1/p}\\
&\leq 2\varepsilon+\left(\frac{1}{|F_{n}|}\sum_{i,j}\sum_{g\in F_{i,j}}\rho(gx_{i,j},gy_{i,j})^{p}\right)^{1/p},
\end{align*}
the last two inequalities following by the triangle inequality for $\ell^{p}$-spaces. By choice of $y_{i,j}$ we have
\begin{align*}
\left(\frac{1}{|F_{n}|}\sum_{i,j}\sum_{g\in F_{i,j}}\rho(gx_{i,j},gy_{i,j})^{p}\right)^{1/p}&\leq \varepsilon\left(\frac{1}{|F_{n}|}\sum_{i,j}|F_{n_{j}}|\right)^{1/p}\\
&\leq \frac{\varepsilon}{(1-\eta)^{1/p}}\left(\frac{1}{|F_{n}|}\sum_{i,j}|F_{ij}|\right)^{1/p}\\
&=\frac{\varepsilon}{(1-\eta)^{1/p}}\left(\frac{1}{|F_{n}|}\sum_{i,j}|F_{i,j}c_{i,j}|\right)^{1/p}\\
&\leq \frac{\varepsilon}{(1-\eta)^{1/p}}\left(\frac{|F|}{|F_{n}|}\right)^{1/p}\\
&\leq\frac{\varepsilon}{(1-\eta)^{1/p}}
\end{align*}
we now assume that $(1-\eta)\geq 1/2.$
We then have
\[\rho_{p}(\phi,\phi_{y})\leq \varepsilon(2+2^{1/p}).\]

	Thus for all sufficiently small $\delta>0,$
\[S_{(2+2^{1/p})(2015)!\varespilon }(\Map(\rho,E,\delta,\sigma_{F_{n}}),\rho_{p})\leq \prod_{j=1}^{k}S_{\varespilon}(X,\rho_{F_{n_{j}},p})^{r(j)}.\]
So for all large $n,$
\[\frac{\log S_{(2+2^{1/p})\cdot(2015)!\varespilon}(\Map(\rho,E,\delta,\sigma_{F_{n}}),\rho_{p})}{|F_{n}|\log (1/\varepsilon)}\leq(\alpha+\kappa)\sum_{j=1}^{k}\frac{r(j)|F_{n_{j}}|}{|F_{n}|}.\]
We have
\[\sum_{j=1}^{k}\frac{r(j)|F_{n_{j}}|}{|F_{n}|}\leq \frac{1}{(1-\eta)|F_{n}|}\sum_{i,j}|F_{i,j}c_{i,j}|\leq \frac{1}{(1-\eta)}.\]
Taking infimum over $\delta>0,$ and then an infimum over the finite subsets of $\Gamma,$
\[\frac{\log(1/\varepsilon)-\log((2+2^{1/p})(2015)!)}{\log(1/\varepsilon)}\mdim_{\Sigma,p}\left((2+2^{1/p})\cdot(2015)!\varespilon,\rho\right)\leq (\alpha+\kappa)\frac{1}{1-\eta}.\]
As $\eta,\kappa$ can be made arbitrary small, we find that
\[\frac{\log(1/\varepsilon)-\log((2+2^{1/p})(2015)!)}{\log(1/\varepsilon)}\mdim_{\Sigma,p}\left((2+2^{1/p})\cdot (2015)!\varespilon,\rho\right)\leq\limsup_{n\to \infty}\frac{\log S_{\varespilon}(X,\rho_{F_{n},p})}{|F_{n}|\log (1/\varespilon)},\]
and the proof is completed by letting $\varepsilon\to 0.$

\end{proof}

Metric mean dimension for actions of an amenable group $\Gamma$ as defined by Lindenstrauss and Weiss in \cite{LindWeiss} is given by
\[\mdim(X,\rho)=\liminf_{\varespilon\to 0}\limsup_{n\to \infty}\frac{\log S_{\varespilon}(X,\rho_{F_{n},\infty})}{|F_{n}|\log (1/\varespilon)},\]
where $(F_{n})_{n=1}^{\infty}$ is a F\o lner sequence for $\Gamma.$ Thus this Theorem may be regarded as an $\ell^{p}$ analogue  of Theorem 5.1 in \cite{Li}. Note that as in \cite{Li} Theorem 3.1, we have to restrict to infinite groups. It is remarked in \cite{Li} Remark 3.8 that sofic mean dimension does not agree with the usual mean dimension when the group acting is finite. We essentially face the same difficulty as in \cite{Li}. However,  we can actually say what our $\ell^{p}$-version of metric mean dimension is when the group acting is finite. Again we focus on the case $1\leq p<\infty,$ as the case $p=\infty$ is handled in \cite{Li} Theorem 5.1.

For the next proposition, we will need to avoid our usual procedure of dropping the dependence on $k$ when we use $\ell^{p}$-product metrics. Thus if $\Gamma$ is a finite group, if $X$ is a compact metrizable space with $\Gamma\actson X$ by homeomorphisms, if $\rho$ is a dynamically generating pseudometric on $X$ and $1\leq p<\infty,$ we shall use $\rho_{\Gamma,p,k}$ for the pseudometric on $X^{k}$ given by
\[\rho_{\Gamma,p,k}(x,y)^{p}=\frac{1}{|\Gamma|k}\sum_{g\in\Gamma}\sum_{j=1}^{k}\rho(gx(j),gy(j))^{p},\]
for a finite set $A$ we also use $\rho_{p,A}$ for the pseudometric on $X^{A}$ given by
\[\rho_{p,A}(x,y)^{p}=\frac{1}{|A|}\sum_{j\in A}\rho(x(j),y(j))^{p}.\]
 \begin{proposition}\label{P:fintiegroupssuck} Let $\Gamma$ be a finite group, and $X$ a compact metrizable space with $\Gamma\actson X$ by homeomorphisms. Let $\rho$ be a dynamically generating continuous pseudometric on $X$ and $1\leq p<\infty.$ Then

 (i):
 \[\lim_{k\to\infty}\frac{\log S_{\varepsilon}(X^{k},\rho_{\Gamma,p,k})}{k}\]
 exists and is
 \[\inf_{k}\frac{\log S_{\varepsilon}(X^{k},\rho_{\Gamma,p,k})}{k}.\]

(ii): We have
\[\mdim_{p}(X,\rho)=\frac{1}{|\Gamma|}\liminf_{\varepsilon\to 0}\lim_{k\to\infty}\frac{\log S_{\varepsilon}(X^{k},\rho_{\Gamma,p,k})}{k\log(1/\varepsilon)}.\]

 \end{proposition}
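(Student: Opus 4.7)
For part (i), the plan is to establish subadditivity of $a_k := \log S_\varepsilon(X^k,\rho_{\Gamma,p,k})$ and invoke Fekete's lemma. Given $\varepsilon$-dense subsets $E_k\subseteq X^k$ and $E_m\subseteq X^m$ in the respective pseudometrics, the identity
\[\rho_{\Gamma,p,k+m}((x',x''),(y',y''))^p = \frac{k}{k+m}\rho_{\Gamma,p,k}(x',y')^p + \frac{m}{k+m}\rho_{\Gamma,p,m}(x'',y'')^p\]
shows that $E_k\times E_m$ is $\varepsilon$-dense in $X^{k+m}$, so $a_{k+m}\leq a_k+a_m$; Fekete's lemma then yields existence of the limit and its equality with the infimum.

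For (ii), by Proposition \ref{P:independence} I am free to pick any sofic approximation, so I take $\sigma_i\colon \Gamma \to S_{k_i|\Gamma|}$ to be the $k_i$-fold direct sum of the left regular representation, with $k_i\to\infty$. This is a genuine homomorphism (hence a sofic approximation) with $d_i = k_i|\Gamma|$ in which every orbit is free of size $|\Gamma|$. Fixing orbit representatives $j_1,\ldots,j_{k_i}$, equivariant maps $\{1,\ldots,d_i\}\to X$ are in bijection with $X^{k_i}$ via $\phi(\sigma_i(g)j_s) = gx_s$, and a direct computation shows that under this bijection the restriction of $\rho_p$ coincides exactly with $\rho_{\Gamma,p,k_i}$.

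The key step will be to show that elements of $\Map(\rho,\Gamma,\delta,\sigma_i)$ are $\rho_p$-close to equivariant maps. Given $\phi\in\Map(\rho,\Gamma,\delta,\sigma_i)$, I will pick orbit representatives $j_O$ uniformly at random in each orbit $O$ and set $\phi^{eq}(\sigma_i(g)j_O) := g\phi(j_O)$; computing the expectation over these choices gives
\[\EE\bigl[\rho_p(\phi,\phi^{eq})^p\bigr] = \frac{1}{|\Gamma|}\sum_{g\in\Gamma}\rho_p(\phi\circ\sigma_i(g),g\phi)^p.\]
Writing $M := \diam(X,\rho)$, the defining inequality $\rho_2(\phi\circ\sigma_i(g),g\phi)<\delta$ combined with $\rho\leq M$ gives $\rho_p(\phi\circ\sigma_i(g),g\phi)^p \leq \min(\delta^p, M^{p-2}\delta^2)$ (using $L^p$-monotonicity on probability spaces for $p\leq 2$ and the pointwise bound $\rho^p\leq M^{p-2}\rho^2$ for $p\geq 2$). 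Hence some deterministic choice of representatives realizes $\rho_p(\phi,\phi^{eq})\leq C_{p,M}\,\delta^{2/\max(p,2)}$, which tends to $0$ as $\delta\to 0$.

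Combining this closeness with the canonical perturbation inequality \eqref{E:canonicalperturbationinequality}, applied in both directions using the inclusion $X^{k_i}\subseteq \Map(\rho,\Gamma,\delta,\sigma_i)$ of equivariant maps, I will obtain a two-sided estimate of the form
\[S_{2\varepsilon}(X^{k_i},\rho_{\Gamma,p,k_i}) \leq S_\varepsilon(\Map(\rho,\Gamma,\delta,\sigma_i),\rho_p) \leq S_{\varepsilon/2 - C_{p,M}\delta^{2/\max(p,2)}}(X^{k_i},\rho_{\Gamma,p,k_i})\]
for $\delta$ sufficiently small. Dividing by $d_i\log(1/\varepsilon) = k_i|\Gamma|\log(1/\varepsilon)$, passing to $\limsup$ in $i$ (which becomes a genuine limit by part (i)), then sending $\delta\to 0$ and finally applying $\liminf_{\varepsilon\to 0}$---so that the factors of $2$ inside the logarithm wash out against the $\log(1/\varepsilon)$ normalization---will produce the claimed formula. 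The main obstacle will be the equivariantization step: the microstate hypothesis provides $\ell^2$-control on the equivariance defect while the conclusion demands $\ell^p$-control, and the conversion for $p>2$ requires and exploits the boundedness of $\rho$.
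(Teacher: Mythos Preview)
Your argument is correct and follows essentially the same route as the paper: subadditivity plus Fekete for (i), and for (ii) the same sofic approximation by direct sums of the regular representation, the same identification of equivariant maps with $X^{k}$ carrying $\rho_{p}$ to $\rho_{\Gamma,p,k}$, and the same $\ell^{2}\to\ell^{p}$ conversion via H\"{o}lder for $p\le 2$ and the pointwise bound $\rho^{p}\le M^{p-2}\rho^{2}$ for $p\ge 2$. The only cosmetic difference is that you equivariantize $\phi$ by choosing orbit representatives at random and appealing to the probabilistic method, whereas the paper simply fixes the representative $(e,j)$ in each orbit; this costs the paper an extra harmless factor of $|\Gamma|$ in the distance bound but avoids the averaging step, and either choice suffices.
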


\begin{proof}

(i): Set
\[a_{k}=\log S_{\varepsilon}(X^{k},\rho_{\Gamma,p,k}).\]
It suffices to show that $a_{k+l}\leq a_{k}+a_{l}.$ The desired result then follows from a well-known real analysis exercise. Fix $k,l\in \NN.$ Let $A_{1}\subseteq X^{k}$ be an $\varepsilon$-dense subset with respect to $\rho_{\Gamma,p,k}$ of minimal cardinality. Let $A_{2}\subseteq X^{l}$ be an $\varepsilon$-dense subset with respect to $\rho_{\Gamma,p,l}$ of minimal cardinality. For $x\in X^{k+l},$ let $x_{1}\in X^{k}$ be $x\big|_{\{1,\dots,k\}}$ and let $x_{2}\in X^{l}$ be defined by $x_{2}(j)=x(j+k).$ Let $S$ be the set of all $x\in X^{k+l}$ with $x_{j}\in A_{j}$ for $j=1,2.$ Given $y\in X^{k+l},$ choose $a\in A_{1},b\in A_{2}$ so that
\[\rho_{\Gamma,p,k}(y_{1},a)<\varepsilon\]
\[\rho_{\Gamma,p,l}(y_{2},b)<\varepsilon.\]
Let $x\in S$ be such that $x_{1}=a,x_{2}=b.$ Then,
\[\rho_{\Gamma,p,k+l}(x,y)<\varepsilon.\]
This shows that
\[S_{\varepsilon}(X^{k+l},\rho_{\Gamma,p,k+l})\leq S_{\varepsilon}(X^{k},\rho_{\Gamma,p,k})S_{\varepsilon}(X^{l},\rho_{\Gamma,p,l}).\]
Taking $\log$ of both sides we see that
\[a_{k+l}\leq a_{k}+a_{l}.\]

(ii): By Proposition \ref{P:independence} we can use any sofic approximation to compute $\mdim_{p}(X,\rho).$ Let
\[\sigma_{k}\colon \Gamma\to \Sym(\Gamma\times \{1,\dots,k\})\]
be given by
\[\sigma_{k}(g)(h,j)=(gh,j).\]
Then $\Sigma=(\sigma_{k})_{k=1}^{\infty}$ is a sofic approximation, and we will use this sofic approximation to do our calculation.

	Given $x\in X^{k},$ we let $\phi_{x}\colon \Gamma\times \{1,\dots,k\}\to X$ be defined by
\[\phi_{x}(g,j)=gx(j).\]
Then for all $\delta>0,$ and finite $F\subseteq \Gamma$ we have $\phi_{x}\in \Map(\rho,F,\delta,\sigma_{k}).$ Further
\[\rho_{p,\Gamma\times\{1,\dots,k\}}(\phi_{x},\phi_{y})=\rho_{\Gamma,p,k}(x,y).\]
Thus
\[S_{\varepsilon}(\Map(\rho,F,\delta,\sigma_{k}),\rho_{p,\Gamma\times\{1,\dots,k\}})\geq S_{2\varepsilon}(X^{k},\rho_{\Gamma,p,k}).\]
Dividing by $\log(1/2\varepsilon)|\Gamma|k,$ and letting $k\to\infty,$ then taking the infimum over $F,\delta$ and letting $\varepsilon\to 0$ we see that
\[\mdim_{\Sigma,p}(X,\rho)\geq \frac{1}{|\Gamma|}\liminf_{\varepsilon\to 0}\lim_{k\to\infty}\frac{\log S_{\varepsilon}(X^{k},\rho_{\Gamma,p,k})}{k\log(1/\varepsilon)}.\]

	We now return to the reverse inequality. Let $M$ be the diameter of $(X,\rho).$ Let $\varepsilon>0.$ Fix a $\delta>0.$ Given $\psi\in \Map(\rho,\Gamma,\delta,\sigma_{k})$ we let $x_{\psi}\in X^{k}$ be defined by
\[x_{\psi}(j)=\psi(e,j).\]
Then
\begin{align*}
\rho_{p,\Gamma\times\{1,\dots,k\}}(\phi_{x_{\psi}},\psi)^{p}&=\frac{1}{|\Gamma|k}\sum_{g\in\Gamma}\sum_{j=1}^{k}\rho(\psi(\sigma_{k}(g)(e,j)),g\phi(e,j))^{p}\\
&\leq \sum_{g\in\Gamma}\rho_{p,\Gamma\times\{1,\dots,k\}}(\psi\circ \sigma_{k}(g),g\psi)^{p}.
\end{align*}
If $p\leq 2,$ then by H\"{o}lder's inequality.
\[\rho_{p,\Gamma\times\{1,\dots,k\}}(\psi\circ \sigma_{k}(g),g\psi)\leq \rho_{2,\Gamma\times\{1,\dots,k\}}(\psi\circ\sigma_{k}(g),g\psi)<\delta,\]
Whereas if $p\geq 2,$ then
\[\rho_{p,\Gamma\times\{1,\dots,k\}}(\psi\circ\sigma_{k}(g),g\psi)^{p}\leq M^{p-2}\rho_{2,\Gamma\times\{1,\dots,k\}}(\psi\circ \sigma_{k}(g),g\psi)^{2}\leq M^{p-2}\delta^{2}.\]
In either case, we see that if $\delta$ is sufficiently small then
\[\rho_{p,\Gamma\times\{1,\dots,k\}}(\phi_{x_{\psi}},\psi)<\varepsilon\]
for all $\psi\in\Map(\rho,\Gamma,\delta,\sigma_{k}).$ Thus
\[\log S_{(2015)!\varepsilon}(\Map(\rho,\Gamma,\delta,\sigma_{k}),\rho_{p,\Gamma\times\{1,\dots,k\}})\leq \log S_{\varepsilon}(X^{k},\rho_{\Gamma,p,k}).\]
Dividing by $\log\left(\frac{1}{(2015)!\varepsilon}\right)|\Gamma|k,$ then letting $k\to\infty,$ then $\delta\to 0$ and then $\varepsilon\to 0$ we get the desired inequality.

\end{proof}

\section{Microstates Rank}\label{S:micr}

	Here we develop the notion of microstates rank as a middle road to the equality of von Neumann dimension and mean dimension. This is similar to \cite{LiLiang}, where the authors develop ``mean rank" as a middle road to proving the equality of  von Neumann-L\"{u}ck rank and mean dimension for amenable groups. Most of the ideas in this section are simply borrowed from \cite{Me}. Here, instead of using ``$\varepsilon$-dimension" we will use $\varepsilon$-covering numbers. However, the idea is essentially the same: we will recover von Neumann dimension by measuring the size of a microstates space of almost equivariant maps.

\begin{definition}\emph{ Let $\Gamma$ be a countable discrete sofic group, with sofic approximation $\sigma_{i}\colon \Gamma\to S_{d_{i}}.$ Let $A$ be a finitely generated $\ZZ(\Gamma)$-module, and $B$ a submodule of $A.$  Let $S=(a_{j})_{j=1}^{n}$ be a sequence in $A,$ such that $\Span\{ga_{j}:g\in \Gamma,1\leq j\leq n \}=A,$ and $T=(b_{j})_{j=1}^{\infty}$ a sequence in $B$ such that $\Span\{gb_{j}:g\in \Gamma,j\in \NN\}=B.$   For $F\subseteq \Gamma$ finite, $m\in \NN,\delta>0,$ we define $\Hom_{\Gamma}(S|T,F,m,\delta,\sigma_{i})$ to be the set of all abelian group homomorphisms $\Phi\colon A\to \ell^{2}_{\RR}(d_{i},\mu_{d_{i}})$ such that for all $1\leq j\leq n,1\leq k\leq m,g_{1},\cdots,g_{k}\in F$ we have}
\[\|\Phi(g_{1}\cdots g_{k}a_{j})-\sigma_{i}(g_{1})\cdots \sigma_{i}(g_{k})\Phi(a_{j})\|_{2}<\delta,\]
\[\|\Phi(a_{j})\|_{2}\leq 1,\mbox{ if $1\leq j\leq n$},\]
\[\|\Phi(b_{j})\|_{2}<\delta,\mbox{if $1\leq j\leq m$}.\]
\end{definition}

	If $\Gamma,S,T,A,B$ are as above, let $\rho_{A}$ be the pseudometric on abelian group homomorphisms defined by
\[\rho_{A}(\Phi,\Psi)^{2}=\sum_{j=1}^{n}\|\Phi(a_{j})-\Psi(a_{j})\|_{2}^{2}.\]

We now define the \emph{microstates rank} of $S$ given $T$ by
\[\micr(S|T)=\liminf_{\varepsilon\to 0}\inf_{F,m,\delta}\limsup_{i\to \infty}\frac{\log S_{\varepsilon}(\Hom_{\Gamma}(S|T,F,m,\delta,\sigma_{i}),\rho_{A})}{d_{i}\log(1/\varepsilon)}.\]
For getting the appropriate lower bound, we will need an approach closer to mean dimension (as opposed to metric mean dimension). For this,  we define the \emph{topological microstates rank} as follows. Set
\[\Xi(S|T,F,m,\delta,\sigma_{i})=\{(\Phi(a_{j}))_{j=1}^{n}:\Phi\in \Hom_{\Gamma}(S|T,F,m,\delta,\sigma_{i})\}.\]
Define

\[\tmicr(S|T)=\sup_{\varepsilon>0}\inf_{F,m,\delta}\limsup_{i\to \infty}\frac{1}{d_{i}}\wdim_{\varepsilon}([0,1/2]^{nd_{i}}\cap \Xi(S|T,F,m,\delta,\sigma_{i}),\|\cdot\|_{\infty}).\]
A priori, this definition depends on our choice on $S,T$ while we would really like a definition which only depends on the inclusion $B\subseteq A.$ However, we are really interested in the case $A=\ZZ(\Gamma)^{\oplus n},$ and for this we will be able to show that the microstates ranks is the von Neumann--L\"{u}ck rank of $A/B.$

	Let us recall and introduce some terminology. If $\Phi$ is a bounded operator on a Hilbert space, we use $|\Phi|=(\Phi^{*}\Phi)^{1/2}.$ If $\Phi$ is a normal matrix, and $A\subseteq \CC,$ we use
\[\chi_{A}(\Phi)=\sum_{\lambda\in A}\proj_{\ker(\Phi-\lambda I)},\]
where $\proj_{\ker(\Phi-\lambda I)}$ is the orthogonal projection onto $\ker(\Phi-\lambda I).$ Note that the above sum is finite. If $P,Q$ are orthogonal projections on a Hilbert space, then $P\wedge Q$ is the orthogonal projection onto $\im(P)\cap \im(Q).$ Additionally, $P\vee Q$ is the orthogonal projection onto $\overline{\im(P)+\im(Q)}.$ Lastly, we recall that if $A\in M_{n}(\CC),$ then $\tr(A)=\frac{1}{n}\Tr(A),$ and $\|A\|_{2}=(\tr(A^{*}A))^{1/2}.$
If $a\in \ZZ(\Gamma),j\in\{1,\dots,n\}$ we use $a\otimes e_{j}$ for the element in $\ZZ(\Gamma)^{\oplus n}$ which is $a$ in the $j^{th}$ coordinate and zero elsewhere.

\begin{proposition}\label{P:microstatesrank} Let $\Gamma$ be a sofic group, and $\Sigma$ a sofic approximation. Let $B\subseteq \ZZ(\Gamma)^{\oplus n}$ be a $\ZZ(\Gamma)$-submodule. Let $S=(e\otimes e_{1},\cdots,e\otimes e_{n}),$ and $T=(b_{j})_{j=1}^{\infty}$ a generating sequence for $B.$ Then
\[\tmicr(S|T)=\micr(S|T)=\vr(\ZZ(\Gamma)^{\oplus n}/B).\]
\end{proposition}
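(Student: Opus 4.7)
The plan is to identify the induced microstate space $\Xi(S|T,F,m,\delta,\sigma_i)$ with an approximate spectral subset of $\Ball(\ell^2_\RR(d_i,\mu_{d_i})^{\oplus n})$, and then invoke the von Neumann Dimension Lemma (Lemma \ref{L:vnDLemma}) and Lemma \ref{L:hilred}. I aim to establish the chain
$$\vr(\ZZ(\Gamma)^{\oplus n}/B)\leq \tmicr(S|T)\leq \micr(S|T)\leq \vr(\ZZ(\Gamma)^{\oplus n}/B),$$
in which the middle inequality is the analogue of Corollary \ref{C:mdimcomparison} and follows from (an appropriate variant of) Lemma \ref{L:subsetsofproducts} applied to the family $\{\Xi(S|T,F,m,\delta,\sigma_i)\cap [0,1/2]^{nd_i}\}_{F,m,\delta}$.

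The crucial observation is that $\ZZ(\Gamma)^{\oplus n}$ is free as an abelian group on $\{g\otimes e_k:g\in\Gamma,\,1\leq k\leq n\}$, so any tuple $\xi=(\xi_1,\dots,\xi_n)$ with $\|\xi_k\|_2\leq 1$ defines a $\ZZ$-linear map $\Phi_\xi$ by $\Phi_\xi(g\otimes e_k):=\sigma_i(g)\xi_k$. Soficity of $\Sigma$ makes the equivariance constraints in the definition of $\Hom_\Gamma$ automatic for $\Phi_\xi$ for large $i$. Writing $b_j=\sum_k\alpha_{j,k}\otimes e_k$ and letting $f^{(m)}\in M_{m,n}(\ZZ(\Gamma))$ have rows $\widetilde{b_j}$, a direct computation gives $\Phi_\xi(b_j)=(\sigma_i(f^{(m)})\xi)_j$. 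Conversely, for an arbitrary $\Phi\in\Hom_\Gamma(S|T,F,m,\delta,\sigma_i)$, almost-equivariance of $\Phi$ at the group elements appearing in $b_1,\dots,b_m$ forces $\Phi(b_j)$ to be close to $(\sigma_i(f^{(m)})\xi)_j$ where $\xi_k:=\Phi(e\otimes e_k)$. Thus
$$\Xi(S|T,F,m,\delta,\sigma_i)\approx\{\xi\in\Ball(\ell^2_\RR(d_i,\mu_{d_i})^{\oplus n}):\|\sigma_i(f^{(m)})\xi\|_2\leq C_m\delta\}$$
up to an $O(\delta)$ error in $\rho_A$, where $C_m$ depends only on the coefficients of $b_1,\dots,b_m$.

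For the upper bound $\micr\leq\vr$, the spectral theorem applied to $|\sigma_i(f^{(m)})|$ shows any such $\xi$ lies within $C_m\delta/\eta$ of $\chi_{[0,\eta]}(|\sigma_i(f^{(m)})|)\Ball$. Inequality (\ref{E:canonicalperturbationinequality}) together with Lemma \ref{L:vnDLemma} then yields $\micr(S|T)\leq\dim_{R(\Gamma)}(\ker\rho(f^{(m)}))$ for each $m$. Since $\bigcap_m\ker\rho(f^{(m)})=\ker\rho(f)=\mathcal{H}_B^\perp$, continuity of von Neumann dimension under decreasing intersections combined with Lemma \ref{L:hilred} gives $\inf_m\dim_{R(\Gamma)}(\ker\rho(f^{(m)}))=\vr(\ZZ(\Gamma)^{\oplus n}/B)$. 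For the lower bound $\tmicr\geq\vr$, one reverses the construction: each $\xi\in V_{\eta,i}:=\chi_{[0,\eta]}(|\sigma_i(f^{(m)})|)\RR^{nd_i}\cap\Ball$ produces a valid $\Phi_\xi\in\Hom_\Gamma$ once $\eta$ is small in terms of $\delta$, and $\dim V_{\eta,i}\approx d_i n\,\mu_{|f^{(m)}|}([0,\eta])$ by Lemma \ref{L:vnDLemma}. A classical width bound (compare Lemmas 6.4--6.5 of \cite{Li}) then gives $\tmicr(S|T)\geq\dim_{R(\Gamma)}(\ker\rho(f^{(m)}))$, and taking $\inf_m$ closes the chain. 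The main technical obstacle is this last step: one must exhibit an affine ball inside $V_{\eta,i}$ that remains inside $[0,1/2]^{nd_i}$ and whose image in $\Xi$ has genuine $k$-dimensional topological width. This requires a nondegeneracy argument on the projection of $V_{\eta,i}$ onto a suitable coordinate subset together with an affine shift, carried out in the spirit of \cite{Me}.
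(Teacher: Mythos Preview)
Your overall architecture---the chain $\vr\leq\tmicr\leq\micr\leq\vr$, the identification of $\Xi(S|T,F,m,\delta,\sigma_i)$ with an approximate spectral ball for $|\sigma_i(f^{(m)})|$, and the appeals to Lemmas \ref{L:vnDLemma} and \ref{L:hilred}---is exactly the route the paper takes. Two points, however, are not quite right as you have written them.

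First, the claim that ``soficity of $\Sigma$ makes the equivariance constraints automatic for $\Phi_\xi$ for large $i$'' is false for arbitrary $\xi$ in the unit ball. Soficity says that $\sigma_i(g_1\cdots g_k)-\sigma_i(g_1)\cdots\sigma_i(g_k)$ is small in normalized Hilbert--Schmidt norm, not in operator norm; a vector $\xi$ supported on the bad set of $\{1,\dots,d_i\}$ will violate the constraint $\|\Phi_\xi(g_1\cdots g_k\otimes e_j)-\sigma_i(g_1)\cdots\sigma_i(g_k)\Phi_\xi(e\otimes e_j)\|_2<\delta$. The paper repairs this by further intersecting with the spectral projections $\chi_{[0,\delta)}(1\otimes|\sigma_i(g_1)\cdots\sigma_i(g_{k+1})-\sigma_i(g_1\cdots g_{k+1})|^2)$ for $g_j\in F$; soficity then guarantees these projections have trace $n-o(1)$, so the dimension count is unaffected. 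You should include this cut in your definition of $V_{\eta,i}$.

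Second, your handling of the $\wdim_\varepsilon$ lower bound is too vague. The paper does not argue via projections onto coordinate subsets or Lemmas 6.4--6.5 of \cite{Li}; it invokes a clean result (Appendix A of \cite{Tsuka}, see also \cite{Gromov} p.~11 and \cite{Gourn1} Lemma 2.4) stating that for any linear subspace $W\subseteq\RR^N$ and $\varepsilon$ sufficiently small, $\wdim_\varepsilon(W\cap[0,c]^N,\|\cdot\|_\infty)\geq\dim_\RR W$. This single citation replaces the entire ``nondegeneracy argument'' you sketch, and is what you should use.
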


\begin{proof} Let $\Sigma=(\sigma_{i}\colon \Gamma\to S_{d_{i}}).$ Extend $\sigma_{i}$ to a map $\CC(\Gamma)\to M_{d_{i}}(\CC)$ by
\[\sigma_{i}(f)=\sum_{g\in \Gamma}\widehat{f}(g)\sigma_{i}(g),\]
and to a map $M_{n,m}(\CC(\Gamma))\to M_{n,m}(M_{d_{i}}(\CC))$ by
\[\sigma_{i}(f)_{jk}=\sigma_{i}(f_{jk}).\]
We shall prove that
\[\tmicr(S|T)\leq\micr(S|T),\]
\[\micr(S|T)\leq \vr(\ZZ(\Gamma)^{\oplus n}/B),\]
\[\vr(\ZZ(\Gamma)^{\oplus n}/B)\leq \tmicr(S|T).\]

	Define a continuous pseudometric $\Theta$ on $\TT^{n}$ by
\[\Theta(x,y)=\|x-y\|_{2},\]\
where the $\ell^{2}$-norm is with respect to the uniform probability measure.
By Lemma \ref{L:subsetsofproducts} we have that
\begin{align*}
\tmicr(S|T)&\leq \liminf_{\varepsilon\to 0}\inf_{F,m,\delta} \frac{1}{d_{i}}\limsup_{n\to\infty}\frac{\log S_{\varepsilon}(\Xi(S|T,F,m,\delta,\sigma_{i}),\Theta_{1})}{d_{i}\log(1/\varepsilon)}\\
&\leq  \liminf_{\varepsilon\to 0}\inf_{F,m,\delta} \frac{1}{d_{i}}\limsup_{n\to\infty}\frac{\log S_{\varepsilon}(\Xi(S|T,F,m,\delta,\sigma_{i}),\Theta_{2})}{d_{i}\log(1/\varepsilon)}
\end{align*}
 the last line following since $\|\cdot\|_{\ell^{1}(d_{i},\mu_{d_{i}})}\leq\|\cdot\|_{\ell^{2}(d_{i},\mu_{d_{i}})}.$ It is easy to see that
 \[\liminf_{\varepsilon\to 0}\inf_{F,m,\delta} \frac{1}{d_{i}}\limsup_{n\to\infty}\frac{\log S_{\varepsilon}(\Xi(S|T,F,m,\delta,\sigma_{i}),\Theta_{2})}{d_{i}\log(1/\varepsilon)}=\micr(S|T).\]
 So this proves one inequality.

Let us prove that $\tmicr(S|T)\geq \vr(\ZZ(\Gamma)^{\oplus n}/B).$  We will wish to regard $L(\Gamma)^{\oplus n},\ell^{2}(\Gamma)^{\oplus n},\ell^{2}(d_{i},u_{d_{i}})^{\oplus n}$ as column vectors. However, we will have to use elements of $\RR(\Gamma)^{\oplus n}$ to induce operators
\[\ell^{2}(\Gamma)^{\oplus n}\to \ell^{2}(\Gamma).\]
For this, recall that if we are given
\[\alpha=\begin{bmatrix}
\alpha_{1}\\
\vdots\\
\alpha_{n}
\end{bmatrix}\in L(\Gamma)^{\oplus n}\]
we set
\[\widetidle{\alpha}=\begin{bmatrix}
\alpha_{1}& \alpha_{2}&\cdots &\alpha_{n}
\end{bmatrix}.\]

	Given $T\in M_{d_{i}}(\RR)$ we define $1\otimes T\in M_{n}(M_{d_{i}}(\RR))$ by
\[(1\otimes T)_{ij}=\begin{cases}
T,& \textnormal{ if $i=j$}\\
0,& \textnormal{ if $i\ne j$.}
\end{cases}.\]
For a finite subset $F$ of $\Gamma$ containing the identity, $m\in \NN,\delta>0$ let $P_{F,m}(\delta,i)\in M_{n}(M_{d_{i}}(\RR))$ be the orthogonal projection
\[\chi_{[0,\delta)}\left(\sum_{j=1}^{m}\sigma_{i}(\widetilde{b_{j}})^{*}\sigma_{i}(\widetilde{b_{j}})\right)\wedge \bigwedge_{\substack{g_{1},\cdots,g_{k+1}\in F,\\1\leq k\leq m}}\chi_{[0,\delta)}(1\otimes |\sigma_{i}(g_{1})\cdots \sigma_{i}(g_{k+1})-\sigma_{i}(g_{1}\cdots g_{k+1})|^{2})\]
and
\[W_{F,m}(\delta,i)=P_{F,m}(\delta,i)\ell_{\RR}^{2}(d_{i},\mu_{d_{i}})^{\oplus n}.\]
For $\xi\in [0,1/2\sqrt{n}]^{nd_{i}}\cap W_{F,m}(\delta,i),$ define $\Phi_{\xi}\colon \ZZ(\Gamma)^{\oplus n}\to \ell^{2}_{\RR}(d_{i},\mu_{d_{i}})$ by
\[\Phi_{\xi}(f)=\sigma_{i}(\widetilde{f})\xi,\]
again regarding $L(\Gamma)^{\oplus n}\subseteq M_{1,n}(L(\Gamma)).$

	Then for $k\leq m,$
\[\|\Phi_{\xi}(b_{k})\|_{2}^{2}=\ip{\sigma_{i}(\widetilde{b_{k}})^{*}\sigma_{i}(\widetilde{b_{k}})\xi,\xi}=\ip{|\sigma_{i}(\widetilde{b_{k}})|^{2}\xi,\xi}<\delta .\]
Similarly, for $1\leq j\leq n,1\leq k\leq m$ and $g_{1},\dots ,g_{k}\in F,$
\begin{align*}
\|\sigma_{i}(g_{1})\cdots\sigma_{i}(g_{k}) \Phi_{\xi}(e\otimes e_{j})-\Phi_{\xi}(g_{1}\cdots g_{k}\otimes e_{j})\|_{2}^{2}&=\|\sigma_{i}(g_{1})\cdots \sigma_{i}(g_{k})\sigma_{i}(e)(\xi(j))-\sigma_{i}(g_{1}\cdots g_{k})\xi(j)\|_{2}^{2}\\
&\leq \|(1\otimes (\sigma_{i}(g_{1})\cdots \sigma_{i}(g_{k})\sigma_{i}(e)-\sigma_{i}(g_{1}\cdots g_{k})))\xi\|_{2}^{2}\\
&=\ip{(1\otimes |(\sigma_{i}(g_{1})\cdots \sigma_{i}(g_{k})\sigma_{i}(e)-\sigma_{i}(g_{1}\cdots g_{k}))|^{2})\xi,\xi} \\
 &<\delta .
\end{align*}
Thus,
\begin{align*}
\wdim_{\varepsilon}(\Xi(S|T,F,m,\sqrt{\delta},\sigma_{i})\cap [0,1/2]^{nd_{i}},\|\cdot\|_{\infty})&\geq \wdim_{\varepsilon}\left((1\otimes\sigma_{i}(e))\left(W_{F,m}(\delta,i)\cap\left[0,\frac{1}{2\sqrt{n}}\right]^{nd_{i}}\right),\|\cdot\|_{\infty}\right)\\
&=\wdim_{\varepsilon}\left(W_{F,m}(\delta,i)\cap \left[0,\frac{1}{2\sqrt{n}}\right]^{nd_{i}},\|\cdot\|_{\infty}\right),
\end{align*}
as $\sigma_{i}(e)$ is a permutation.

By \cite{Tsuka} Appendix A (see additionally \cite{Gromov} page 11, and \cite{Gourn1} Lemma 2.4) we have
\[\wdim_{\varepsilon}\left(W_{F,m}(\delta,i)\cap \left[0,\frac{1}{2\sqrt{n}}\right]^{nd_{i}},\|\cdot\|_{\infty}\right)\geq \dim(W_{F,m}(\delta,i))\]
if $\varepsilon<\frac{1}{4\sqrt{n}}.$
By soficity,
\[\|\sigma_{i}(g_{1})\cdots\sigma_{i}(g_{k})\sigma_{i}(e)-\sigma_{i}(g_{1}\cdots g_{k})\|_{2}\to 0,\]
and
\[\tr\left(\chi_{(\delta,\infty)}(|\sigma_{i}(g_{1})\cdots\sigma_{i}(g_{k})\sigma_{i}(e)-\sigma_{i}(g_{1}\cdots g_{k})|^{2}\right))\leq\frac{1}{\delta}\|\sigma_{i}(g_{1})\cdots\sigma_{i}(g_{k})\sigma_{i}(e)-\sigma_{i}(g_{1}\cdots g_{k})\|_{2}^{2}.\]
Thus
\[\limsup_{i\to \infty}\frac{1}{d_{i}}\dim(W_{F,m}(\delta,i))=\limsup_{i\to \infty}\tr\otimes \Tr\left(\chi_{[0,\delta)}\left(\sum_{j=1}^{m}\sigma_{i}(\widetilde{b_{j}})^{*}\sigma_{i}(\widetilde{b_{j}})\right)\right).\]
We have
\[\left\|\sum_{j=1}^{m}\sigma_{i}((\widetilde{b_{j}})^{*}b_{j})-\sum_{j=1}^{m}\sigma_{i}(\widetilde{b_{j}})^{*}\sigma_{i}(\widetilde{b_{j}})\right\|_{2}\to 0.\]
Applying Lemma $\ref{L:weak^{*}}$, we see that the spectral measure with respect to $\tr$ of
\[\sum_{j=1}^{m}\sigma_{i}(\widetilde{b_{j}})^{*}\sigma_{i}(\widetilde{b_{j}})\]
converges weak$^{*}$ to the spectral measure with respect to the group trace on $L(\Gamma)$ of
\[\sum_{j=1}^{m}(\widetilde{b_{j}})^{*}\widetilde{b_{j}}.\]
Hence, we may argue as in the von Neumann dimension Lemma (Lemma \ref{L:vnDLemma})  to show that
\begin{align*}
\inf_{m,\delta}\limsup_{i\to \infty}\tr\otimes \Tr\left(\chi_{[0,\delta)}\left(\sum_{j=1}^{m}\sigma_{i}(\widetilde{b_{j}})^{*}\sigma_{i}(\widetilde{b_{j}})\right)\right)&\geq \inf_{m}\dim_{R(\Gamma)}\left(\ker \sum_{j=1}^{m}\rho(\widetilde{b_{j}})^{*}\rho(\widetilde{b_{j}})\right)\\
&=n-\sup_{m}\dim_{R(\Gamma)}\left(\overline{\im \sum_{j=1}^{n}\rho(\widetilde{b_{j}})^{*}\rho(\widetilde{b_{j}})}\right).
\end{align*}
For $\alpha\in \CC(\Gamma),$  and $1\leq j\leq k$ we have
\[\rho(\widetilde{b_{j}})^{*}\alpha=\begin{bmatrix}
\alpha b_{j}(1)\\
\alpha b_{j}(2)\\
\vdots\\
\alpha b_{j}(k)
\end{bmatrix}\]
with
\[b_{j}=\begin{bmatrix}
b_{j}(1)\\
b_{j}(2)\\
\vdots\\
b_{j}(k)
\end{bmatrix}.\]
It thus easily follows that
\[\overline{\im \sum_{j=1}^{n}\rho(\widetilde{b_{j}})^{*}\rho(\widetilde{b_{j}})}\subseteq \mathcal{H}_{B}.\]
where $\mathcal{H}_{B}$ is defined as in Lemma \ref{L:hilred}. This implies that
\[\tmicr(S|T)\geq \vr(\ZZ(\Gamma)^{\oplus n}/B),\]
by Lemma \ref{L:hilred}.

	For the inequality
	\[\micr(S|T)\leq \vr(\ZZ(\Gamma)^{\oplus n}/B),\]
 fix $\eta>0,k\in \NN.$ Let $f\in M_{k,n}(\ZZ(\Gamma))$ be defined by
	\[f=\begin{bmatrix}
\widetilde{b_{1}}\\
\widetilde{b_{2}}\\
\vdots\\
\widetilde{b_{k}}
\end{bmatrix}.\]
 Note that if $F,m$ are big enough, and $\delta>0$ is small enough then for all $\Phi\in \Hom_{\Gamma}(S|T,F,m,\delta,\sigma_{i})$ we have
\[\left\|\sigma_{i}(f)\begin{bmatrix}
\Phi(e\otimes e_{1})\\
\Phi(e\otimes e_{2})\\
\vdots\\
\Phi(e\otimes e_{n})
\end{bmatrix}\right\|_{2}<\eta.\]
Therefore
\begin{align*}\left\|\chi_{[0,\sqrt{\eta}]}(|\sigma_{i}(f)|)\begin{bmatrix}
\Phi(e\otimes e_{1})\\
\Phi(e\otimes e_{2})\\
\vdots\\
\Phi(e\otimes e_{n})
\end{bmatrix}-\begin{bmatrix}
\Phi(e\otimes e_{1})\\
\Phi(e\otimes e_{2})\\
\vdots\\
\Phi(e\otimes e_{n})
\end{bmatrix}\right\|_{2}&=
\left\|\chi_{(\sqrt{\eta},\infty)}(|\sigma_{i}(f)|)\begin{bmatrix}
\Phi(e\otimes e_{1})\\
\Phi(e\otimes e_{2})\\
\vdots\\
\Phi(e\otimes e_{n})\end{bmatrix}\right\|_{2}\\
&\leq \frac{1}{\sqrt{\eta}}\left\|\sigma_{i}(f)\begin{bmatrix}
\Phi(e\otimes e_{1})\\
\Phi(e\otimes e_{2})\\
\vdots\\
\Phi(e\otimes e_{n})\end{bmatrix}\right\|_{2}\\
&<\sqrt{\eta}.\end{align*}
It  follows that
\begin{align*}
\micr(S|T)&\leq \liminf_{\varepsilon\to 0}  \inf_{\eta}\limsup_{i\to \infty}\frac{\log S_{\varespilon}\left(\chi_{[0,\eta]}(|\sigma_{i}(f)|)\sqrt{n}\Ball(\ell^{2}_{\RR}(d_{i},\mu_{d_{i}})^{\oplus n}),\|\cdot\|_{\ell^{2}(d_{i},\mu_{d_{i}})^{\oplus n}}\right)}{d_{i}\log(1/\varepsilon)}\\
&=\dim_{R(\Gamma)}(\ker \rho(f))=n-\dim_{R(\Gamma)}(\ker\rho(f)^{\perp}).
\end{align*}
where in the last line we use the von Neumann Dimension Lemma (Lemma \ref{L:vnDLemma}).

	Now,
\[\ker(\rho(f))^{\perp}=\overline{\im\rho(f^{*})}.\]
For $x\in L(\Gamma),$ define $1\otimes x\in M_{n}(L(\Gamma))$ by
\[(1\otimes x)_{ij}=\begin{cases}
x,& \textnormal{ if $i=j$,}\\
0, &\textnormal{ if $i\ne j.$}
\end{cases}\]
As in Lemma \ref{L:hilred}, we have
\[\overline{\im\rho(f^{*})}=\overline{\left\{\sum_{s=1}^{k}(1\otimes \alpha_{s})b_{s}:\alpha_{s}\in \CC(\Gamma), 1\leq s\leq k\right\}}.\]
It follows that as $k\to \infty,$
\[P_{\overline{\im\rho(f^{*})}}\to P_{\mathcal{H}_{B}}\]
in the strong operator topology (here $P_{\mathcal{H}}$ is the projection onto the subspace $\mathcal{H}$). Thus it remains to apply Lemma
\ref{L:hilred}.

\end{proof}

	The main application of topological microstates rank as defined above is to show that
\[\mdim_{\Sigma}(\widehat{A},\Gamma)\geq \vr(A).\]

	It is also an analogue of Peters' result in \cite{Peters} on computing entropy of an algebraic action in terms of the action on the dual module. Interestingly, there is no known analogue for computing entropy in the sofic case (however, see \cite{Me5}  Proposition 2.6 for a similar formula in a special case). It is also an analogue of mean rank as defined in \cite{LiLiang}.

\section{Relative Mean Dimension}\label{S:RMD}

	For our proof, it will be useful to define relative mean dimension for  a submodule $B\subseteq A.$ Thinking of mean dimension as analogous to entropy, relative mean dimension should be analogous to entropy of $\Gamma \actson \widehat{A},$ given $\Gamma\actson \widehat{B}.$ It will essentially be defined by taking the microstates for $\Gamma\actson \widehat{A}$ and asking that as elements of $\widehat{A}^{d_{i}},$ they are small on $B.$ This is analogous to the author's approach to the proof of invariance in \cite{Me} (see for example Definition 2.11 in \cite{Me}).  We  will show that the relative mean dimension of $B\subseteq A$  is the mean dimension of $\widehat{A/B}.$

	The main case of interest for relative mean dimension is $A=\ZZ(\Gamma)^{\oplus n},$ where it is easy to construct microstates $\phi\colon \{1,\cdots,d_{i}\}\to \widehat{A}=(\TT^{n})^{\Gamma}.$ For example, take any $\xi\in (\TT^{d_{i}})^{n},$ and define $\phi(j)(g)(l)=\xi(l)(\sigma_{i}(g^{-1})(j)),$ and this will be a microstate for enough values of $\xi$ to prove that $\mdim_{\Sigma}(\widehat{A},\Gamma)=n.$  However, it is quite hard (potentially impossible) to force the microstates $\phi\colon \{1,\cdots,d_{i}\}\to \widehat{A}$ defined above to take values in $\widehat{A/B}.$ It turns out to be quite simple to force $\phi$ to be small on $B.$  For example, let $\Phi\in \Hom_{\Gamma}(S|T,F,m,\delta,\sigma_{i})$ as in the preceding section and define $\xi(j)=\Phi(e_{j})+\ZZ^{d_{i}},$ and this will work. By the preceding section, there are \emph{many} such $\Phi,$ and this will turn out to give us enough elements to get us the lower bound on metric mean dimension.

Recall, that if $\|\cdot\|$ is a norm on $\RR^{n},$ then we will also use $\|\cdot\|$ for the dynamically generating continuous pseudometric on $(\TT^{n})^{\Gamma},$ so that the distance between $f$ and $g$ is
\[\|f(e)-g(e)+\ZZ^{n}\|,\]
in particular
\[|x+\ZZ|=\inf_{k\in \ZZ}|x+k|.\]

\begin{definition} \emph{Let $\Gamma$ be a sofic group with sofic approximation $\Sigma.$ Let $B\subseteq A$ be $\ZZ(\Gamma)$-modules. Fix a dynamically generating continuous pseudometric $\rho$ on $\widehat{A},$ and $T=(b_{j})_{j=1}^{\infty}$ a sequence in $B$ so that $\{gb_{j}:g\in \Gamma,j\in \NN\}$ generates $B$ as an abelian group. For $F\subseteq \Gamma$ finite, $m\in \NN,\delta>0,$ let $\Map(\rho|T,F,m,\delta,\sigma_{i})$ to be the set of all $\phi\in \Map(\rho,F,\delta,\sigma_{i})$ such that}
\[\sup_{1\leq k\leq m}\frac{1}{d_{i}}\sum_{j=1}^{d_{i}}|\phi(j)(b_{k})|^{2}<\delta^{2}.\]
\end{definition}
With notation as above, we define the $p$-mean dimension of $A$ given $T$ with respect to $\rho$ by
\[\mdim_{\Sigma,p}(\rho|T)=\liminf_{\varepsilon\to 0}\inf_{F,m,\delta}\limsup_{i\to \infty}\frac{\log S_{\varepsilon}(\Map(\rho|T,F,m,\delta,\sigma_{i}),\rho_{p})}{d_{i}\log(1/\varespilon)}.\]

	Finally, define the $p$-metric mean dimension of $A$ given $T$ by
\[\mdim_{\Sigma,p}(A|T,\Gamma)=\inf_{\rho}\mdim_{\Sigma,p}(\rho|T),\]
where the infimum is over all dynamically generating continuous pseudometrics. We will show that for all$ 1\leq p<\infty,$ and for any dynamically generating continuous pseudometric $\rho$ we have,
\[\mdim_{\Sigma,p}(\rho|T)=\mdim_{\Sigma,p}(\widehat{A/B},\rho\big|_{\widehat{A/B}}).\]

\begin{lemma}\label{L:smallquo} Let $B\subseteq A$ be discrete abelian groups, and let $\rho$ be a continuous pseudometric on $\widehat{A}.$  Let $S \subseteq B$ be a  subset such that $\left\{\sum_{j=1}^{n}a_{j}:a_{j}\in S\right\}=B.$ Then for all $\varepsilon>0$, there is an $E\subseteq S$ finite, and a $\delta>0$ so that whenever $\chi\in\widehat{A},$ and $|\chi(b)|<\delta$ for all $b\in E,$  there is a $\widetilde{\chi}\in \widehat{A/B}$ with
\[\rho(\widetidle{\chi},\chi)<\varepsilon,\]
(considering $\widehat{A/B}\subseteq \widehat{A}$ in the usual way).

\end{lemma}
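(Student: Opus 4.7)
The plan is to use Pontryagin duality together with a compactness argument. Identify $\widehat{A/B}$ as the closed subgroup $\{\chi \in \widehat{A} : \chi|_B \equiv 0\}$ of $\widehat{A}$, and let $r\colon \widehat{A} \to \widehat{B}$ denote the restriction homomorphism, which is continuous with kernel $\widehat{A/B}$. The topology on $\widehat{B}$ has a neighborhood basis at $0$ given by sets of the form $U(E',\delta') = \{\psi \in \widehat{B} : |\psi(b)| < \delta' \text{ for all } b \in E'\}$ with $E' \subseteq B$ finite and $\delta'>0$.

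First I would fix $\varepsilon>0$ and set $V = \{\chi \in \widehat{A} : \rho(\chi,\widetilde{\chi}) < \varepsilon \text{ for some } \widetilde{\chi} \in \widehat{A/B}\}$, which is open (a union of $\rho$-balls of radius $\varepsilon$). Then $K = \widehat{A}\setminus V$ is closed, hence compact, and disjoint from $\widehat{A/B}$ since the pseudometric distance of any $\widetilde{\chi} \in \widehat{A/B}$ to itself is $0$. In particular, $r(K)$ is a compact subset of $\widehat{B}$ that does not contain $0$.

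Now I would use the definition of the topology on $\widehat{B}$: since $\{0\}$ and $r(K)$ are disjoint closed sets, and the sets $U(E',\delta')$ form a neighborhood basis at $0$, I can find finite $E' \subseteq B$ and $\delta' > 0$ with $U(E',\delta') \cap r(K) = \emptyset$. The hypothesis $\{\sum_{j=1}^n a_j : a_j \in S\} = B$ lets me replace $E'$ by a finite subset of $S$: write each $b \in E'$ as $b = s_{b,1} + \cdots + s_{b,n_b}$ with $s_{b,j} \in S$, set $E = \{s_{b,j} : b \in E',\, 1\le j\le n_b\} \subseteq S$ and $N = \max_b n_b$. Then choose $\delta = \delta'/N$.

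The conclusion follows from the triangle inequality for $|\cdot|$ on $\TT = \RR/\ZZ$: if $|\chi(s)| < \delta$ for all $s \in E$, then for each $b \in E'$ we get $|\chi(b)| \le \sum_j |\chi(s_{b,j})| < N\delta = \delta'$, so $r(\chi) \in U(E',\delta')$, hence $r(\chi) \notin r(K)$, hence $\chi \notin K$, hence $\chi \in V$, producing the desired $\widetilde{\chi} \in \widehat{A/B}$ with $\rho(\widetilde{\chi},\chi)<\varepsilon$. The only step requiring thought is the passage from $E' \subseteq B$ to $E \subseteq S$, which is purely algebraic and uses precisely the generating hypothesis on $S$; everything else is standard compactness and Pontryagin duality.
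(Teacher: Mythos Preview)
Your argument is correct and is essentially the same compactness argument the paper has in mind: the paper merely says ``proved by contradiction, using that $\widehat{A}$ is compact'' and leaves the rest as an exercise, which your write-up carries out directly rather than by contradiction. The only cosmetic difference is packaging---your direct argument (take $K=\widehat{A}\setminus V$, push forward by $r$, and separate $0$ from $r(K)$ by a basic neighborhood) is logically equivalent to assuming the conclusion fails, extracting a convergent net of bad $\chi$'s, and deriving a contradiction from the limit lying in $\widehat{A/B}$.
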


\begin{proof} This is proved by contradiction, using that $\widehat{A}$ is compact and will be left as an exercise to the reader.

\end{proof}

\begin{proposition}\label{P:relativemeandimension} Let $\Gamma$ be a sofic group with sofic approximation $\Sigma.$ Let $B\subseteq A$ be countable $\ZZ(\Gamma)$-modules. Let $T=(b_{j})_{j=1}^{\infty}$  be a sequence in $B$ such that $\{gb_{j}:j\in \NN,g\in \Gamma\}$ generates $B$ as an abelian group. For any dynamically generating continuous pseudometric $\rho$ on $\widehat{A}$ and $1\leq p<\infty,$ we have
\[\mdim_{\Sigma,p}(\rho|T)=\mdim_{\Sigma,p}(\widehat{A/B},\rho\big|_{\widehat{A/B}}).\]

\end{proposition}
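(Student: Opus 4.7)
The plan is to prove the two inequalities separately.

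For the inequality $\mdim_{\Sigma,p}(\widehat{A/B}, \rho|_{\widehat{A/B}}) \leq \mdim_{\Sigma,p}(\rho|T)$, I identify $\widehat{A/B}$ with the annihilator of $B$ in $\widehat{A}$. Any $\psi \in \Map(\rho|_{\widehat{A/B}}, F, \delta, \sigma_i)$, viewed as a map into $\widehat{A}$, lies in $\Map(\rho, F, \delta, \sigma_i)$ (since $\rho|_{\widehat{A/B}}$ is just $\rho$ restricted) and satisfies $\psi(j)(b_k) = 0$ for all $j, k$. Hence $\psi \in \Map(\rho|T, F, m, \delta, \sigma_i)$ for every $m$, so $S_\varepsilon(\Map(\rho|_{\widehat{A/B}}, F, \delta, \sigma_i), \rho_p) \leq S_\varepsilon(\Map(\rho|T, F, m, \delta, \sigma_i), \rho_p)$, and the inequality follows after taking the appropriate infima and limits.

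For the reverse inequality, fix $\varepsilon > 0$ and a finite $F_1 \subseteq \Gamma$ containing $e$. The function $\rho_{F_1}(\chi_1, \chi_2) := \max_{g \in F_1}\rho(g\chi_1, g\chi_2)$ is itself a continuous pseudometric on $\widehat{A}$, so Lemma \ref{L:smallquo} applied to $\rho_{F_1}$ with the generating set $\{gb_k : g \in \Gamma, k \in \NN\}$ of $B$ yields a finite $E = \{g_l b_{k_l}\}_{l=1}^L$ and $\delta_0 > 0$ such that $|\chi(b)| < \delta_0$ for all $b \in E$ guarantees the existence of $\widetilde\chi \in \widehat{A/B}$ with $\max_{g \in F_1}\rho(g\chi, g\widetilde\chi) < \varepsilon$. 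For $\phi \in \Map(\rho|T, F, m, \delta, \sigma_i)$ with $F$ large (containing $F_1$, the $g_l^{-1}$, and the products specified below), $m \geq \max_l k_l$, and $\delta$ small, I define $\psi \colon \{1, \ldots, d_i\} \to \widehat{A/B}$ by $\psi(j) = \widetilde{\phi(j)}$ on the good set $J = \{j : |\phi(j)(b)| < \delta_0 \text{ for all } b \in E\}$ and $\psi(j) = 0$ elsewhere.

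The main technical step is bounding $|J^c|/d_i$. Using $\phi(j)(g_l b_{k_l}) = (g_l^{-1}\phi(j))(b_{k_l})$ and the decomposition
\[(g_l^{-1}\phi(j))(b_{k_l}) = \phi(\sigma_i(g_l^{-1})j)(b_{k_l}) + \bigl(g_l^{-1}\phi(j) - \phi(\sigma_i(g_l^{-1})j)\bigr)(b_{k_l}),\]
the first term has $\ell^2$-average in $j$ at most $\delta$ by permutation-invariance of the mean and $k_l \leq m$. For the second term I would use a uniform-continuity argument: since evaluation at $b_{k_l}$ is continuous on the compact space $\widehat{A}$ and $\rho$ is dynamically generating, for every $\eta > 0$ there exist a finite $F_* \subseteq \Gamma$ and $\eta' > 0$ so that $\max_{h \in F_*}\rho(h\chi_1, h\chi_2) < \eta'$ implies $|\chi_1(b_{k_l}) - \chi_2(b_{k_l})| < \eta$ for every $l$ (proved by compactness and a diagonal sequence argument). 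Combined with the approximate equivariance of $\phi$ on the enlarged finite set $F \supseteq F_* \cup \{h g_l^{-1} : h \in F_*, 1 \leq l \leq L\}$ and soficity (to replace $\sigma_i(hg_l^{-1})$ by $\sigma_i(h)\sigma_i(g_l^{-1})$ on most $j$), Markov's inequality gives $\frac{1}{d_i}\sum_j |\phi(j)(g_l b_{k_l})|^2 = O(\eta^2 + \delta^2)$, and a second Markov bound yields $|J^c|/d_i = O(\delta_0^{-2}(\eta^2 + \delta^2))$, which becomes arbitrarily small after fixing $F_*$ and then tuning $\eta, \delta$.

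Finally, $\rho(\phi(j), \psi(j)) < \varepsilon$ on $J$ (as $e \in F_1$) and is at most $M = \diam(\widehat{A}, \rho)$ on $J^c$, so $\rho_p(\phi, \psi) \leq \varepsilon + M(|J^c|/d_i)^{1/p}$. Combined with the triangle inequality $\rho_2(\psi\circ\sigma_i(g), g\psi) \leq \rho_2(\psi, \phi) + \rho_2(\phi\circ\sigma_i(g), g\phi) + \rho_2(g\phi, g\psi)$ for $g \in F_1$ and the strengthened Lemma \ref{L:smallquo} bound $\rho(g\phi(j), g\psi(j)) < \varepsilon$ on $J$ (using the elementary conversion $\rho_2 \leq \rho_p$ when $p \geq 2$ and $\rho_2 \leq M^{(2-p)/2}\rho_p^{p/2}$ when $p < 2$ to control the $\rho_2(\psi,\phi)$ term), one obtains $\psi \in \Map(\rho|_{\widehat{A/B}}, F_1, \delta'', \sigma_i)$ for some $\delta''$ tending to zero with $\varepsilon, \delta$. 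This shows that $\Map(\rho|T, F, m, \delta, \sigma_i)$ lies in a $C\varepsilon$-neighborhood of $\Map(\rho|_{\widehat{A/B}}, F_1, \delta'', \sigma_i)$ in $\rho_p$, and \eqref{E:canonicalperturbationinequality} finishes the proof. The main obstacle is the compactness-based uniform-continuity step, which is needed because $\rho$ does not a priori dominate the evaluation maps $\chi \mapsto \chi(b_{k_l})$.
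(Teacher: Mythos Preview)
Your proposal is correct and follows essentially the same strategy as the paper: the easy inclusion $\Map(\rho|_{\widehat{A/B}},F,\delta,\sigma_i)\subseteq \Map(\rho|T,F,m,\delta,\sigma_i)$ gives one inequality, and for the other you use Lemma~\ref{L:smallquo} to replace a microstate $\phi$ that is small on $T$ by one taking values in $\widehat{A/B}$ on a large set of indices.

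The only real difference is in how you control $|\phi(j)(g_l b_{k_l})|$ for the translates $g_l b_{k_l}$ appearing in the finite set $E$. You do this by a direct compactness/uniform continuity argument: since $\rho$ is dynamically generating, for a suitable finite $F_*$ the pseudometric $\max_{h\in F_*}\rho(h\cdot,h\cdot)$ controls evaluation at each $b_{k_l}$, and then you combine this with the approximate equivariance of $\phi$ and soficity. The paper packages this same idea more cleanly by introducing the auxiliary dynamically generating pseudometric
\[d(\chi_1,\chi_2)=\rho(\chi_1,\chi_2)+\Bigl(\sum_{j=1}^{m}|\chi_1(b_j)-\chi_2(b_j)|^2\Bigr)^{1/2}\]
and then invoking Lemma~\ref{L:heart} to pass from $\Map(\rho,\dots)$ to $\Map(d,\dots)$; this immediately gives pointwise smallness of $\phi(\sigma_i(h^{-1})j)(b_k)$ on most $j$, avoiding the explicit uniform continuity step. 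Since Lemma~\ref{L:heart} is itself proved by the same compactness argument, the two routes are equivalent in content; the paper's version is shorter, while yours makes the mechanism more visible. Your $\rho_p$-to-$\rho_2$ conversion at the end is unnecessary, since the $J/J^c$ decomposition already bounds $\rho_2(g\phi,g\psi)$ directly for every $g\in F_1$.
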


\begin{proof} Throughout the proof, we shall view $\widehat{A/B}\subseteq \widehat{A}$ in the usual way. Let  $M$ be the diameter of $(\widehat{A},\rho).$  Fix $e\in F=F^{-1}\subseteq  \Gamma$ finite, $\delta>0.$ Let $\eta>0$ in a manner to be determined later. By the preceding Lemma, there is an $m\in \NN,\delta_{0}>0,$ and $E\subseteq \Gamma$  finite so that if $\chi\in \widehat{A},$ and $|\chi(hb_{k})|<\delta_{0}$ for all $k\leq m,$ and $h\in E,$  then there is a $\widetidle{\chi}\in \widehat{A/B}$ so that
\[\sup_{g\in F}\rho(g\widetilde{\chi},g\chi)<\eta.\]

	Let $d$ be the dynamically generating continuous pseudometric on $\widehat{A}$ defined by
\[d(\chi_{1},\chi_{2})=\rho(\chi_{1},\chi_{2})+\left(\sum_{j=1}^{m}|\chi_{1}(b_{j})-\chi_{2}(b_{j})|^{2}\right)^{1/2}.\]
Fix $F',\delta'>0$ which will depend upon $\eta,F,m,E,\delta_{0}$ in a manner to be determined. Specifically, we will apply Lemma \ref{L:heart} and assume that
\[\Map(\rho,F',\delta',\sigma_{i})\subseteq \Map\left(d,E\cup \{e\}\cup E^{-1},\frac{\delta_{0}^{2015}}{(2015)!|E|m},\sigma_{i}\right),\]
and that $\delta'<\delta,F'\supseteq F.$ Let $\phi\in \Map(\rho|T,F',m,\delta',\sigma_{i}).$ We may find a  $W_{i}\subseteq \{1,\cdots,d_{i}\}$ of size at least $\left(1-4m(|E|+1)\left(\frac{\delta'}{\delta_{0}}\right)^{2}\right)d_{i}$  so that
\[|\phi(j)(b_{k})|<\frac{\delta_{0}}{2},\]
\[|\phi(j)(hb_{k})-\phi(\sigma_{i}(h^{-1})j)(b_{k})|<\frac{\delta_{0}}{2},\]
for all $j\in W_{i},k\leq m,h\in E.$

  Set $V_{i}=W_{i}\cap \bigcap_{h\in E}\sigma_{i}(h^{-1})^{-1}(W_{i}),$ then
\[|V_{i}^{c}|\leq d_{i}4m(|E|+1)^{2}\left(\frac{\delta'}{\delta_{0}}\right)^{2},\]
and
\[|\phi(j)(hb_{k})|<\delta_{0}\]
for all $j\in V_{i},h\in E,1\leq k\leq m.$ If $j\in V_{i},$ we can find a $\phi^{0,j}\in \widehat{A/B}$ so that
\[\rho(g\phi^{0,j},g\phi(j))<\eta\]
for all $g\in F.$ Define $\phi^{0}\colon \{1,\cdots,d_{i}\}\to \widehat{A/B}$ by
\[\phi^{0}(j)=\phi^{0,j}\]
if $j\in V_{i},$ and $\phi^{0}(j)=0$ otherwise. For all $g\in F,$ we have
\[\rho_{p}(g\phi^{0},g\phi)^{p}\leq \eta^{p}+M^{p}(|E|+1)^{2}4m\left(\frac{\delta'}{\delta_{0}}\right)^{2}.\]
Now choose $\eta$ to be any number less than $\delta.$ This forces $m,\delta_{0}$ and $E$ on us, but we then choose $\delta'$ to be sufficiently small so that
\[\max(\rho_{2}(g\phi^{0},g\phi),\rho_{p}(g\phi^{0},g\phi))<\max((\delta^{p}+\delta M^{p})^{1/p},(\delta^{2}+\delta M^{2})^{1/2}),\]
for all $g\in F.$ Set
\[\kappa(\delta)=\max((\delta^{p}+\delta M^{p})^{1/p},(\delta^{2}+\delta M^{2})^{1/2}).\]
Since $e\in F,$ we know that $\phi^{0}\in \Map(\rho\big|_{\widehat{A/B}},F,2\kappa(\delta)+\delta,\sigma_{i}),$ and thus
\[\Map(\rho|T,F',m,\delta',\sigma_{i})\subseteq_{\kappa(\delta),\rho_{p}}\Map(\rho\big|_{\widehat{A/B}},F,2\kappa(\delta)+\delta,\sigma_{i}).\]

	Since
\[\lim_{\delta\to 0}\kappa(\delta)=0\]
we conclude that
\[\mdim_{\Sigma,p}(\rho|T)\leq \mdim_{\Sigma,p}(\widehat{A/B},\rho\big|_{\widehat{A/B}}).\]
By our convention that $\widehat{A/B}\subseteq \widehat{A},$ it follows that
\[\Map(\rho\big|_{\widehat{A/B}},F,\delta,\sigma_{i})\subseteq \Map(\rho|T,F,m,\delta,\sigma_{i})\] for all $m,$ so the reverse inequality is easier.

\end{proof}

To apply to metric mean dimension we will need a Lemma on extending metrics, for which we make the following definition.

\begin{definition} \emph{Let $(X,d)$ be a metric space of finite diameter, and denoted by $\BC(X)$ the space of bounded continuous functions on $X.$ The} Kuratowski embedding \emph{is given as the map $x\mapsto d_{x}$ from $X$ to $\BC(X),$ where $d_{x}(y)=d(x,y).$ It is a standard exercise that this is an isometric embedding.}\end{definition}

\begin{lemma}\label{L:metricextension} Let $Y\subseteq X$ be compact metrizable spaces. Let $\rho$ be a compatible metric on $Y.$ Then there is a compatible metric $d$ on $X$ such that $d\big|_{Y\times Y}=\rho.$

\end{lemma}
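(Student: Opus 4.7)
The plan is to combine the Kuratowski embedding of $Y$ with a continuous extension theorem, and then add a pseudometric coming from the quotient $X/Y$ to restore injectivity off $Y$. All inputs are standard general topology, so the proposal is short.

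First I would apply the Kuratowski embedding $j\colon Y\to C(Y)$, $j(y)(z)=\rho(y,z)$. Since $Y$ is compact this lands in $C(Y)=\BC(Y)$, and by the remark in the definition above, it satisfies $\|j(y)-j(y')\|_\infty=\rho(y,y')$. The next step is to extend $j$ to a continuous map $J\colon X\to C(Y)$. Since $Y$ is a closed subset of the metric space $X$ and $C(Y)$ is a Banach space (in particular a locally convex topological vector space), Dugundji's extension theorem delivers such a $J$. Alternatively, $J$ can be built by hand: fix any compatible metric $d_0$ on $X$, take a locally finite open cover $\{U_\alpha\}$ of $X\setminus Y$ by balls with $\diam(U_\alpha)\leq \tfrac12 d_0(U_\alpha,Y)$, pick $y_\alpha\in Y$ with $d_0(U_\alpha,y_\alpha)\leq 2 d_0(U_\alpha,Y)$, and set $J(x)=\sum_\alpha \phi_\alpha(x)\,j(y_\alpha)$ off $Y$ via a subordinate partition of unity.

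The map $J$ preserves $\rho$ on $Y$ but need not be injective off $Y$, so I would compensate with an additional pseudometric. Let $\pi\colon X\to X/Y$ be the quotient map collapsing $Y$ to a single point; $X/Y$ is compact metrizable, and I pick any compatible metric $d_Q$ on it. Define
\[ d(x,x')=\|J(x)-J(x')\|_\infty+d_Q(\pi(x),\pi(x')). \]
On $Y\times Y$ the quotient term vanishes and $\|J(x)-J(x')\|_\infty=\rho(x,x')$, so $d\big|_{Y\times Y}=\rho$. Symmetry and the triangle inequality are immediate as $d$ is the sum of two (pseudo)metrics. For positive definiteness, if $x\neq x'$ are both in $Y$ then $d(x,x')=\rho(x,x')>0$; if at least one of $x,x'$ lies outside $Y$, then $\pi(x)\neq \pi(x')$ (as $\pi$ is injective on $X\setminus Y$ and sends $Y$ to a distinguished point), so the quotient term is strictly positive.

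Finally, $d$ is continuous on $X\times X$, so the identity $(X,\text{original topology})\to (X,d)$ is a continuous bijection from a compact space to a Hausdorff space, hence a homeomorphism; thus $d$ is compatible with the topology on $X$. The only non-routine ingredient is the construction of $J$, i.e.\ the existence of a continuous extension of a Banach-space-valued continuous map defined on a closed subset of a metric space; this is the main obstacle but is supplied verbatim by Dugundji's theorem (or the explicit partition-of-unity construction above).
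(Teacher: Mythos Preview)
Your argument is correct and follows the same overall scheme as the paper: extend the Kuratowski embedding $Y\hookrightarrow C(Y)$ to all of $X$, then add a second term that vanishes on $Y$ to restore injectivity. The differences are purely in the choice of tools. For the extension step you invoke Dugundji's theorem (Banach-space-valued), whereas the paper stays with the scalar Tietze theorem: it extends $\rho\colon Y\times Y\to[0,\infty)$ to a continuous $f\colon X\times Y\to[0,\infty)$ and then sets $\Phi(x)(y)=f(x,y)$, which is slightly more elementary. For the correction term you use a metric on the quotient $X/Y$, while the paper uses the map $\alpha(x)(y)=\Delta(x,\{y\}\cup Y)$ into $C(X)$ for some fixed compatible metric $\Delta$ on $X$; both vanish on $Y$ and separate points off $Y$. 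Your version tacitly uses that $X/Y$ is metrizable (true here since a Hausdorff quotient of a compact metric space is compact and second countable, hence metrizable by Urysohn), whereas the paper's $\alpha$ avoids any appeal to quotient-space metrizability. Either package works; the paper's is marginally more self-contained, yours is perhaps more conceptual.
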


\begin{proof} The basic idea of the proof is that a metric on a compact space $K$ is equivalent (via the Kuratowski embedding) to a continuous embedding $K\to V,$ with $V$ a Banach space. So we simply find a Banach space $V$ containing $C(Y),$ and an embedding $X\to V$ which extends the Kuratowski embedding on $Y.$

By the Tietze Extension Theorem, we may extend $\rho$ to a continuous function $f\colon X\times Y\to [0,\infty).$ Define a map
\[\Phi\colon X\to C(Y),\]
by $\Phi(x)(y)=f(x,y).$ Note that $\Phi$ extends the Kuratowski embedding of $Y$ into $C(Y)$, but may not be injective. To fix injectivity, we take a direct sum.

	Fix a compatible metric $\Delta$ on $X,$ and define
\[\alpha\colon X\to C(X)\]
by
\[\alpha(x)(y)=\Delta(x,\{y\}\cup Y).\]
Define
\[\Psi\colon X\to C(Y)\oplus C(X),\]
by
\[\Psi(x)=(\Phi(x),\alpha(x)).\]
Note that $\Psi$ is continuous. We claim that $\Psi$ is injective. Suppose that $a,b\in X$ and $\Psi(a)=\Psi(b).$ First suppose $a\in Y.$ Then $\alpha(b)(a)=\alpha(a)(a)=0,$ which implies that $b\in \{a\}\cup Y=Y.$ Since $\Phi$ extends the Kuratowski embedding and $\Phi(a)=\Phi(b),$ we conclude that $a=b.$ If $a\notin Y,$  then $\alpha(a)(b)=\alpha(b)(b)=0,$ so $a\in \{b\}\cup Y,$ so $a=b,$ since $a$ is not in $Y.$ Thus $\Psi$ is injective. Once we know that $\Psi$ is injective, the lemma is proved by defining
\[d(x,y)=\max(\|\Phi(x)-\Phi(y)\|_{\infty},\|\alpha(x)-\alpha(y)\|_{\infty}).\]
 Since $\Phi$ extends the Kuratowski embedding, it is not hard to see that $d$ extends $\rho.$

\end{proof}

\begin{cor}\label{C:relativereduce} Let $\Gamma$ be a countable discrete sofic group, and $\Sigma$ a sofic approximation. Let $B\subseteq \ZZ(\Gamma)^{\oplus n}$ be a $\ZZ(\Gamma)$-submodule. Then for any sequence $T$ in $B$ generating $B,$ and for all $1\leq p<\infty,$
\[\mdim_{\Sigma,M,p}((\ZZ(\Gamma)^{\oplus n}/B)^{\widehat{}},\Gamma)=\inf_{\rho}\mdim_{\Sigma,p}(\rho|T),\]
where the infimum is over all compatible metrics $\rho$ on $(\TT^{n})^{\Gamma}.$
\end{cor}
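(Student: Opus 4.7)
The plan is to chain together three previously established facts: Proposition \ref{P:relativemeandimension} to translate relative mean dimension into the ordinary $p$-mean dimension of $\widehat{A/B}$ with the restricted metric, the preceding unnumbered lemma that computes $\mdim_{\Sigma,M,p}$ as an infimum over compatible metrics, and Lemma \ref{L:metricextension} to freely pass between compatible metrics on $\widehat{A/B}$ and on the ambient space $(\TT^{n})^{\Gamma}$. Throughout, write $A=\ZZ(\Gamma)^{\oplus n}$, so that $\widehat{A}=(\TT^{n})^{\Gamma}$ and $\widehat{A/B}$ is a closed $\Gamma$-invariant subgroup of $\widehat{A}$.

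First, I would observe that every compatible metric $\rho$ on $(\TT^{n})^{\Gamma}$ is in particular a dynamically generating continuous pseudometric on $\widehat{A}$, so Proposition \ref{P:relativemeandimension} applies and gives
\[\mdim_{\Sigma,p}(\rho|T)=\mdim_{\Sigma,p}\bigl(\widehat{A/B},\rho\big|_{\widehat{A/B}}\bigr).\]
Taking the infimum over all compatible metrics $\rho$ on $(\TT^{n})^{\Gamma}$ yields
\[\inf_{\rho}\mdim_{\Sigma,p}(\rho|T)=\inf_{\rho}\mdim_{\Sigma,p}\bigl(\widehat{A/B},\rho\big|_{\widehat{A/B}}\bigr).\]

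Second, by the unnumbered lemma just before Proposition \ref{P:finitepack}, one has
\[\mdim_{\Sigma,M,p}(\widehat{A/B},\Gamma)=\inf_{\rho'}\mdim_{\Sigma,p}(\widehat{A/B},\rho'),\]
where $\rho'$ ranges over all compatible metrics on $\widehat{A/B}$. So the corollary reduces to showing
\[\inf_{\rho'}\mdim_{\Sigma,p}(\widehat{A/B},\rho')=\inf_{\rho}\mdim_{\Sigma,p}\bigl(\widehat{A/B},\rho\big|_{\widehat{A/B}}\bigr),\]
where on the right $\rho$ ranges over compatible metrics on $(\TT^{n})^{\Gamma}$. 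The $\geq$ direction is trivial, since the restriction of any compatible metric on $(\TT^{n})^{\Gamma}$ to the closed subset $\widehat{A/B}$ is a compatible metric. For the $\leq$ direction, apply Lemma \ref{L:metricextension} to the inclusion $\widehat{A/B}\subseteq (\TT^{n})^{\Gamma}$: every compatible metric on $\widehat{A/B}$ is the restriction of some compatible metric on the ambient compact metrizable space, which realizes every value on the left as some value on the right.

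Combining the three steps gives the desired equality. There is no substantive obstacle here: the content of the corollary is entirely contained in Proposition \ref{P:relativemeandimension}, and all that remains is the bookkeeping to identify the infima, for which Lemma \ref{L:metricextension} is exactly tailored. The only point to be slightly careful about is confirming that the hypotheses of Proposition \ref{P:relativemeandimension} are satisfied for arbitrary compatible metrics $\rho$ on $(\TT^{n})^{\Gamma}$ (i.e.\ that such metrics are dynamically generating continuous pseudometrics on $\widehat{A}$), which is immediate since a metric on $\widehat{A}$ separates points.
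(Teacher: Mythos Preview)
Your proposal is correct and is exactly the argument the paper intends: the paper's proof is the single line ``Use the preceding lemma and Proposition \ref{P:relativemeandimension},'' and you have simply unpacked this, invoking Proposition \ref{P:relativemeandimension} to identify $\mdim_{\Sigma,p}(\rho|T)$ with $\mdim_{\Sigma,p}(\widehat{A/B},\rho|_{\widehat{A/B}})$, the earlier lemma to rewrite $\mdim_{\Sigma,M,p}$ as an infimum over compatible metrics, and Lemma \ref{L:metricextension} to match the two infima. (Minor note: the lemma you cite is not literally the one ``just before'' Proposition \ref{P:finitepack}---Corollary \ref{C:mdimcomparison} intervenes---but your identification of its content is correct.)
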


\begin{proof} Use the preceding lemma and Proposition \ref{P:relativemeandimension}.\end{proof}

\section{Proof of The Main Theorem}\label{S:upperbound}

	We now prove the Main Theorem.

\begin{theorem}\label{T:main} Let $\Gamma$ be a countable discrete sofic group, and $A$ a finitely generated $\ZZ(\Gamma)$-module. Then for all $1\leq p\leq \infty,$
\[\mdim_{\Sigma,M,p}(\widehat{A},\Gamma)=\vr(A).\]
Further if $a_{1},\cdots,a_{n}$ generate $A,$ and
\[\rho(\chi_{1},\chi_{2})=\left(\sum_{j=1}^{n}|\chi_{1}(a_{j})-\chi_{2}(a_{j})|^{2}\right)^{1/2},\]
then
\[\mdim_{\Sigma,M}(\widehat{A},\rho)=\vr(A).\]
\end{theorem}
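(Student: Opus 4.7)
Write $A\cong\ZZ(\Gamma)^{\oplus n}/B$ with generating tuple $a_l=e\otimes e_l+B$, and fix a sequence $T=(b_j)_{j=1}^\infty$ generating $B$. For each $m$, let $f_m\in M_{m,n}(\ZZ(\Gamma))$ be the matrix whose rows are $\widetilde{b_1},\ldots,\widetilde{b_m}$; Lemma \ref{L:hilred} identifies $\vr(A)=\lim_{m\to\infty}\dim_{R(\Gamma)}(\ker\rho(f_m))$. By Corollary \ref{C:mdimcomparison},
\[\mdim_\Sigma(\widehat{A},\Gamma)\leq\mdim_{\Sigma,M,p}(\widehat{A},\Gamma)\leq\mdim_{\Sigma,M}(\widehat{A},\Gamma)\leq\mdim_{\Sigma,\infty}(\widehat{A},\rho)\]
for the specific pseudometric $\rho$ of the theorem, so both displayed equalities in the theorem reduce to proving
\[\mdim_{\Sigma,\infty}(\widehat{A},\rho)\leq\vr(A)\qquad\text{and}\qquad\mdim_\Sigma(\widehat{A},\Gamma)\geq\vr(A).\]

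For the upper bound, I encode any microstate $\phi\in\Map(\rho,F,\delta,\sigma_i)$ as an element $\xi\in\TT^{nd_i}$ by $\xi_{j,l}=\phi(j)(a_l)$. Since $\phi(j)\in\widehat{A}$ annihilates $B$ exactly, approximate equivariance (with $F$ large and $\delta$ small relative to $m$) forces $\sigma_i(f_m)\xi$ to be small in $\TT^{md_i}$. Any $\RR^{nd_i}$-lift of $\xi$ then lies within small $\ell^2$-distance of $\chi_{[0,\eta]}(|\sigma_i(f_m)|)\Ball(\ell^2_\RR(d_i,\mu_{d_i})^{\oplus n})$ modulo $\ZZ^{nd_i}$, and the von Neumann Dimension Lemma bounds the $\ell^2$-covering numbers of this spectral subspace by $\varepsilon^{-d_i(\dim_{R(\Gamma)}(\ker\rho(f_m))+o(1))}$. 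Sending $m\to\infty$ yields $\mdim_{\Sigma,1}(\widehat{A},\rho)\leq\mdim_{\Sigma,2}(\widehat{A},\rho)\leq\vr(A)$. Since $\rho$ factors through $\chi\mapsto(\chi(a_l))_l\in\TT^n$, the pseudometric space $(\widehat{A},\rho)$ has finite packing dimension, and Proposition \ref{P:finitepack} upgrades the $p=1$ bound to $\mdim_{\Sigma,\infty}(\widehat{A},\rho)\leq\vr(A)$.

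For the lower bound I use the topological microstates-rank equality $\tmicr(S\,|\,T)=\vr(A)$ from Proposition \ref{P:microstatesrank}, and show $\mdim_\Sigma(\widehat{A},\Gamma)\geq\tmicr(S\,|\,T)$. Given $\Phi\in\Hom_\Gamma(S\,|\,T,F,m,\delta,\sigma_i)$ whose values on generators lie in $[0,1/2]^{d_i}$, the mod-$\ZZ$ reduction $\phi(j)(ga_l):=\Phi(ga_l)(j)+\ZZ$ produces a map $\phi\colon\{1,\ldots,d_i\}\to(\TT^n)^\Gamma$ that inherits approximate equivariance from $\Phi$ and satisfies $\|\phi(\cdot)(b_k)\|_{\ell^2(d_i)}<\delta$. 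By Lemma \ref{L:smallquo}, on a set of $j$'s of density $1-o(1)$ as $\delta\to 0$ one can replace $\phi(j)$ by a nearby $\widetilde\phi(j)\in\widehat{A}$, producing $\widetilde\phi\in\Map(\rho',F',\delta',\sigma_i)$ for any dynamically generating continuous pseudometric $\rho'$ on $\widehat{A}$. The induced map $\Xi(S\,|\,T,F,m,\delta,\sigma_i)\cap[0,1/2]^{nd_i}\to\Map(\rho',F',\delta',\sigma_i)$ is close in $\ell^\infty$ to an isometric embedding, and Lemma \ref{L:subsetsofproducts} transfers the $\wdim_\varepsilon$-lower bound underlying $\tmicr(S\,|\,T)$ to the target, giving the required $\mdim_\Sigma(\widehat{A},\Gamma)\geq\vr(A)$.

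The principal obstacle is the perturbation step in the lower bound: the mod-$\ZZ$ reduction lands in $\widehat{\ZZ(\Gamma)^{\oplus n}}$ rather than $\widehat{A}$, and the passage to $\widetilde\phi$ must simultaneously preserve approximate equivariance, maintain the $\wdim_\varepsilon$-lower bound at the relevant scale, and tolerate uniform quantitative control over the exceptional indices $j$. The control on bad indices combines Markov's inequality applied to $\|\phi(\cdot)(b_k)\|_{\ell^2(d_i)}<\delta$ with the soficity of $\sigma_i$ to bound the density of $j$'s for which the replacement of Lemma \ref{L:smallquo} fails; it is this combination that ultimately transfers the $\tmicr$-rank lower bound to $\mdim_\Sigma$, and via Corollary \ref{C:mdimcomparison} to every $\mdim_{\Sigma,M,p}$ including $p=\infty$.
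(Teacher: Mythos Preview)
Your upper bound is essentially the paper's argument: you work directly with $\widehat A$-valued microstates, which automatically vanish on $B$, whereas the paper uses the relative microstate space $\Map(\Delta|T,\ldots)$, but either route works. The step you gloss over, ``modulo $\ZZ^{nd_i}$,'' is handled in the paper by counting the $C(k)^{kd_i}$ possible integer vectors $l$ with $\|\sigma_i(f_m)\xi_\phi-l\|_2$ small; this contributes a factor negligible after dividing by $d_i\log(1/\varepsilon)$.

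The lower bound, however, has a genuine gap---one you yourself flag as ``the principal obstacle'' but do not actually overcome. The replacement $\phi\mapsto\widetilde\phi$ furnished by Lemma~\ref{L:smallquo} comes from a compactness argument and is \emph{not continuous}; there is no analogue of inequality~(\ref{E:canonicalperturbationinequality}) for $\wdim_\varepsilon$, so being ``close in $\ell^\infty$ to an isometric embedding'' does not transfer a $\wdim_\varepsilon$ lower bound. Your appeal to Lemma~\ref{L:subsetsofproducts} here does not help either: that lemma bounds $\wdim$ \emph{above} by covering numbers, and is applied in the paper to the \emph{target} space, not to push a $\wdim$ bound forward through a discontinuous map. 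Consequently your argument does not establish $\mdim_\Sigma(\widehat A,\Gamma)\geq\vr(A)$---and indeed the paper does \emph{not} claim this inequality for general sofic $\Gamma$; it is proved only in the residually finite case (Section~6) and is posed as an open question in Section~\ref{S:?}. Your reduction therefore asks for more than the theorem asserts.

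The paper circumvents this by interposing the relative framework of Section~\ref{S:RMD}. The mod-$\ZZ$ reduction $\Psi$ lands directly in $\Map(\Delta|T,F,m,n\delta,\sigma_i)$ with no perturbation required, and $q\big|_{[0,1/2]^{nd_i}}$ is honestly $\|\cdot\|_\infty$-isometric, so $\wdim_\varepsilon$ transfers and one obtains $\mdim_{\Sigma,1}(d|T)\geq\tmicr(S|T)$. The perturbation via Lemma~\ref{L:smallquo} is then carried out \emph{at the level of covering numbers} inside the proof of Proposition~\ref{P:relativemeandimension}, where~(\ref{E:canonicalperturbationinequality}) is available, yielding $\mdim_{\Sigma,1}(d|T)=\mdim_{\Sigma,1}(\widehat A,d|_{\widehat A})$. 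The lower bound thus lands on $\mdim_{\Sigma,M,1}$, and Corollary~\ref{C:mdimcomparison} and Proposition~\ref{P:finitepack} then close the chain up to $p=\infty$.
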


\begin{proof} Let $B$ be the kernel of the $\ZZ(\Gamma)$-linear map
\[\ZZ(\Gamma)^{\oplus n}\to A\]
sending $e_{j}$ to $a_{j}.$  Let $T=(b_{j})_{j=1}^{\infty}$ be a sequence in $B$ so that $\Gamma T$ generates $B$ as an abelian group. Let $\Delta$ be the dynamically generating pseduometric on $(\TT^{n})^{\Gamma}$ given by
\[\Delta(\chi_{1},\chi_{2})=\|\chi_{1}(e)-\chi_{2}(e)\|_{\ell^{2}(n)}.\]
Using  Corollaries \ref{C:relativereduce},\ref{C:mdimcomparison} and Propositions \ref{P:finitepack},\ref{P:relativemeandimension}  it suffices to show that
\[\mdim_{\Sigma,2}(\Delta|T)\leq \vr(A),\]
and that for any compatible metric $d$ on $(\TT^{n})^{\Gamma},$
\[\mdim_{\Sigma,1}(d|T)\geq \vr(A).\]
We shall first prove the upper bound.

Fix $k\in \NN,$ and $\varepsilon>0,$ let $\kappa>0$ depending upon $\varespilon$ in a manner to be determined. Let $F\subseteq \Gamma$ be finite, $m\in \NN$ and $\delta>0,$ which will depend upon $k,\kappa$ in a manner to be determined later. Let $q\colon (\RR^{d_{i}})^{n}\to ((\RR/\ZZ)^{d_{i}})^{n}$ be the quotient map. Let $\phi\in \Map(\Delta|T,F,m,\delta,\sigma_{i}),$ define $\zeta_{\phi}\in (\TT^{d_{i}})^{n}$ by
\[\zeta_{\phi}(a)(j)=\phi(j)(e)(a),\]
(recall that $\phi$ is a map $\{1,\cdots,d_{i}\}\to (\TT^{n})^{\Gamma}$). Let $\xi_{\phi}\in ([-1,1]^{d_{i}})^{n}$ be such that
\[q(\xi_{\phi})=\zeta_{\phi}.\]
 Extend $\sigma_{i}$ to $\ZZ(\Gamma)$ by
\[\sigma_{i}(f)=\sum_{g\in \Gamma}\widehat{f}(g)\sigma_{i}(g),\]
and then to a map $\sigma_{i}\colon M_{p,r}(\ZZ(\Gamma))\to M_{p,r}(M_{d_{i}}(\ZZ))$ by $\sigma_{i}(f)_{st}=\sigma_{i}(f_{st}).$  Let $f\in M_{k,n}(\ZZ(\Gamma))$ be given by
\[f=\begin{bmatrix}
\widetilde{b_{1}}\\
\widetilde{b_{2}}\\
\vdots\\
\widetilde{b_{k}}
\end{bmatrix}.\]
By soficity,
\[\frac{1}{d_{i}}\left|\bigcap_{\substack{g\in\supp(\widehat{b}_{s}),\\ 1\leq s\leq k}}\{1\leq j\leq d_{i}:\sigma_{i}(g^{-1})(j)=\sigma_{i}(g)^{-1}(j)\}\right|\to_{i\to\infty}1.\]
Thus if $F,m$ are sufficiently large, and $\delta>0$ is sufficiently small, then for all large $i$ we have
\[\|q(\sigma_{i}(f)\xi_{\phi})\|_{2}^{2}<\kappa^{2}.\]

	We may find an $l\in (\ZZ^{d_{i}})^{k}$ such that
\begin{equation}\label{E:almostsmall}
\|\sigma_{i}(f)\xi_{\phi}-l\|_{2}<\kappa.
\end{equation}
We may bound $\|\sigma_{i}(f)\xi_{\phi}\|_{\infty}$ in terms of $\|\widehat{b_{j}}\|_{\infty},|\supp(\widehat{b_{j}})|.$ Because of this, there is an $N$ (depending on k)  such that we may choose $l$ as above with $l\in (\{-N,\cdots,N\}^{d_{i}})^{k}.$  Let $\Omega$ be the set of $l \in (\{-N,\cdots,N\}^{d_{i}})^{k}$ so that there exists $\phi\in \Map(\Delta|T,F,m,\delta,\sigma_{i})$ with
\[\|\sigma_{i}(f)\xi_{\phi}-l||_{2}<\kappa.\]
For each $l\in \Omega$ choose $\phi_{l}$ such that
\[\|\sigma_{i}(f)\xi_{\phi_{l}}-l\|_{2}<\kappa.\]
If $\xi_{\phi}$ satisfies (\ref{E:almostsmall}), then
\[\|\sigma_{i}(f)(\xi_{\phi}-\xi_{\phi_{l}})\|_{2}<2\kappa.\]
By functional calculus,
\begin{align*}
\|\chi_{[0,\sqrt{2\kappa}]}(|\sigma_{i}(f)|)(\xi_{\phi}-\xi_{\phi_{l}})-(\xi_{\phi}-\xi_{\phi_{l}})\|_{2}^{2}&=\ip{\chi_{(\sqrt{2\kappa},\infty)}(|\sigma_{i}(f)|)(\xi_{\phi}-\xi_{\phi_{l}}),\xi_{\phi}-\xi_{\phi_{l}}}\\
&\leq \frac{1}{2\kappa}\ip{|\sigma_{i}(f)|^{2}(\xi_{\phi}-\xi_{\phi_{l}}),\xi_{\phi}-\xi_{\phi_{l}}}\\
&=\frac{1}{2\kappa}\|\sigma_{i}(f)(\xi_{\phi}-\xi_{\phi_{l}})\|_{2}^{2}\\
&<2\kappa.
\end{align*}

	Hence, the set of all such $\xi_{\phi}$ is contained in the $\sqrt{2\kappa}$-neighborhood with respect to $\|\cdot\|_{\ell^{2}(n,\ell^{2}_{\RR}(d_{i},\mu_{d_{i}}))}$ of
\[\bigcup_{l\in \Omega}\xi_{\phi_{l}}+(2\sqrt{n})\chi_{[0,\sqrt{2\kappa}]}(|\sigma_{i}(f)|)\Ball(\ell^{2}_{\RR}(d_{i},\mu_{d_{i}})^{\oplus n}).\]
From this,
\[S_{2(\varepsilon+\sqrt{2\kappa})}(\Map(\Delta|T,F,m,\delta,\sigma_{i}),\Delta_{2})\leq S_{\frac{\varepsilon}{2\sqrt{n}}}(\chi_{[0,\sqrt{2\kappa}]}(|\sigma_{i}(f)|)(\Ball(\ell^{2}_{\RR}(d_{i},\mu_{d_{i}})^{\oplus n})),\|\cdot\|_{\ell^{2}(n,\ell^{2}_{\RR}(d_{i},u_{d_{i}})})C(k)^{kd_{i}},\]
where $C(k)>0$ depends only on $k.$ Now choose $\sqrt{2\kappa}<\varespilon,$ then
\[S_{4\varepsilon}(\Map(\Delta |T,F,m,\delta,\sigma_{i}),\Delta_{2})\leq S_{\frac{\varepsilon}{2\sqrt{n}}}(\chi_{[0,\sqrt{2\kappa}]}(|\sigma_{i}(f)|)(\Ball(\ell^{2}_{\RR}(d_{i},\mu_{d_{i}})^{\oplus n})),\|\cdot\|_{\ell^{2}(n,\ell^{2}_{\RR}(d_{i},u_{d_{i}})})C(k)^{kd_{i}}.\]
As $F,m$ can be made arbitrarily large, and $\delta>0$ arbitrarily small, we can apply the von Neumann dimension Lemma (Lemma \ref{L:vnDLemma})  to see that
\begin{equation}\label{E:mmdub1}\mdim_{\Sigma,2}(\Delta|T)\leq \dim_{R(\Gamma)}(\ker\rho(f)).\end{equation}
Here we urge the reader to read the words of caution before Lemma \ref{L:hilred}. Now,
\begin{equation}\label{E:mmdub2}
\ker(\rho(f))^{\perp}=\overline{\im\rho(f^{*})}.\end{equation}

  Let
\[\eta\colon \ZZ(\Gamma)\to \ell^{2}(\Gamma)\]
be the injection given by
\[\eta(\alpha)(g)=\widehat{\alpha}(g),\]
and let
\[\lambda\colon \Gamma\to \mathcal{U}(\ell^{2}(\Gamma))\]
be the left regular representation, and  extend to $\CC(\Gamma)$ in the obvious way. Also let
\[\lambda\otimes 1\colon \CC(\Gamma)\to B(\ell^{2}(\Gamma)^{\oplus n})\]
be given by
\[(\lambda\otimes 1)(\alpha)\begin{bmatrix}
\xi_{1}\\
\xi_{2}\\
\vdots\\
\xi_{n}\end{bmatrix}
=\begin{bmatrix}
\lambda(\alpha)\xi_{1}\\
\lambda(\alpha)\xi_{2}\\
\vdots\\
\lambda(\alpha)\xi_{n}
\end{bmatrix}.\]
Then for $f_{1},\cdots,f_{k}\in \CC(\Gamma),$ we have
\[\rho(f^{*})\begin{bmatrix}
\eta(f_{1})\\
\eta(f_{2})\\
\vdots\\
\eta(f_{k})
\end{bmatrix}=\sum_{j=1}^{k}(\lambda\otimes 1)(f_{j})\eta^{\oplus n}(b_{j}).\]
Hence as $k\to \infty,$
\[\dim_{R(\Gamma)}(\overline{\im\rho(f^{*})})\to \dim_{R(\Gamma)}(\mathcal{H}_{B})\]
with the notation as in Lemma \ref{L:hilred}. Combining (\ref{E:mmdub1}),(\ref{E:mmdub2}) now proves the upper bound by Lemma \ref{L:hilred}.

 To prove the Lower bound, let $T,B$ be as before and set $S=(e\otimes e_{1},e\otimes e_{2},\dots,e\otimes e_{n}) $ where $e\otimes e_{j}$ is $e$ in the $j^{th}$ coordinate and zero elsewhere. Let $d$ be a compatible metric on $(\TT^{n})^{\Gamma}.$    Let $\Theta$ be the pseudometric on $(\TT^{n})^{\Gamma}$ given by
\[\Theta(x,y)=\|x(e)-y(e)\|_{\ell^{\infty}(n)}.\] By Lemma \ref{L:subsetsofproducts} and Lemma \ref{L:heart} we have
\begin{align*}
&\mdim_{\Sigma,1}(d|T)\geq \sup_{\varepsilon>0}\inf_{F,m,\delta}\limsup_{i\to \infty}\frac{\wdim_{\varepsilon}(\Map(\Delta|T,F,m,\delta,\sigma_{i}),\Theta_{\infty})}{d_{i}}.
\end{align*}
We may define a map
\[\Psi\colon \Hom(\ZZ(\Gamma)^{\oplus n},\ell^{2}_{\RR}(d_{i},\mu_{d_{i}}))\to \Hom(\ZZ(\Gamma)^{\oplus n},\TT^{d_{i}}),\]
by
\[\Psi(\Phi)=q\circ \Phi.\]
As
\[\Hom(\ZZ(\Gamma)^{\oplus n},\TT^{d_{i}})\cong ((\TT^{n})^{\Gamma})^{d_{i}}\]
canonically,  we may regard $\Psi$ as a map $\Hom(\ZZ(\Gamma)^{\oplus n},\ell^{2}_{\RR}(d_{i},\mu_{d_{i}}))\to ((\TT^{n})^{\Gamma})^{d_{i}}.$ Via this identification,
\[\Psi(\Hom_{\Gamma}(S|T,F\cup\{e\}\cup F^{-1},m,\delta,\sigma_{i}))\subseteq \Map(\Delta|T,F,m,n\delta,\sigma_{i})\]
when $i$ is large enough.
Since
\[q\big|_{[0,1/2]^{nd_{i}}}\colon [0,1/2]^{nd_{i}}\to \TT^{nd_{i}}\]
is $\|\cdot\|_{\infty}-\Theta_{\infty}$ isometric, the above discussion implies that
\begin{align*}
\wdim_{\varepsilon}(\Xi(S|T,F\cup\{e\}\cup F^{-1},m,\delta,\sigma_{i})\cap[0,1/2]^{nd_{i}},\|\cdot\|_{\infty})&\leq \wdim_{\varepsilon}(\Map(\Delta|T,F,m,n\delta,\sigma_{i}),\Theta_{\infty}).\\
\end{align*}
 Hence,
\[\mdim_{\Sigma,1}(d|T)\geq \tmicr(S|T)=\vr(A),\]
by Proposition \ref{P:microstatesrank}.

\end{proof}

\section{Mean Dimension of Finitely Presented Modules}

In this section we calculate mean dimension, instead of metric mean dimension, of algebraic actions $\Gamma\actson \widehat{A}$ when $A$ is a finitely presented $\ZZ(\Gamma)$-module and $\Gamma$ is residually finite. We leave it to the reader to verify that if $\Gamma$ is a residually finite group, and $\Gamma_{k}$ is a decreasing sequence of normal, finite-index subgroups with
\[\bigcap_{k=1}^{\infty}\Gamma_{k}=\{e\},\]
then the sequence of maps
\[(\sigma_{k}\colon\Gamma\to \Sym(\Gamma/\Gamma_{k}))\]
given by
\[\sigma_{k}(g)(x\Gamma_{k})=gx\Gamma_{k}\]
is a sofic approximation.
	
		We remark that there is some difficulty in extending this calculation to the general sofic case. Let $f\in M_{m,n}(\ZZ(\Gamma))$ where $\Gamma$ is a countable discrete sofic group with sofic approximation $\sigma_{i}\colon\Gamma\to S_{d_{i}}.$ Set
	\[A=\ZZ(\Gamma)^{\oplus n}/\ZZ(\Gamma)^{\oplus m}f,\]
and view $\widehat{A}\subseteq (\TT^{\Gamma})^{\oplus n}.$ For $\xi\in (\TT^{d_{i}})^{n}$ we can define
\[\phi_{\xi}\colon \{1,\dots,d_{i}\}\to (\TT^{\Gamma})^{\oplus n}\]
by
\[\phi_{\xi}(j)(l)(g)=\xi(l)(\sigma_{i}(g)^{-1}(j)),\]
and this will be a good enough microstate for the action $\Gamma\actson (\TT^{\Gamma})^{\oplus n}$ if $i$ is sufficiently large. Suppose now that $\xi\in \ker(\sigma_{i}(f)),$ as explained in Section \ref{S:RMD}, this will imply that the uniform probability measure of the set of $j$ for which $\phi_{\xi}(j)$ is ``close'' to $\widehat{A}$ tends to $1.$ Theorem \ref{T:main} essentially shows that this is enough to show the desired relation between metric mean dimension and von Neumann-L\"{u}ck rank. This is because inequality (\ref{E:canonicalperturbationinequality}) shows that $S_{\varepsilon}(E,d)$ is well-behaved under passing to small neighborhoods of a set $E.$ Thus for the purposes of \emph{metric} mean dimension, it is acceptable to consider microstates for $\Gamma\actson (\TT^{\Gamma})^{\oplus n}$ which are ``close'' to $\widehat{A}.$ However, this sort of trick is unacceptable if we wish to compute mean dimension.  For example, there is no general inequality of the form  (\ref{E:canonicalperturbationinequality}) for $\wdim_{\varepsilon}(E,d)$ and so we cannot use this perturbation argument. This discussion reveals that mean dimension is actually somewhat of a ``singular'' object, being quite sensitive to perturbations, an undesirable feature which is absent for metric mean dimension. This is somewhat curious, as it is may be possible that metric mean dimension and mean dimension always agree. See Question 6.6 in \cite{Li}. It has been shown by Lindenstrauss that if $X$ is a compact metrizable space, if $\ZZ\actson X$ by homeomorphisms, and the action $\ZZ\actson X$ has a nontrivial minimal factor, then the mean dimension and the metric mean dimension of $\ZZ\actson X$ are the same ( see \cite{Lin} Theorem 4.3). Potential ways to get around this issue are  discussed more in Section \ref{S:?}.

	For now, let us comment that the only way we are aware to avoid this issue is to force $\phi_{\xi}$ as defined above to be literally in $\widehat{A},$ as opposed to merely ``close'' to $\widehat{A}.$ This is significantly simpler when $\Gamma$ is residually finite as we now show.  The following calculation has been done in the amenable (without assuming residually finite) case by Li-Liang ( see \cite{LiLiang} Theorem 1.1).

\begin{theorem} Let $\Gamma$ be a countable discrete residually finite group, and let $\Gamma_{k}$ be a decreasing sequence of finite-index, normal subgroups such that $\bigcap_{k=1}^{\infty}\Gamma_{k}=\{e\}.$ Let $\sigma_{k}\colon \Gamma\to \Sym(\Gamma/\Gamma_{k})$ be given by
\[\sigma_{k}(g)(x\Gamma_{k})=gx\Gamma_{k}.\]
Set $\Sigma=(\sigma_{k})_{k=1}^{\infty}.$  For any finitely presented $\ZZ(\Gamma)$-module $A,$
\[\mdim_{\Sigma}(\widehat{A},\Gamma)=\vr(A).\]
\end{theorem}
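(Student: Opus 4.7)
The upper bound $\mdim_{\Sigma}(\widehat{A},\Gamma) \leq \vr(A)$ is immediate from Corollary \ref{C:mdimcomparison} and Theorem \ref{T:main}:
\[ \mdim_{\Sigma}(\widehat{A},\Gamma) \leq \mdim_{\Sigma,M,1}(\widehat{A},\Gamma) = \vr(A). \]
So the entire work is the lower bound $\mdim_{\Sigma}(\widehat{A},\Gamma) \geq \vr(A)$. The whole point of restricting to the residually finite case with the canonical $\sigma_k$ is, as flagged in the discussion opening this section, to produce microstates that land in $\widehat{A}^{d_k}$ \emph{on the nose}, not merely close to it.

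Fix a finite presentation $A = \ZZ(\Gamma)^{\oplus n}/\ZZ(\Gamma)^{\oplus m} f$ with $f \in M_{m,n}(\ZZ(\Gamma))$. Because each $\sigma_k \colon \Gamma \to \Sym(\Gamma/\Gamma_k)$ is a genuine group homomorphism, the extension to $\sigma_k \colon M_{m,n}(\ZZ(\Gamma)) \to M_{m,n}(M_{d_k}(\ZZ))$ is multiplicative, and the resulting integer matrix descends to a group homomorphism $\sigma_k(f) \colon \TT^{d_k n} \to \TT^{d_k m}$. For $\xi \in \ker\sigma_k(f)$ define
\[ \phi_\xi \colon \{1,\dots,d_k\} \to (\TT^n)^\Gamma, \qquad \phi_\xi(j)(l)(g) = \xi(l)\bigl(\sigma_k(g)^{-1}(j)\bigr). \]
Using that $\sigma_k$ is a homomorphism (not just approximate), one checks $\phi_\xi \circ \sigma_k(g) = g\phi_\xi$ exactly; using $\sigma_k(f)\xi = 0$, one checks that each $\phi_\xi(j)$ annihilates $\ZZ(\Gamma)^{\oplus m}f$, so $\phi_\xi(j) \in \widehat{A}$. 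Therefore $\phi_\xi$ lies in $\Map(\rho,F,\delta,\sigma_k)$ for \emph{every} dynamically generating continuous pseudometric $\rho$ on $\widehat{A}$, every finite $F \subseteq \Gamma$, and every $\delta > 0$.

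I would then choose the compatible metric
\[ \rho(\chi_1,\chi_2) = \sum_{s \geq 1} 2^{-s}\|\chi_1(g_s^{-1}) - \chi_2(g_s^{-1})\|_{\ell^\infty(n)}, \]
where $\Gamma = \{g_1, g_2, \dots\}$ is enumerated with $g_1 = e$, so that $\rho \geq \tfrac12\|\chi_1(e)-\chi_2(e)\|_{\ell^\infty(n)}$. Under the parametrization $\xi \mapsto \phi_\xi$, this yields the Lipschitz lower bound $\rho_\infty(\phi_\xi,\phi_\eta) \geq \tfrac12 \|\xi-\eta\|_{\ell^\infty(d_k n)}$. Let $V_k^{\RR} = \ker\bigl(\sigma_k(f) \colon \RR^{d_k n} \to \RR^{d_k m}\bigr)$. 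The quotient $\RR^{d_k n} \to \TT^{d_k n}$ is $\|\cdot\|_\infty$-isometric on the cube $[-\tfrac14,\tfrac14]^{d_k n}$, and its intersection with $V_k^\RR$ injects into $\ker\sigma_k(f)$. Combining this with the Gromov--Tsukamoto lower bound for $\wdim_\varepsilon$ of a cube-intersected linear subspace (the same fact invoked in the proof of Proposition \ref{P:microstatesrank}) gives, for all sufficiently small $\varepsilon$,
\[ \wdim_\varepsilon\bigl(\Map(\rho,F,\delta,\sigma_k),\rho_\infty\bigr) \geq \dim_\RR V_k^{\RR}. \]
Dividing by $d_k$, taking $\limsup_k$, then $\inf_{F,\delta}$, then $\sup_{\varepsilon>0}$, and finally invoking L\"{u}ck's approximation theorem, which in this residually finite setting gives $\lim_k d_k^{-1}\dim_\RR V_k^\RR = \dim_{L(\Gamma)}\ker\lambda(f) = \vr(A)$ (the second equality by Lemma \ref{L:hilred}), yields $\mdim_{\Sigma}(\widehat{A},\Gamma) \geq \vr(A)$.

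The conceptual content, and the main obstacle, is entirely in the very first step: producing microstates $\phi_\xi$ literally in $\widehat{A}^{d_k}$. For a general sofic $\Gamma$ and a general sofic approximation, one can only arrange $\phi_\xi$ to be $\ell^2$-small on $\ZZ(\Gamma)^{\oplus m}f$; because $\wdim_\varepsilon$ is not well-behaved under the perturbation inequality (\ref{E:canonicalperturbationinequality}) (unlike $S_\varepsilon$, which is), this is insufficient for mean dimension. This is precisely the obstruction highlighted in the paragraphs preceding the theorem and the reason the residual finiteness hypothesis appears.
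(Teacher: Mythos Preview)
Your proof is correct and follows essentially the same route as the paper's: the upper bound via Theorem~\ref{T:main} (the paper cites Theorem 6.1 of \cite{Li} where you cite Corollary~\ref{C:mdimcomparison}, but these are the same inequality), and the lower bound by constructing exact microstates $\phi_\xi$ from $\ker\sigma_k(f)$, lifting to a real cube intersected with the linear kernel, applying the Tsukamoto $\wdim_\varepsilon$ bound, and finishing with L\"uck's approximation theorem. The only cosmetic differences are that you work directly with a compatible metric $\rho$ dominating the evaluation-at-identity pseudometric (the paper uses the pseudometric $\theta$ itself and leaves the passage to a metric implicit), and you use the cube $[-\tfrac14,\tfrac14]^{d_kn}$ where the paper uses $[0,\tfrac12]^{d_kn}$; one minor remark is that the identity $\vr(A)=\dim_{L(\Gamma)}\ker\lambda(f)$ is not the statement of Lemma~\ref{L:hilred} but is quoted inside its proof from \cite{LiLiang}, Lemma~5.4.
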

\begin{proof} Since
\[\mdim_{\Sigma}(\widehat{A},\Gamma)\leq \mdim_{\Sigma,M}(\widehat{A},\Gamma)\]
by Theorem 6.1 of \cite{Li}, it suffice by Theorem \ref{T:main} to show that
\[\mdim_{\Sigma}(\widehat{A},\Gamma)\geq \vr(A).\]
Since $A$ is finitely presented, without loss of generality we may assume that
\[A=\ZZ(\Gamma)^{\oplus n}/\ZZ(\Gamma)^{\oplus m}f\]
for some $f\in M_{m,n}(\ZZ(\Gamma)).$  By the argument of  Lemma 5.4 in \cite{LiLiang} and the remarks preceding Lemma 5.3 of \cite{LiLiang} we see that
\[\vr(A)=\dim_{L(\Gamma)}(\ker\lambda(f)).\]
Let $\pi_{k}\colon\Gamma\to\Gamma/\Gamma_{k}$ be the quotient map. Extend $\pi_{k}$ to a map $\CC(\Gamma)\to \CC(\Gamma/\Gamma_{k})$ by linearity. We further extend $\pi_{k}$ to a map
\[M_{m,n}(\CC(\Gamma))\to M_{m,n}(\CC(\Gamma/\Gamma_{k}))\]
by
\[\pi_{k}(A)_{ij}=\pi_{k}(A_{ij})\mbox{ for $1\leq i\leq m,1\leq j\leq n.$}\]
Then
\[\pi_{k}(M_{m,n}(\ZZ(\Gamma)))\subseteq M_{m,n}(\ZZ(\Gamma/\Gamma_{k})).\]
We view $\CC(\Gamma/\Gamma_{k})\subseteq M_{\Gamma/\Gamma_{k}}(\CC)$ by the left regular representation. We leave it is an exercise to check that under this identification we have
\[\sigma_{k}(A)=\pi_{k}(A)\]
for all $A\in M_{m,n}(\CC(\Gamma)).$ We may then view
\[X_{f}\subseteq(\TT^{\Gamma})^{n}.\]
We will use the dynamically generating pseudometric  $\theta$ on $(\TT^{\Gamma})^{\oplus n}$ given by
\[\theta(\xi,\zeta)=\max_{1\leq l\leq n}|\xi(l)(e)-\zeta(l)(e)|.\]
We also use $\rho$ for the metric on $\RR^{s}$ given by
\[\rho(x,y)=\|x-y\|_{\infty}.\]
We will suppress notation and not include the dependence on $s.$ Let $q\colon \RR^{s}\to \TT^{s}$ be the canonical quotient map. Again this depends upon $s,$ but we will suppress notation as what value of $s$ we will be referring to will be clear from context.

	Since $\pi_{k}(f)\in M_{m,n}(\ZZ(\Gamma/\Gamma_{k}))$ we can view it acting on $(\RR^{\Gamma/\Gamma_{k}})^{n}$ as well as $(\TT^{\Gamma/\Gamma_{k}})^{n}.$ Let
\[\xi \in\ker(\pi_{k}(f))\cap \ell^{2}_{\RR}(\Gamma/\Gamma_{k},u_{\Gamma/\Gamma_{k}})^{\oplus n}.\]
Define
\[\phi_{\xi}\colon \Gamma/\Gamma_{k}\to (\TT^{\Gamma})^{\oplus n}\]
by
\[\phi_{\xi}(x)(l)(g)=\xi(l)(\pi_{k}(g)^{-1}x)+\ZZ.\]
A direct computation shows that for all $\alpha\in \ZZ(\Gamma)^{\oplus n}$ we have
\[\ip{\phi_{\xi}(x),\alpha}=(\pi_{k}(\alpha)\xi)(x)+\ZZ.\]
Thus for all $\beta\in\ZZ(\Gamma)^{\oplus m}$ we have
\[\ip{\phi_{\xi}(x),\beta f}=(\pi_{k}(\beta)\pi_{k}(f)\xi)(x)+\ZZ\]
so that $\phi_{\xi}(x)\in X_{f}.$ Furthermore, we have
\[\phi_{\xi}(\pi_{k}(g)(x))=g\phi_{\xi}(x).\]
So $\phi_{\xi}\in \Map(\theta,F,\delta,\sigma_{k})$ for all finite $F\subseteq\Gamma$ and $\delta>0.$ Since $q\big|_{([0,1/2]^{\Gamma/\Gamma_{k}})^{\oplus n}}$ is $\rho_{\infty}-\theta_{\infty}$ isometric, we find that
\[\wdim_{\varepsilon}(\Map(\theta,F,\delta,\pi_{k}),\theta_{\infty})\geq \wdim_{\varepsilon}(([0,1/2]^{\Gamma/\Gamma_{k}})^{\oplus n}\cap\ker(\pi_{k}(f)),\rho_{\infty}).\]
If $\varepsilon<1/4,$ then by Appendix A in \cite{Tsuka}, we know that
\[\wdim_{\varespilon}(([0,1/2]^{\Gamma/\Gamma_{k}})^{\oplus n}\cap \ker(\pi_{k}(f)),\rho_{\infty})\geq \dim_{\RR}(\ker(\pi_{k}(f)\cap (\RR^{\Gamma/\Gamma_{k}})^{\oplus n})).\]
Thus
\[\mdim_{\Sigma}(\widehat{A},\Gamma)\geq \lim_{k\to \infty}\frac{\dim_{\RR}(\ker\pi_{k}(f)\cap (\RR^{\Gamma/\Gamma_{k}})^{\oplus n})}{[\Gamma\colon\Gamma_{k}]}=\dim_{L(\Gamma)}(\ker \lambda(f)),\]
by \cite{Luck2} Theorem 2.3.2.

\end{proof}

\section{Failure of Addition Formula And Other Applications}

	It is known that if $\Gamma$ is amenable, and  we have a $\Gamma$-equivariant exact sequence of compact abelian groups
\[\begin{CD}
0 @>>> K_{1} @>>> K_{2} @>>> K_{3} @>>> 0
\end{CD}\]
then the entropy of the action on $K_{2}$ is the sum of the entropies of $K_{3}$ and $K_{1}.$ This was first proved by Yuzvinski\v\i\ in \cite{Yuz} for $\Gamma=\ZZ,$ and by Lind-Schmidt-Ward in \cite{LindSchmidt2} for $\Gamma=\ZZ^{d}.$ It was proved in the above generality by Li in \cite{Li2}. Additionally Li-Liang in \cite{LiLiang} prove an analogue for mean dimension for algebraic actions of amenable groups. Here we note that this fails for mean dimension of nonamenable groups.  Let $\FF_{n}$ denote the free group on $n$ letters. By Corollary 10.3.7 (iv) in \cite{Passman}, we can find a (left) $\ZZ(\FF_{n})$-submodule $M$ of $\ZZ(\FF_{n})$ with $M\cong \ZZ(\FF_{n})^{\oplus n}$ as a $\ZZ(\FF_{n})$-submodule. We then have an exact sequence of $\ZZ(\FF_{n})$-modules
\[\begin{CD}
0 @>>> \ZZ(\FF_{n})^{\oplus n} @>>> \ZZ(\FF_{n}) @>>> \ZZ(\FF_{n})/M @>>> 0
\end{CD}.\]
By Pontryagin duality, this induces a $\Gamma$-equivariant exact sequence of compact abelian groups
\[\begin{CD}
0 @>>>  (\ZZ(\FF_{n})/M)^{\widehat{}}  @>>> \TT^{\FF_{n}} @>>> (\TT^{\FF_{n}})^{\oplus n}@>>> 0
\end{CD}.\]
By Theorem \ref{T:main},
\[\mdim_{\Sigma,M}( \TT^{\FF_{n}},\FF_{n})=\vr(\ZZ(\FF_{n}))=1\]
\[\mdim_{\Sigma,M}((\TT^{\FF_{n}})^{\oplus n},\FF_{n})=\vr(\ZZ(\FF_{n})^{\oplus n})=n.\]
Because of this, we make the following definition.

\begin{definition}\emph{ Let $\Gamma$ be a countable discrete group with sofic approximation $\Sigma.$ We say that $(\Gamma,\Sigma)$ is} metric mean dimension additive for the class of finitely generated modules \emph{if for every exact sequence}
\[\begin{CD}
0 @>>> A @>>> B @>>> C @>>> 0
\end{CD}\]
\emph{of finitely generated $\ZZ(\Gamma)$-modules we have}
\[\mdim_{\Sigma,M}(\widehat{B},\Gamma)=\mdim_{\Sigma,M}(\widehat{A},\Gamma)+\mdim_{\Sigma,M}(\widehat{C},\Gamma).\]
\end{definition}

\begin{proposition} Let $\Gamma$ be a countable discrete sofic group with sofic approximation $\Sigma,$ suppose that $\Lambda$ is a subgroup of $\Gamma.$ If $(\Lambda,\Sigma\big|_{\Lambda})$ is not metric mean dimension additive for the class of finitely generated modules, then neither is $(\Gamma,\Sigma).$ In particular, if $\Gamma$ contains a nonabelian free subgroup, then $(\Gamma,\Sigma)$ is not metric mean dimension additive for the class of finitely generated modules.
\end{proposition}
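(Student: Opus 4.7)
The plan is to prove the contrapositive: assuming $(\Gamma,\Sigma)$ is metric mean dimension additive for the class of finitely generated modules, I will show $(\Lambda,\Sigma|_{\Lambda})$ is as well. (Note that $\Sigma|_{\Lambda}$ is automatically a sofic approximation of $\Lambda$, since both sofic conditions for $\Gamma$ restrict to the analogous conditions for elements of $\Lambda$.) The key tool is the induction functor $\mathrm{Ind}_{\Lambda}^{\Gamma}M:=\ZZ(\Gamma)\otimes_{\ZZ(\Lambda)}M$. Fixing a system of left coset representatives for $\Gamma/\Lambda$ realizes $\ZZ(\Gamma)$ as a free right $\ZZ(\Lambda)$-module, so this functor is exact and carries finitely generated modules to finitely generated modules.

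Given an exact sequence $0\to A\to B\to C\to 0$ of finitely generated $\ZZ(\Lambda)$-modules, I would first induce to obtain an exact sequence of finitely generated $\ZZ(\Gamma)$-modules, then invoke the hypothesized additivity for $(\Gamma,\Sigma)$ together with Theorem \ref{T:main} to conclude
\[\vr_{\Gamma}(\mathrm{Ind}B)=\vr_{\Gamma}(\mathrm{Ind}A)+\vr_{\Gamma}(\mathrm{Ind}C).\]
The crucial step is to identify $\vr_{\Gamma}(\mathrm{Ind}_{\Lambda}^{\Gamma}M)$ with $\vr_{\Lambda}(M)$ for every finitely generated $\ZZ(\Lambda)$-module $M$; once this is done, a final application of Theorem \ref{T:main} for $\Lambda$ with sofic approximation $\Sigma|_{\Lambda}$ converts the displayed equation back into the additivity relation for the original $\Lambda$-sequence, as desired.

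To establish $\vr_{\Gamma}(\mathrm{Ind}_{\Lambda}^{\Gamma}M)=\vr_{\Lambda}(M)$, I would present $M=\ZZ(\Lambda)^{\oplus n}/B$ with $B$ generated by $f_{1},\dots,f_{m}\in \ZZ(\Lambda)^{\oplus n}$. Through the inclusion $\ZZ(\Lambda)^{\oplus n}\subseteq \ZZ(\Gamma)^{\oplus n}$, the same vectors present $\mathrm{Ind}M$ as $\ZZ(\Gamma)^{\oplus n}/B'$, where $B'$ is the $\ZZ(\Gamma)$-submodule they generate. Letting $f\in M_{m,n}(\ZZ(\Lambda))\subseteq M_{m,n}(\ZZ(\Gamma))$ be the associated matrix, Lemma \ref{L:hilred} (and its proof) expresses both $\vr_{\Lambda}(M)$ and $\vr_{\Gamma}(\mathrm{Ind}M)$ as $n-\dim_{R(\cdot)}(\overline{\im\rho(f^{*})})$ for the respective group von Neumann algebras. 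Since the trace $\tau_{R(\Gamma)}$ restricts to $\tau_{R(\Lambda)}$ on $R(\Lambda)\subseteq R(\Gamma)$ (both send $\rho(\lambda)$ to the Kronecker symbol $\delta_{\lambda,e}$ for $\lambda\in\Lambda$), all moments $\tau(|\rho(f^{*})|^{2k})$ agree in the two settings, so the spectral measures of $|\rho_{\Lambda}(f^{*})|$ and $|\rho_{\Gamma}(f^{*})|$ coincide; in particular the atoms at $\{0\}$ match, yielding the required equality of dimensions. This moment-matching comparison is the one substantive step; the remainder of the argument is purely formal manipulation of Theorem \ref{T:main} and exactness of induction.

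The \emph{in particular} assertion then follows immediately. The discussion preceding the proposition exhibits an exact sequence $0\to \ZZ(\FF_{n})^{\oplus n}\to \ZZ(\FF_{n})\to \ZZ(\FF_{n})/M\to 0$ for each $n\ge 2$, and Theorem \ref{T:main} forces the additivity formula to demand a strictly negative metric mean dimension for $(\ZZ(\FF_{n})/M)^{\widehat{}}$; so $(\FF_{n},\Sigma|_{\FF_{n}})$ is not additive for $n\ge 2$. Since any group containing a nonabelian free subgroup contains a copy of $\FF_{2}$, the main part of the proposition applied to $\Lambda=\FF_{2}$ completes the argument.
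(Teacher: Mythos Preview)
Your approach is essentially the same as the paper's: both pass through induction $\mathrm{Ind}_{\Lambda}^{\Gamma}=\ZZ(\Gamma)\otimes_{\ZZ(\Lambda)}(-)$, use that it is exact and preserves finite generation (since $\ZZ(\Gamma)$ is free over $\ZZ(\Lambda)$), establish $\vr_{\Gamma}(\mathrm{Ind}\,M)=\vr_{\Lambda}(M)$, and then invoke Theorem~\ref{T:main}. The only real difference is in how the rank identity is proved. The paper simply rewrites
\[L(\Gamma)\otimes_{\ZZ(\Gamma)}A_{j}'\cong L(\Gamma)\otimes_{L(\Lambda)}\bigl(L(\Lambda)\otimes_{\ZZ(\Lambda)}A_{j}\bigr)\]
and cites L\"uck's induction theorem (\cite{Luck}, Theorem~6.29(2)). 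Your moment-matching argument via Lemma~\ref{L:hilred} and trace-compatibility of $R(\Lambda)\subseteq R(\Gamma)$ is a correct and more self-contained alternative; it essentially reproves the special case of L\"uck's result that is needed here.

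One small gap: when you write ``$B$ generated by $f_{1},\dots,f_{m}$'' you are assuming $M$ is finitely \emph{presented}, not merely finitely generated, and $\ZZ(\Lambda)$ is not Noetherian in general. This is easily repaired: write $B=\bigcup_{k}B_{k}$ as an increasing union of finitely generated submodules, apply your moment argument to each $B_{k}$, and pass to the limit using the continuity of von Neumann dimension under increasing unions (exactly as in the proof of Lemma~\ref{L:hilred}). With that adjustment your argument is complete.
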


\begin{proof}

	Let
\[\begin{CD}
0 @>>> A_{1} @>>> A_{2} @>>> A_{3} @>>> 0,
\end{CD}\]
be an exact sequence of finitely generated $\ZZ(\Lambda)$-modules with
\[\mdim_{\Sigma,M}(\widehat{A_{2}},\Lambda)\ne \mdim_{\Sigma,M}(\widehat{A_{1}},\Lambda)+\mdim_{\Sigma,M}(\widehat{A_{3}},\Lambda).\]
For $j=1,2,3$ set
\[A_{j}'=\ZZ(\Gamma)\otimes_{\ZZ(\Lambda)}A_{j}.\]
Then,
\[\L(\Gamma)\otimes_{\ZZ(\Gamma)}(\ZZ(\Gamma)\otimes_{\ZZ(\Lambda)}A_{j})\cong \L(\Gamma)\otimes_{\ZZ(\Lambda)}A_{j}\cong L(\Gamma)\otimes_{L(\Lambda)}(L(\Lambda)\otimes_{\ZZ(\Lambda)}A_{j}).\]
Hence by \cite{Luck}, Theorem 6.29 (2) we have
\[\vr(A_{j}')=\vr(A_{j}),\]
the proposition now follows from Theorem \ref{T:main}.

\end{proof}
As a consequence of Theorem \ref{T:main}, Conjecture 6.48 in \cite{Luck} predicts that every nonamenable sofic group is not metric mean dimension additive for the class of finitely generated modules (for any sofic approximation). I am grateful to Andreas Thom for pointing this out to me.

	The remaining applications are as in \cite{LiLiang}. For example, if one knows some version of the Atiyah conjecture for $\Gamma,$ then there are nice restrictions on the values of metric mean dimension. For this, one must note that if
\[A=\ZZ(\Gamma)^{\oplus n}/B\]
and we write $B$ as a union of finitely generated submodules $B_{k},$ then
\[\vr(A)=\lim_{k\to \infty}\vr(\ZZ(\Gamma)^{\oplus n}/B_{k}).\]
This was implicitly proved in Lemma \ref{L:hilred}. One can use this to show that if $\Gamma$ is a sofic group satisfying the strong Atiyah conjecture and with a bound on the order of finite subgroups, then for a finitely generated $\ZZ(\Gamma)$ module $A,$ the topological entropy of $\Gamma\actson\widehat{A}$ is finite if and only if the action has zero metric mean dimension. This requires extending previously known results on topological entropy from amenable groups to sofic groups, see  \cite{Me5} Theorem 5.3.
	
\section{Questions and Conjectures}\label{S:?}

		The main question is the following.

\begin{?}\emph{Is it possible to improve on the techniques of this proof to show in fact that $\mdim_{\Sigma}(\widehat{A},\Gamma)=\vr(A)$ for all sofic groups $\Gamma$ and finitely generated $\ZZ(\Gamma)$-modules $A?$} \end{?}

	Here is the main difficulty. Lemma $\ref{L:smallquo}$ allows us to show
\[N=\{\chi\in (\TT^{n})^{\Gamma}:|\chi(b_{j})|<\delta\mbox{ for all $1\leq j\leq k$}\}\subseteq_{\varepsilon,\rho}\widehat{A},\]
for any continuous pseudometric $\rho$ on $(\TT^{n})^{\Gamma},$ and with $\delta$ small enough depending on $\varepsilon,\rho.$
However, for the proof of this fact we  use a compactness argument in an essential way. Thus we do not produce a \emph{continuous} map
\[f\colon N\to \widehat{A},\]
so that $\rho(f(x),x)<\varespilon.$
	
	If we could find such a map, then one could solve the above question in the affirmative. For this, let us borrow some intuition from differential geometry. Let $M$ be a compact Riemannian manifold, and $N\subseteq M$ a submanifold. Then it is known that one can find a ``tubular neighborhood" for the inclusion $N\subseteq M.$ That is, if $\delta>0$ is sufficiently small, then there is a continuous map from the $\delta$-neighborhood of $N$ back to $N,$ which maps a point to a closest point in $N$ (this is of course a reflection of the exact Hilbert space structure of the tangent space). Now, one can try to think of $\widehat{A}$ as an infinite dimensional submanifold of $(\TT^{n})^{\Gamma}$ and one could hope that the same fact should hold.  It does not seem very clear how to make the intuition precise.

\begin{?} \emph{Can one remove the finite generation assumption in all of our results?}\end{?}

The notion of $p$-metric mean dimension develops naturally from our proof. By the nature of von Neumann dimension one is naturally led to use Hilbert space techniques, and this  naturally leads to $p$-metric mean dimension, at least for $p=2.$ A natural question is:
\begin{?}\emph{Let $\Gamma$ be a sofic group with sofic approximation $\Sigma,$ and $1\leq p<\infty.$ Is it true that for all compact metrizable spaces and $\Gamma\actson X,$ we have $\mdim_{\Sigma,M,p}(X,\Gamma)=\mdim_{\Sigma,M}(X,\Gamma)?$}\end{?}

	A negative answer to the above would show that mean dimension is not the same as metric mean dimension. On the other hand, if this question were true one could use $\rho_{\infty}$ and $\rho_{2}$ interchangeably. Since our microstates are defined by being $\rho_{2}$ almost equivariant, this could simplify many arguments, as it can be unnatural to use a uniform and probabilistic notion of closeness at the same time. Thus either answer to the above question would be worthwhile.

\end{document}